\newtheorem{theorem}{Theorem}
\newtheorem{proposition}[theorem]{Proposition}
\newtheorem{lemma}[theorem]{Lemma}
\newtheorem{corollary}[theorem]{Corollary}
\theoremstyle{definition}
\newtheorem{definition}{Definition}
\DeclarePairedDelimiter{\group}{(}{)}
\DeclarePairedDelimiter{\sqgroup}{[}{]}
\DeclarePairedDelimiter{\set}{\{}{\}}
\DeclarePairedDelimiter{\abs}{\vert}{\vert}
\newcommand{\pths}{\Omega}
\newcommand{\event}{A}
\newcommand{\interval}{\sqgroup{\underline{p},\overline{p}}}
\newcommand{\intervalpq}{\sqgroup{p,q}}
\newcommand{\intervals}{\mathscr{I}}
\newcommand{\sits}{\mathbb{S}}
\newcommand{\frcstsystem}{\varphi}
\newcommand{\frcstsystempq}{\frcstsystem_{p,q}^\varpi}
\newcommand{\lfrcstsystem}{\underline{\frcstsystem}}
\newcommand{\ufrcstsystem}{\overline{\frcstsystem}}
\newcommand{\frcstsystems}{\Phi}
\newcommand{\processgeneral}{F}
\newcommand{\process}{F}
\newcommand{\diffprocess}{\adddelta F}
\newcommand{\selection}{S}
\newcommand{\selections}{\mathscr{S}}
\newcommand{\selectionsrec}{\mathscr{S}}
\newcommand{\multprocess}{D}
\newcommand{\mint}[1][\multprocess]{#1^{\raisebox{0.5pt}{$\scriptstyle \circledcirc$}}}
\newcommand{\supermartin}{M}
\newcommand{\realgrowth}{\tau}
\newcommand{\submartin}{M}
\newcommand{\submartins}[1][\frcstsystem]{\underline{\mathbb{M}}(#1)}
\newcommand{\supermartins}[1][\frcstsystem]{\overline{\mathbb{M}}(#1)}
\newcommand{\testsupermartins}[1][\frcstsystem]{\overline{\mathbb{T}}(#1)}
\newcommand{\testsupermartinsrandomopen}[1]{\overline{\mathbb{T}}_\random(#1)}
\newcommand{\test}{T}
\newcommand{\naturals}{\mathbb{N}}
\newcommand{\naturalswithzero}{{\mathbb{N}_0}}
\newcommand{\reals}{\mathbb{R}}
\newcommand{\posreals}{\mathbb{R}_{>0}}
\newcommand{\nonnegreals}{\mathbb{R}_{\geq0}}
\newcommand{\rationals}{\mathbb{Q}}
\newcommand{\ex}{E}
\newcommand{\lex}{\underline{\ex}}
\newcommand{\uex}{\overline{\ex}}
\newcommand{\pr}{P}
\newcommand{\lpr}{\underline{\pr}}
\newcommand{\upr}{\overline{\pr}}
\newcommand{\lexglobal}[1][\frcstsystem]{\lex^{#1}}
\newcommand{\uexglobal}[1][\frcstsystem]{\uex^{#1}}
\newcommand{\lprglobal}[1][\frcstsystem]{\lpr^{#1}}
\newcommand{\uprglobal}[1][\frcstsystem]{\upr^{#1}}
\newcommand{\gambleinterval}[1][]{\sqgroup{\underline{\gamble},\overline{\gamble}}}
\newcommand{\init}{\square}
\newcommand{\pth}{\omega}
\newcommand{\varpth}{\varpi}
\newcommand{\pthat}[1]{\pth_{#1}}
\newcommand{\pthatn}{\pth_{n}}
\newcommand{\pthatnplus}{\pth_{n+1}}
\newcommand{\pthatkplus}{\pth_{k+1}}
\newcommand{\pthto}[1]{\pth_{1:#1}}
\newcommand{\pthton}{\pth_{1:n}}
\newcommand{\varpthatnplus}{\varpi_{n+1}}
\newcommand{\varpthatkplus}{\varpi_{k+1}}
\newcommand{\pthtok}{\pth_{1:k}}
\newcommand{\sit}{s}
\newcommand{\xval}[1][]{x_{#1}}
\newcommand{\xvaltolong}[1][n]{\xval[1],\dots,\xval[#1]}
\newcommand{\xvalto}[1]{\xval[1:#1]}
\newcommand{\xvalton}{\xval[1:n]}
\newcommand{\xvaltonn}{\xval[1:n+1]}
\newcommand{\xvaltok}{\xval[1:k]}
\newcommand{\xvaltom}{\xval[1:m]}
\newcommand{\xvalatkplus}{\xval[k+1]}
\newcommand{\xvalatn}{\xval[n]}
\newcommand{\xvalatnplus}{\xval[n+1]}
\newcommand{\gamble}{f}
\newcommand{\gambleglobal}{f}
\newcommand{\gamblesglobal}{\mathscr{L}(\pths)}
\newcommand{\gambles}{\mathscr{L}(\set{0,1})}
\newcommand{\comp}{computable}
\newcommand{\lscomp}{lower semicomputable}
\newcommand{\uscomp}{upper semicomputable}
\newcommand{\compy}{computability}
\newcommand{\ind}[1]{\mathbb{I}_{#1}}
\newcommand{\adddelta}{\Delta}
\newcommand{\cset}[3][]{\set[#1]{#2\colon#3}}
\newcommand{\random}{\textnormal{R}}
\newcommand{\ch}{\textnormal{CH}}
\newcommand{\wch}{\textnormal{wCH}}
\newcommand{\co}{\textnormal{C}}
\newcommand{\s}{\textnormal{S}}
\newcommand{\ml}{\textnormal{ML}}
\newcommand{\wml}{\textnormal{wML}}
\newcommand{\realordering}{\tau}
\renewcommand{\cset}[3][]{\set[#1]{#2\colon#3}}
\newcommand{\mltests}[1]{\overline{\mathbb{T}}_\ml(#1)}
\newcommand{\weakmltests}[1]{\overline{\mathbb{T}}_\wml(#1)}
\newcommand{\comptests}[1]{\overline{\mathbb{T}}_\co(#1)}
\newcommand{\schnorrtests}[1]{\overline{\mathbb{T}}_\s(#1)}
\newcommand{\randomtests}[1]{\overline{\mathbb{T}}_\random(#1)}
\newcommand{\revised}[1]{{\color{black}#1}}
\newcommand{\revisedit}[1]{{\color{black}#1}}
\begin{document}
%% Title, authors and addresses
%% use the tnoteref command within \title for footnotes;
%% use the tnotetext command for theassociated footnote;
%% use the fnref command within \author or \address for footnotes;
%% use the fntext command for theassociated footnote;
%% use the corref command within \author for corresponding author footnotes;
%% use the cortext command for theassociated footnote;
%% use the ead command for the email address,
%% and the form \ead[url] for the home page:
%% \title{Title\tnoteref{label1}}
%% \tnotetext[label1]{}
%% \author{Name\corref{cor1}\fnref{label2}}
%% \ead{email address}
%% \ead[url]{home page}
%% \fntext[label2]{}
%% \cortext[cor1]{}
%% \affiliation{organization={},
%%             addressline={},
%%             city={},
%%             postcode={},
%%             state={},
%%             country={}}
%% \fntext[label3]{}

\begin{center}

\Large{On the (dis)similarities between stationary imprecise and non-stationary precise uncertainty models in algorithmic randomness}
\\ \vspace{10pt}
\normalsize{Floris Persiau, Jasper De Bock, Gert de Cooman}
\\
\textit{\footnotesize{Ghent University, Foundations Lab for imprecise probabilities, Technologiepark--Zwijnaarde 125, 9052 Zwijnaarde, Belgium}}
\end{center}

\author{Floris Persiau \and Jasper De Bock \and Gert de Cooman}
\date{}

%% use optional labels to link authors explicitly to addresses:
%% \author[label1,label2]{}
%% \affiliation[label1]{organization={},
%%             addressline={},
%%             city={},
%%             postcode={},
%%             state={},
%%             country={}}
%%
%% \affiliation[label2]{organization={},
%%             addressline={},
%%             city={},
%%             postcode={},
%%             state={},
%%             country={}}

\small \vspace{10pt}
\begin{minipage}{0.85\textwidth}
\textbf{Abstract}\vspace{5pt}\\ 
\noindent
The field of algorithmic randomness studies, amongst other things, what it means for infinite binary sequences to be random for some given uncertainty model.
Classically, martingale-theoretic notions of randomness involve precise uncertainty models, and it is only recently that imprecision has been introduced into this context.
As a consequence, the investigation into how imprecision alters our view on martingale-theoretic random sequences has only just begun.
In this contribution, where we allow for non-{\comp} uncertainty models, we establish a close and surprising connection between precise and imprecise uncertainty models in this randomness context.
In particular, we show that there are stationary imprecise models and non-{\comp} non-stationary precise models that have the exact same set of random sequences.
We also give a preliminary discussion of the possible implications of our result for a statistics based on imprecise probabilities, and shed some light on the practical (ir)relevance of both imprecise and non-{\comp} precise uncertainty models in that context.\vspace{5pt}\\
\textit{Keywords:} algorithmic randomness, imprecise probabilities, computability theory, probability intervals, supermartingales, non-stationarity
\end{minipage}
\normalsize

\section{Introduction}
What does it mean for an infinite binary sequence, such as \(011001100\cdots\), to be random?
This is a highly non-trivial question that has led to numerous investigations.
First of all, it is important to realise that randomness is typically defined with respect to some uncertainty model.
So, our opening question only makes sense once such a model has been specified.
Uncertainty models can be stationary or non-stationary, {\comp} or non-{\comp}, as well as precise or imprecise \revisedit{\cite{CoomanBock2017,CoomanBock2021,CoomanBock2021V2,Vovk2010,Levin1973,Levin1976,Bienvenu2011,DowneyHirschfeldt2010}}.
It is between the non-{\comp} non-stationary precise and the stationary imprecise uncertainty models that we will reveal a remarkably close connection: we will show that for every stationary imprecise model there are non-{\comp} non-stationary precise models that have the exact same set of random sequences.

The earliest notions of randomness only considered precise probability models that assign a probability~\(p\in\sqgroup{0,1}\) to the outcome~\(1\).
For instance, in 1919 Von Mises suggested considering an infinite binary sequence to be random for~\(p\) if the relative frequencies of ones along the sequence, and along all infinite subsequences selected by selection rules, converge to~\(p\) \cite{Mises1919}.
In 1939, Wald proved that for any~\(p\in\group{0,1}\), such random sequences do exist if we restrict our attention to a countable set of selection rules \cite{ambosspies2000,wald1937}.
This result, however, left open which countable set to consider.
In 1940, based on Wald's work, Church suggested adopting the countable set of {\comp} selection rules \cite{ambosspies2000,church1940}; an infinite binary sequence is then Church random for~\(p\) if the relative frequency of ones along every computably selectable infinite subsequence converges to~\(p\), where `computably selectable' essentially means that there is some finite algorithm that decides which elements to keep and which to discard.

However, there are infinite sequences that satisfy this requirement, but for which the running frequency of ones along the sequence converges to \(p\) from below.
Such sequences disobey the law of the iterated logarithm.
For this reason, Jean Ville criticised this type of randomness definition, and argued that besides the law of large numbers, a random sequence also ought to satisfy other statistical laws \cite{ambosspies2000}.
Arguments of this kind led to the development of many other randomness notions.
Some of the most well-known and well-studied amongst these are Martin-Löf randomness, {\comp} randomness and Schnorr randomness \cite{MartinLof1966,Schnorr1971book}.
The reason for this is twofold: they have an intuitive interpretation and they can be defined in several equivalent ways \cite{ChallengeOfChance2016,DowneyHirschfeldt2010}.
From a measure-theoretic point of view, for example, an infinite binary sequence is random for a {\comp} real~\(p \in\sqgroup{0,1}\) if it passes all implementable statistical tests that are associated with~\(p\).
On the other hand, if we adopt the martingale-theoretic approach, then a sequence is random for a {\comp}~\(p\) if there is no implementable betting strategy for getting arbitrarily rich along this sequence without borrowing, where the bets that are allowed are determined by~\(p\), where the betting strategy specifies the possible accumulated capital in the betting game, and where the meaning of `implementable' depends on the notion of randomness at hand; for instance, in the case of {\comp} randomness, `implementable' means that there is a finite algorithm that yields the strategy.

There is more to randomness, however, than the simple case of a single {\comp} probability~\(p\).
As mentioned above, more general uncertainty models, such as non-stationary precise ones or imprecise ones, can also be used to define notions of randomness \revisedit{\cite{Vovk2010,Bienvenu2011,Levin1973,Levin1976}}; these models need not necessarily be computable either.
\revised{In a measure-theoretic context, most of the classical approaches impose computability \cite{MartinLof1966,Schnorr1971book,DowneyHirschfeldt2010}, but non-computable uncertainty models have been studied too. One approach, for example, is to consider the implementable statistical tests that are associated with a non-computable measure, but that do not access it as a resource; this notion of randomness is known as \emph{Hippocratic} or \emph{Blind randomness} \cite{Bjorn2010,Bjorn2014,Bienvenu2011}.
%Non-computable uncertainty models can also be allowed for in a measure-theoretic notion of randomness that was put forward by Levin in 1973
Another measure-theoretic notion of randomness that allows for non-computable uncertainty models was put forward by Levin in 1973 and is nowadays known as \emph{uniform randomness} \revisedit{\cite{Levin1973,Levin1976,Bienvenu2011}.}
This notion of uniform randomness also allows for imprecision by considering so-called `effectively compact classes of probability measures'; in particular, there are tests such that a sequence passes this test if and only if it is uniformly random with respect to some probability measure in the considered class.
}
%a version of imprecise uncertainty models entered the scene in 1973, when Levin devised a notion of randomness that allows for so-called `effectively compact classes of probability measures'; this notion of randomness is nowadays known as \emph{uniform randomness} \cite{Levin1973,Bienvenu2011}.

In the context of this paper, however, we focus on the martingale-theoretic approach to randomness, for which imprecise-probabilistic uncertainty models have been considered only recently.
To be more precise, in the past few years, De Cooman and De Bock put forward a martingale-theoretic approach that associates (weak) Martin-Löf, {\comp} and Schnorr randomness with imprecise rather than precise probability models \cite{CoomanBock2017,CoomanBock2021,CoomanBock2021V2}. \revised{These imprecise models take the form of so-called forecasting systems, and need not be computable.}
This recent work still leaves room for many open questions on how allowing for imprecision (and letting go of {\comp} uncertainty models) changes our understanding of random sequences.
In the present paper, we try and contribute to this understanding by proving a remarkable relation between randomness for precise and imprecise probability models.
In particular, for every non-vanishing closed interval~\(I\subseteq\sqgroup{0,1}\) and each of the above-mentioned four martingale-theoretic notions of randomness, we will show that there is a non-stationary precise but then necessarily non-{\comp} uncertainty model for which the set of random paths is the same as for \(I\).
So, for the results in this paper, allowing for non-{\comp} uncertainty models is of crucial importance.
We leave aside whether using non-{\comp} uncertainty models in algorithmic randomness is defensible on philosophical or theoretical grounds; we simply let go of the classical computability restriction on uncertainty models, and investigate what happens if we do so.
Nevertheless, in our conclusions, we do argue why {\comp} uncertainty models are to be favoured on practical grounds.

Our contribution is structured as follows.
\revised{Sections~\ref{sec:single}--\ref{sec:comp} provide a short overview of relevant earlier work.}
We start by introducing interval forecasts and the associated coherent upper expectations in Section~\ref{sec:single}, where we also explain how their interpretation leads to a convex cone of gambles that a subject is willing to offer.
Section~\ref{sec:multiple} explains how to bet on a single variable in a way that agrees with an interval forecast, and extends this idea to a betting game/protocol on an infinite sequence of variables by defining betting strategies---which are basically supermartingales---that again agree with such interval forecasts, and that avoid borrowing.
After clarifying in Section~\ref{sec:comp} when such betting strategies are implementable, we present in Section~\ref{sec:borrow} the imprecise-probabilistic martingale-theoretic notions of (weak) Martin-Löf, {\comp} and Schnorr randomness introduced in earlier work \cite{CoomanBock2021,sum2020persiau}, and we discuss some of their properties.
At this point, in Section~\ref{sec:difference}, we are ready to formulate our central claim/result: for each of the above-mentioned martingale-theoretic notions of randomness, an infinite sequence is random for an interval forecast if and only if it is random for some specific related non-{\comp} non-stationary precise uncertainty model.
\revised{We complement this result with a reflection on what it tells us about allowing for imprecision and non-computability in a martingale-theoretic approach to algorithmic randomness.
Moreover, we explain that our result has implications for the interpretation of imprecise randomness, and that it finds a reflection in results for the measure-theoretic approach.}
In the subsequent three sections, we work towards the proof of our main claim.
In Section~\ref{sec:almostsure}, we use betting strategies to introduce global upper expectations and define almost sure events.
In Section~\ref{sec:freq} we introduce, inspired by Wald's work, a very general imprecise-probabilistic frequentist notion of randomness in terms of countable sets of selection processes, and highlight the particular imprecise-probabilistic frequentist notions of (weak) Church randomness that we introduced elsewhere \cite{CoomanBock2021,floris2021ecsqaru}.
In Section~\ref{sec:proof} then, we employ all this mathematical machinery in the `construction' of the specific non-stationary precise uncertainty models that we can use to prove our main theorem.
We conclude this paper with a discussion in Section~\ref{sec:stat} on the possible implications of our main result for a prospective statistics based on imprecise probabilities.

\section{Local uncertainty models: interval forecasts and gambles}\label{sec:single}
We begin our discussion by considering a single variable~\(X\) that may assume some value in the binary outcome space~\(\set{0,1}\).
To describe a subject's uncertainty about the unknown value of~\(X\), we use a closed interval~\(I\subseteq\sqgroup{0,1}\).
We collect all such closed intervals in the set~\(\intervals\) and call them \emph{interval forecasts}.
One way to interpret an interval forecast~\(I\in\intervals\) is to regard its elements~\(p\in I\) as possible values for the probability that \(X\) equals~\(1\), or equivalently, for the expectation of, or fair price for, the uncertain reward~\(X\).
In this paper, however, where betting will play a central role, we prefer to adopt a different interpretation.
We interpret the lower and upper bound of~\(I\) as a subject's largest acceptable buying and smallest acceptable selling price, respectively,\footnote{Traditionally, in a so-called imprecise probabilities context, the lower and upper bound of~\(I\) are interpreted as a subject's supremum acceptable buying and infimum acceptable selling price for the uncertain pay-off~\(X\). However, as was proved in \cite[Appendix A]{CoomanBock2021}, our four imprecise-probabilistic martingale-theoretic notions of randomness are the same under both interpretations. We adopt the former interpretation, because it simplifies some of our proofs.} for the uncertain pay-off~\(X\in\set{0,1}\), expressed in units of some linear utility scale.

Consequently, if \(I=\interval\), our subject is willing to accept the uncertain pay-off~\(X-p\) for any buying price~\(p\leq\underline{p}\), and is willing to accept the uncertain pay-off~\(q-X\) for any selling price~\(q\geq\overline{p}\).
Due to the assumed linearity of the utility scale, this implies that he is willing to \emph{accept} the uncertain pay-off~\(\alpha(X-p)+\beta(q-X)\) for any~\(p\leq\underline{p}\), \(q\geq\overline{p}\) and~\(\alpha,\beta\geq0\).
From the perspective of an opponent who bets against our subject, this means that our subject is willing to \emph{offer} her any uncertain reward of the form
\begin{equation}\label{eq:gambles:offered}
\alpha(p-X)+\beta(X-q)
\text{ with~\(p\leq\underline{p}\), \(q\geq\overline{p}\) and~\(\alpha,\beta\geq0\)}.
\end{equation}
To manipulate these uncertain rewards mathematically, it will be convenient to identify them with maps on~\(\set{0,1}\), whose value in~\(x\in\set{0,1}\) is obtained by replacing \(X\) with~\(x\).
The reward~\(X\), for example, then corresponds to the identity map on~\(\set{0,1}\).
We will call any such map~\(\gamble\colon\set{0,1}\to\reals\) from the binary sample space to the \emph{real} numbers a \emph{gamble}, and we denote the set of all gambles by~\(\gambles\).
Since \(\abs{\set{0,1}}=2\), gambles can be drawn in a two-dimensional space.
This allows us to visualise the set of all gambles of the type~\eqref{eq:gambles:offered}, which are offered by our subject to an opponent as a result of the commitments implicit in his specifying the interval forecast~\(I=[\underline{p},\overline{p}]\).
It is clear from Figure~\ref{fig:cone} that this set is a convex cone that includes the third quadrant.

\begin{figure}[ht]
\centering
{\caption{Let \(\underline{p}\coloneqq\nicefrac{1}{4}\) and~\(\overline{p}\coloneqq\nicefrac{3}{4}\). The green region depicts all gambles~\(\gamble\in\gambles\) that correspond to an uncertain reward~\(\alpha(p-X)+\beta(X-q)\), with~\(p\leq\underline{p}\), \(q\geq\overline{p}\) and~\(\alpha,\beta\geq0\).}\label{fig:cone}}
{
%!TEX root = 2021IJAR.tex
\begin{tikzpicture}[scale=1.65]
\draw[step=.5cm,gray,very thin] (-2.25,-2.25) grid (1.25,1.25);
% the good region
\fill[opacity=0.5,green!60!black] (0,0) -- (-5/2*9/10,5/6*9/10) -- (-5/2*9/10,-5/2*9/10) -- (5/6*9/10,-5/2*9/10) --cycle;
% axes and grid
\draw[->] (-2.25,0) -- (1.25,0) node[below right] {\small\(f(1)\)};
\draw[->] (0,-2.25) -- (0,1.25) node[above left] {\small\(f(0)\)};
% ticks
\foreach \x in {-2,-1,1}
    \draw (\x,2pt) -- (\x,-2pt)
	node[anchor=north] {\small\(\x\)};
\foreach \y in {-2,-1}
    \draw (2pt,\y) -- (-2pt,\y) 
    node[anchor=east] {\small\(\y\)};
\foreach \y in {1}
    \draw (-2pt,\y) -- (2pt,\y)
    node[anchor=west] {\small\(\y\)};
\node[anchor=north east] at (0, 0)  {\small\(0\)};
\draw[very thick,green!60!black] (0,0) -- (-5/2*9/10,5/6*9/10);
\draw[very thick,green!60!black] (0,0) -- (5/6*9/10,-5/2*9/10);
\filldraw[green!60!black] (0,0) circle (1pt);
% label
\draw (7/4-13/4,2/4) .. controls (7/4-13/4+0.1,1/2+0.2) and (9/4-13/4-0.1,3/4-0.2) .. (9/4-13/4,3/4) node [above,fill=white,rounded corners] {\(\cset{\alpha(\underline{p}-X)}{\alpha\geq0}\)};
\draw (2/4,7/4-13/4) .. controls (2/4+0.1,7/4+0.2-13/4) and (4/4-0.1,8/4-0.2-13/4) .. (4/4,8/4-13/4) node [above right=0 cm and -1cm,fill=white,rounded corners] {\(\cset{\beta(X-\overline{p})}{\beta\geq0}\)};
\end{tikzpicture}
}
\end{figure}

It will be useful to have an analytical condition that, for a subject with interval forecast~\(I\in\intervals\), characterises the gambles he is willing to offer to an opponent.
To this end, we introduce upper (and lower) expectation operators.
When \(I=p\in\reals\), i.e., when \(I\) reduces to a single number, we consider the \emph{linear expectation}~\(\ex_p\) defined by
\begin{equation}
\ex_p(\gamble)\coloneqq p\gamble(1)+(1-p)\gamble(0)\text{ for all }\gamble\in\gambles\label{eq:ex}.
\end{equation}
This is a most informative---or least conservative---model for a subject's uncertainty.
When \smash{\(I=\interval \notin \reals\)}, we consider the \emph{upper expectation}~\(\uex_I\) defined by
\begin{equation}\label{eq:upperex}
\uex_I(\gamble)
\coloneqq\max_{p\in I}\ex_p(f)
=\max\set{\ex_{\underline{p}}(\gamble),\ex_{\overline{p}}(\gamble)}
\text{ for all } \gamble\in\gambles.
\end{equation}
As a closely related operator, we consider the \emph{lower expectation}~\(\lex_I\colon\gambles\to\reals\) defined by
\begin{equation}\label{eq:lowerex}
\lex_I(\gamble)
\coloneqq\min_{p\in I}\ex_p(f)
=\min\set{\ex_{\underline{p}}(\gamble),\ex_{\overline{p}}(\gamble)}
\text{ for all~\(\gamble\in\gambles\).}
\end{equation}
It is clear that lower and upper expectations are related to each other through the following conjugacy relationship: \(\uex_I(\gamble)=-\lex_I(-\gamble)\) for all~\(\gamble\in\gambles\).

It is a matter of straightforward verification that the upper expectation~\(\uex_I\) satisfies the following so-called \emph{coherence} properties \cite{ItIP2014}.
\begin{proposition}
Consider any interval forecast~\(I\in\intervals\).
Then for all gambles~\(\gamble,g\in\gambles\), and all~\(\mu\in\reals\) and~\(\lambda\in\nonnegreals\):\footnote{\(\nonnegreals\) denotes the set of non-negative real numbers, whereas \(\posreals\) denotes the set of positive real numbers.}
\begin{enumerate}[label=\upshape{C}\arabic*.,ref=\upshape{C}\arabic*,leftmargin=*,itemsep=0pt]
\item\label{axiom:coherence:bounds} \(\min \gamble\leq\uex_I(\gamble)\leq\max\gamble\)\hfill{\upshape[boundedness]}
\item\label{axiom:coherence:homogeneity} \(\uex_I(\lambda\gamble)=\lambda\uex_I(\gamble)\)\hfill{\upshape[non-negative homogeneity]}
\item\label{axiom:coherence:subsupadditivity} \(\uex_I(\gamble+g)\leq\uex_I(\gamble)+\uex_I(g)\)\hfill{\upshape[subadditivity]}
\item\label{axiom:coherence:constantadditivity} \(\uex_I(\gamble+\mu)=\uex_I(\gamble)+\mu\)\hfill{\upshape[constant additivity]}
\item if \(\gamble\leq g\) then \(\uex_I(\gamble)\leq\uex_I(g)\) \hfill{\upshape[monotonicity]}\label{axiom:coherence:increasingness}
\end{enumerate}
\end{proposition}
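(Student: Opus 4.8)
The plan is to deduce all five coherence properties from the corresponding (stronger, linear) properties of the maps~\(\ex_p\), exploiting that \(\uex_I\) is, by~\eqref{eq:upperex}, a pointwise maximum of such maps. So the first step is to record that for each~\(p\in I\subseteq\sqgroup{0,1}\) the functional~\(\ex_p\) is a convex combination of evaluation at~\(1\) and at~\(0\): it is linear, i.e.\ \(\ex_p(\lambda\gamble+g)=\lambda\ex_p(\gamble)+\ex_p(g)\) for all~\(\gamble,g\in\gambles\) and~\(\lambda\in\reals\); it reproduces constants, \(\ex_p(\mu)=\mu\) for~\(\mu\in\reals\); it is bounded, \(\min\gamble\leq\ex_p(\gamble)\leq\max\gamble\), because its coefficients~\(p\) and~\(1-p\) are non-negative and sum to one; and it is monotone for the same reason. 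All of these are immediate from the defining formula~\eqref{eq:ex}.

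Next I would transfer these to~\(\uex_I\) via the representation \(\uex_I(\gamble)=\max\set{\ex_{\underline{p}}(\gamble),\ex_{\overline{p}}(\gamble)}\) --- which itself is worth one line: since \(p\mapsto\ex_p(\gamble)=p\bigl(\gamble(1)-\gamble(0)\bigr)+\gamble(0)\) is affine in~\(p\), its maximum over the interval~\(I=\interval\) is attained at an endpoint, giving the second equality in~\eqref{eq:upperex} (and, in the degenerate case~\(\underline{p}=\overline{p}\), reducing~\(\uex_I\) to~\(\ex_p\), so the argument is uniform). Now~\ref{axiom:coherence:bounds} follows because both~\(\ex_{\underline{p}}(\gamble)\) and~\(\ex_{\overline{p}}(\gamble)\) lie in~\(\sqgroup{\min\gamble,\max\gamble}\), hence so does their maximum;~\ref{axiom:coherence:increasingness} follows because \(\gamble\leq g\) implies \(\ex_{\underline{p}}(\gamble)\leq\ex_{\underline{p}}(g)\) and likewise at~\(\overline{p}\), and the maximum preserves the inequality;~\ref{axiom:coherence:constantadditivity} follows by pulling the constant~\(\mu\) out of each~\(\ex_p\) and then out of the maximum; and~\ref{axiom:coherence:homogeneity} follows similarly, using~\(\lambda\geq0\) to take~\(\lambda\) outside the maximum, which is exactly where non-negativity of~\(\lambda\) is essential.

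The only property that does not come out as an equality is subadditivity~\ref{axiom:coherence:subsupadditivity}: here I would write \(\uex_I(\gamble+g)=\max_{p\in I}\bigl(\ex_p(\gamble)+\ex_p(g)\bigr)\leq\max_{p\in I}\ex_p(\gamble)+\max_{p\in I}\ex_p(g)=\uex_I(\gamble)+\uex_I(g)\), the inequality being the standard fact that a supremum of a sum is at most the sum of the suprema. (Alternatively, the same bound follows from~\ref{axiom:coherence:homogeneity} together with the convexity of~\(\gamble\mapsto\uex_I(\gamble)\) as a maximum of linear maps, but the direct argument is shorter.) There is no genuine obstacle here: the statement is a routine verification, and the only spot deserving an explicit word rather than a mechanical check is the affineness argument that reduces the maximum over all of~\(I\) to a maximum over its two endpoints. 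The conjugacy~\(\uex_I(\gamble)=-\lex_I(-\gamble)\) noted before the proposition then yields the matching coherence properties for~\(\lex_I\) for free, although these are not needed for the statement as phrased.
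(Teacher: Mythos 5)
Your verification is correct and complete; the paper itself offers no proof of this proposition, merely asserting that it is ``a matter of straightforward verification'' with a citation, and your argument is exactly the standard verification that remark alludes to: transfer the linearity, constant-preservation, boundedness and monotonicity of each \(\ex_p\) through the pointwise maximum, using \(\lambda\geq0\) for~\ref{axiom:coherence:homogeneity} and the sup-of-a-sum inequality for~\ref{axiom:coherence:subsupadditivity}. You also correctly flag the one step worth a word --- that affineness of \(p\mapsto\ex_p(\gamble)\) reduces the maximum over \(I\) to its endpoints, with the singleton case \(\underline{p}=\overline{p}\) handled uniformly --- so there is nothing to add.
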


These coherence properties \ref{axiom:coherence:bounds}--\ref{axiom:coherence:constantadditivity} allow us to show fairly directly that the upper expectation~\(\uex_I\) indeed characterises the gambles that are offered by our subject.

\begin{proposition}\label{prop:from:I:to:E:andback}
Consider any gamble~\(\gamble\in\gambles\) and any interval forecast~\(I=\interval\in\intervals\).
Then \(\uex_I(\gamble)\leq 0\) if and only if there are \(p\leq\underline{p}\), \(q \geq\overline{p}\) and~\(\alpha,\beta\geq0\) such that \(\gamble=\alpha(p-X)+\beta(X-q)\).
\end{proposition}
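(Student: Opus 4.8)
The plan is to prove the two implications separately, using throughout that $\gambles$ is a two-dimensional space.

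For the \emph{if} direction, suppose $\gamble=\alpha(p-X)+\beta(X-q)$ with $p\leq\underline{p}$, $q\geq\overline{p}$ and $\alpha,\beta\geq0$. I would successively apply subadditivity~\ref{axiom:coherence:subsupadditivity}, non-negative homogeneity~\ref{axiom:coherence:homogeneity} and constant additivity~\ref{axiom:coherence:constantadditivity} to obtain
\[
\uex_I(\gamble)\leq\alpha\uex_I(p-X)+\beta\uex_I(X-q)=\alpha\bigl(p+\uex_I(-X)\bigr)+\beta\bigl(\uex_I(X)-q\bigr),
\]
and then read off from~\eqref{eq:upperex} that $\uex_I(X)=\max\set{\underline{p},\overline{p}}=\overline{p}$ and $\uex_I(-X)=\max\set{-\underline{p},-\overline{p}}=-\underline{p}$, whence $\uex_I(\gamble)\leq\alpha(p-\underline{p})+\beta(\overline{p}-q)\leq0$.

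For the \emph{only if} direction, which is where the real content sits, I would work with the two gambles $g_1\coloneqq\underline{p}-X$ and $g_2\coloneqq X-\overline{p}$, dealing first with the case $\underline{p}<\overline{p}$. There $g_1$ and $g_2$ are linearly independent in $\gambles$ — the $2\times 2$ matrix with rows $(g_1(1),g_1(0))$ and $(g_2(1),g_2(0))$ has determinant $\overline{p}-\underline{p}\neq0$ — so every gamble, and $\gamble$ in particular, can be written uniquely as $\gamble=\alpha g_1+\beta g_2=\alpha(\underline{p}-X)+\beta(X-\overline{p})$ for some $\alpha,\beta\in\reals$. The decisive computation is that $\ex_{\underline{p}}(g_1)=\ex_{\overline{p}}(g_2)=0$ while $\ex_{\overline{p}}(g_1)=\ex_{\underline{p}}(g_2)=\underline{p}-\overline{p}<0$, so that, by linearity of $\ex_p$ in~\eqref{eq:ex}, $\ex_{\underline{p}}(\gamble)=\beta(\underline{p}-\overline{p})$ and $\ex_{\overline{p}}(\gamble)=\alpha(\underline{p}-\overline{p})$. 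Consequently $\uex_I(\gamble)=\max\set{\ex_{\underline{p}}(\gamble),\ex_{\overline{p}}(\gamble)}\leq0$ forces both $\alpha\geq0$ and $\beta\geq0$, and choosing $p\coloneqq\underline{p}$, $q\coloneqq\overline{p}$ gives the representation we want.

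If the degenerate interval $\underline{p}=\overline{p}$ also has to be covered, then $g_1,g_2$ are no longer independent and $\uex_I$ reduces to the linear expectation $\ex_r$ with $r\coloneqq\underline{p}$; there I would derive the representation from $\ex_r(\gamble)\leq0$ by a short case split on the sign of $\gamble(1)-\gamble(0)$, writing $\gamble$ as a non-negative multiple of $X-q$ for an appropriate $q\geq r$ when $\gamble(1)\geq\gamble(0)$, symmetrically as a non-negative multiple of $p-X$ otherwise, and treating the constant gamble on its own. I do not foresee a serious obstacle: the only mildly fiddly parts are the sign bookkeeping in the \emph{if} direction and this extra singleton case, while the core of the argument is the one-line linear-independence computation above.
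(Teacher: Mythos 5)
Your proof is correct, but the ``only if'' direction takes a genuinely different route from the paper's. The paper argues by a case split on the sign of \(\gamble(1)-\gamble(0)\): when the gamble is strictly increasing it writes \(\gamble=\beta(X-q)\) with \(\beta=\gamble(1)-\gamble(0)\) and a \(q\) computed from the gamble itself, then uses the coherence axioms and \(\uex_I(X)=\overline{p}\) to conclude \(q\geq\overline{p}\) (symmetrically for the decreasing case, and a separate constant case); so the representation it produces uses only one of the two generators at a time, with \(p\) or \(q\) varying with \(\gamble\). You instead fix the canonical generators \(g_1=\underline{p}-X\) and \(g_2=X-\overline{p}\), observe they form a basis of \(\gambles\) when \(\underline{p}<\overline{p}\), and read off the coefficients from the two linear expectations via \(\ex_{\underline{p}}(\gamble)=\beta(\underline{p}-\overline{p})\) and \(\ex_{\overline{p}}(\gamble)=\alpha(\underline{p}-\overline{p})\). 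This is arguably more illuminating---it exhibits the set \(\cset{\gamble}{\uex_I(\gamble)\leq0}\) directly as the cone generated by \(g_1\) and \(g_2\), matching Figure~\ref{fig:cone}, and shows the extra freedom \(p\leq\underline{p}\), \(q\geq\overline{p}\) is never needed in the non-degenerate case---at the price of having to treat the degenerate interval \(\underline{p}=\overline{p}\) separately, which the paper's case split on \(\gamble(1)\) versus \(\gamble(0)\) handles uniformly. Your ``if'' direction coincides with the paper's (the only cosmetic difference being that you evaluate \(\uex_I(-X)=-\underline{p}\) directly from Equation~\eqref{eq:upperex} where the paper invokes conjugacy). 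The one small slip is the phrase ``a non-negative multiple of \(X-q\) when \(\gamble(1)\geq\gamble(0)\)'' in your degenerate case: a nonzero constant gamble is not such a multiple, so the inequality there should be strict; since you already announce a separate treatment of the constant gamble, this is harmless.
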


\begin{proof}%{\bfseries of Proposition~\ref{prop:from:I:to:E:andback}\quad}
% Simplified and checked by Gert
%The condition~\(\uex_I(\gamble)\leq0\) is equivalent to the combination of the conditions~\(\ex_{\underline{p}}(\gamble)\leq0\) and~\(\ex_{\overline{p}}(\gamble)\leq0\), by Equation~\eqref{eq:upperex}, and therefore corresponds to the intersection of the two halfplanes determined by these conditions.
%These are the halfplanes below the border lines \(\cset{\alpha(\underline{p}-X)}{\alpha\in\reals}\) and \(\cset{\beta(X-\overline{p})}{\beta\in\reals}\).
%This intersection is precisely also the convex cone with extreme rays~\(\cset{\alpha(\underline{p}-X)}{\alpha\geq0}\) and~\(\cset{\beta(X-\overline{p})}{\beta\geq0}\), which is precisely the set of all gambles satisfying condition~\eqref{eq:gambles:offered}.

To prove the direct implication, assume that \(\uex_I(\gamble)\leq 0\).
We will consider three cases: \(\gamble(1)>\gamble(0)\), \(\gamble(1)<\gamble(0)\) and~\(\gamble(1)=\gamble(0)\).
If \(\gamble(1)>\gamble(0)\), then \(\gamble\) can always be written as \(\gamble=\beta(X-q)\), with~\(\beta=\gamble(1)-\gamble(0)\in\posreals\) and~\(q=\nicefrac{\gamble(0)}{(\gamble(0)-\gamble(1))}\in\reals\).
Since \(\beta>0\), it now suffices to prove that \(q\geq \overline{p}\).
To do so, observe that by Equation~(\ref{eq:upperex}) it holds that \smash{\(\uex_I(X)=\overline{p}\)}.
Consequently, it follows from \ref{axiom:coherence:homogeneity} and \ref{axiom:coherence:constantadditivity} that
\begin{align*}
0
\geq
\uex_I(\gamble)
=\uex_I(\beta(X-q))
=\beta (\uex_I(X)-q)
=\beta (\overline{p}-q),
\end{align*}
and hence, since \(\beta>0\), that \(q\geq \overline{p}\).
If \(\gamble(1)<\gamble(0)\), then \(\gamble\) can always be written as \(\gamble=\alpha(p-X)\), with~\(\alpha=\gamble(0)-\gamble(1)\in\posreals\) and~\(p=\nicefrac{\gamble(0)}{(\gamble(0)-\gamble(1))}\in\reals\).
Since \(\alpha>0\), it now suffices to prove that \(p\leq\underline{p}\).
To do so, observe that by Equation~(\ref{eq:lowerex}) it holds that \smash{\(\lex_I(X)=\underline{p}\)}.
Consequently, it follows from conjugacy, \ref{axiom:coherence:homogeneity} and \ref{axiom:coherence:constantadditivity} that
\begin{align*}
0
\geq\uex_I(\gamble)
=-\lex_I(-\gamble)
=-\lex_I(\alpha(X-p))
=-\alpha\lex_I(X-p)
=-\alpha(\lex_I(X)-p)
=\alpha(p-\lex_I(X))
=\alpha(p-\underline{p}),
\end{align*}
and hence, since \(\alpha>0\), that \(p\leq\underline{p}\).
If \(\gamble(1)=\gamble(0)\), then \(\gamble(1)=\gamble(0)=\uex_I(\gamble)\leq 0\) by \ref{axiom:coherence:bounds}, and hence, \(\gamble\) can always be written as \(\gamble=\uex_I(\gamble)=\alpha(0-X)+\beta(X-1)\), with~\(\alpha=\beta=-\uex_I(\gamble)\geq 0\), \(0\leq\underline{p}\) and~\(1\geq\overline{p}\).

To prove the converse implication, assume that \(\gamble=\alpha(p-X)+\beta(X-q)\), with~\(p\leq\underline{p}\), \(q\geq \overline{p}\) and~\(\alpha,\beta\geq0\).
From \ref{axiom:coherence:homogeneity}-\ref{axiom:coherence:constantadditivity}, conjugacy and by recalling that \smash{\(\uex_I(X)=\overline{p}\)} and \smash{\(\lex_I(X)=\underline{p}\)}, it then immediately follows that
\begin{align*}
\uex_I(\gamble)
&=\uex_I(\alpha(p-X)+\beta(X-q))
\leq\uex_I(\alpha(p-X))+\uex_I(\beta(X-q))
=\alpha\uex_I(p-X)+\beta\uex_I(X-q)\\
&=\alpha(p+\uex_I(-X))+\beta(\uex_I(X)-q)
=\alpha(p-\lex_I(X))+\beta(\uex_I(X)-q)
=\alpha(p-\underline{p})+\beta(\overline{p}-q)
\leq 0,
\end{align*}
which completes the proof.
\end{proof}

\section{Forecasting systems and betting strategies}\label{sec:multiple}
We can better describe the correspondence between a subject's interval forecasts, which specify his beliefs about the unknown outcomes of binary variables, and actual outcome sequences, by taking him up on a specific betting game.

We first introduce the betting game for a single binary variable~\(X\).
There are three players involved: \emph{Forecaster} (who will take up our subject's part), \emph{Sceptic} (who is his opponent) and \emph{Reality}.
Forecaster initiates the game by providing an interval forecast~\(I\subseteq\sqgroup{0,1}\), which describes, as we explained in the previous section, his beliefs about---and betting commitments related to---the uncertain outcome~\(X\in\set{0,1}\).
Next, Sceptic, being Forecaster's opponent, is allowed to pick any gamble~\(\gamble\in\gambles\) that Forecaster is willing to offer, in the specific sense that \(\uex_I(\gamble)\leq 0\).
This leads to an uncertain (possibly negative) gain~\(\gamble(X)\) for Sceptic and~\(-\gamble(X)\) for Forecaster.
Finally, Reality reveals the outcome~\(x\in\set{0,1}\), which leads to an actual (possibly negative) gain~\(\gamble(x)\) for Sceptic and~\(-\gamble(x)\) for Forecaster.

In order to extend these ideas to an infinite betting game involving a sequence of successively revealed binary variables~\(X_1\), \dots, \(X_n\), \dots, we require a bit more terminology.

In what follows, \(\naturals\) denotes the set of natural numbers, and \(\naturalswithzero\coloneqq\naturals\cup\set{0}\) denotes the set of non-negative integers.
An infinite outcome sequence~\((\xvaltolong,\dots)\) is called a \emph{path} and is also denoted by~\(\pth\).
All such paths are collected in the set~\(\pths\coloneqq\set{0,1}^\naturals\), and for every path~\(\pth=(\xvaltolong,\dots)\in\pths\), we let \(\pthton\coloneqq(\xvaltolong)\) and~\(\pthatnplus\coloneqq\xvalatnplus\) for all~\(n\in\naturalswithzero\).
A finite outcome sequence~\(\xvalton\coloneqq (\xvaltolong)\in\set{0,1}^n\) is called a \emph{situation} and is also denoted by~\(\sit\), with length~\(\abs{\sit}\coloneqq n\), for any~\(n\in\naturalswithzero\).
All situations are collected in the set~\(\sits\coloneqq\bigcup_{n\in\naturalswithzero}\set{0,1}^n\).
For any~\(\sit=(\xvaltolong)\in\sits\) and~\(x\in\set{0,1}\), we write \(\sit x\) as a shorthand notation for~\((\xvaltolong,x)\).
We call the empty sequence~\(\init\coloneqq\xvalto{0}=()\) the \emph{initial situation}.
Note that for every path~\(\pth\in\pths\), we have that \(\pthto{0}=\init\).

It will also be useful to be able to deal with objects that depend on the situations.
Formally, we define a \emph{process}~\(\processgeneral\) as a map on the set~\(\sits\) of all situations.
In particular, a \emph{real} process~\(\process\colon\sits\to\reals\) is a map from situations to real numbers.
A real process~\(\process\) is called \emph{non-negative} if \(\process(\sit)\geq0\) for all~\(\sit\in\sits\); it is called \emph{positive} if \(\process(\sit)>0\) for all~\(\sit\in\sits\).
We call a non-negative real process~\(\process\) a \emph{test process} if additionally \(\process(\init)=1\).
A zero-one valued process~\(\selection\)---with~\(\selection(\sit)\in\set{0,1}\) for all~\(\sit\in\sits\)---is called a \emph{selection process}.
If a process~\(\processgeneral\) depends only on the situations~\(\sit\in\sits\) through their length~\(\abs{\sit}\), we call it \emph{temporal}, and then also write \(\processgeneral(n)\) instead of~\(\processgeneral(\sit)\) for all~\(n\in\naturalswithzero\) and~\(\sit\in\sits\) with~\(n=\abs{\sit}\).

Forecaster's part in the game now consists in providing an interval-valued process.
That is, he provides an interval forecast~\(I_{\sit}\in\intervals\) for every finite outcome sequence~\(\sit\in\sits\), in order to describe his beliefs about the binary variable~\(X_{\abs{\sit}+1}\) after observing the outcome~\(\sit\) for the previous \(\abs{\sit}\) variables.
The interval-valued process that summarises these interval forecasts, we call a forecasting system.

\begin{definition}[Forecasting system]
A \emph{forecasting system} is a map~\(\frcstsystem\colon\sits\to\intervals\) that associates with every situation~\(\sit\in\sits\) an interval forecast~\(\frcstsystem(\sit)\in\intervals\).
A forecasting system~\(\frcstsystem\) is called \emph{precise} if \(\frcstsystem(\sit)\in\reals\) for all~\(\sit\in\sits\).
We denote the set~\(\smash{\intervals^{\sits}}\) of all forecasting systems by~\(\frcstsystems\).
\end{definition}

With every forecasting system~\(\frcstsystem\in\frcstsystems\), we associate two real processes~\(\lfrcstsystem\) and~\(\ufrcstsystem\) defined by~\(\lfrcstsystem(\sit)\coloneqq\min\frcstsystem(\sit)\) and~\(\ufrcstsystem(\sit)\coloneqq\max\frcstsystem(\sit)\) for all~\(\sit\in\sits\).
Clearly, a forecasting system~\(\frcstsystem\in\frcstsystems\) is precise if and only if~\(\lfrcstsystem(\sit)=\ufrcstsystem(\sit)\) for all~\(\sit\in\sits\).
If a forecasting system~\(\frcstsystem\in\frcstsystems\) uses the very same interval forecast~\(I\in\intervals\) in all situations, i.e., \(\frcstsystem(\sit)=I\) for all~\(\sit\in\sits\), then we call it \emph{stationary}, and we simplify the notation by writing \(I\) instead of~\(\frcstsystem\).
If for two forecasting systems \(\frcstsystem,\frcstsystem^* \in\frcstsystems\) it holds that \(\frcstsystem(s)\subseteq\frcstsystem^*(s)\) for all~\(s\in\sits\), then we say that \(\frcstsystem^*\) is less informative---or more conservative---than \(\frcstsystem\), and denote this by \(\frcstsystem\subseteq\frcstsystem^*\).
\revised{In this case, if \(\frcstsystem\) is precise, then we also say that \(\frcstsystem\) is \emph{compatible} with \(\frcstsystem^*\), and denote this by \(\frcstsystem\in\frcstsystem^*\). It is also worth noting that, through the Kolmogorov extension theorem, every precise forecasting system uniquely determines a corresponding probability measure on the algebra (of sets of paths) generated by the situations. In measure-theoretic notions of randomness, one will typically focus on this measure instead of its forecasting system. In our game-theoretic approach, however, the forecasting system takes the center stage.}

Once Forecaster has specified a forecasting system~\(\frcstsystem\in\frcstsystems\), Sceptic is allowed to adopt any betting strategy that, for every situation~\(\sit\in\sits\), selects one of the gambles that Forecaster is bound to offer by his specification of the interval forecast~\(\frcstsystem(\sit)\in\intervals\), i.e., some gamble~\(\gamble_{\sit}\in\gambles\) for which \(\uex_{\frcstsystem(\sit)}(\gamble_{\sit})\leq0\).
Afterwards, Reality reveals the successive outcomes~\(X_n=\xvalatn\) at each successive \emph{time instant}~\(n\in\naturals\), leading to the sequence~\(\pth=(\xvaltolong,\dots)\).
At every time instant \(n\), after Reality has revealed the outcome~\(\xvalatn\), Sceptic uses the gamble~\(\gamble_{\xvalton}\) that corresponds to her betting strategy.
Next, Reality reveals the subsequent outcome~\(X_{n+1}=\xval[n+1]\in\set{0,1}\) and the reward~\(\gamble_{\xvalton}(\xval[n+1])\) goes to Sceptic.
Moreover, we will prohibit Sceptic from borrowing.

In order to formalise these betting strategies for Sceptic, we define a \emph{gamble process} as a map from situations to gambles.
In particular, we associate with every real process~\(\process\) a \emph{process difference}~\(\diffprocess\colon\sits\to\gambles\), which is the gamble process that maps any \(\sit\in\sits\) to the gamble~\(\diffprocess(\sit)\coloneqq\process(\sit\,\bullet)-\process(\sit)\), where we use \(\process(\sit\,\bullet)\) to denote the gamble on~\(\set{0,1}\) whose value, for any \(x\in\set{0,1}\), is given by \(\process(\sit x)\).
Note that \(\process(\xvalton)=\process(\init)+\sum_{k=0}^{n-1}\adddelta\process(\xvaltok)(\xvalatkplus)\) for all~\(\xvalton\in\sits\), with~\(n\in\naturalswithzero\).

Given a forecasting system~\(\frcstsystem\in\frcstsystems\), we call a real process~\(\supermartin\) a \emph{supermartingale} for~\(\frcstsystem\) if \smash{\(\uex_{\frcstsystem(\sit)}(\adddelta\supermartin(\sit))\leq 0\)} for all~\(\sit\in\sits\).
A real process~\(\submartin\) is called a \emph{submartingale} for~\(\frcstsystem\) if \(-\submartin\) is a supermartingale for~\(\frcstsystem\), meaning that \(\lex_{\frcstsystem(\sit)}(\adddelta\submartin(\sit))\geq 0\) for all~\(\sit\in\sits\).
All supermartingales and submartingales for~\(\frcstsystem\) are respectively collected in the sets~\smash{\(\supermartins\)} and \smash{\(\submartins\)}.

Supermartingales correspond to Sceptic's allowed betting strategies.
Indeed, assume that Forecaster adopts the forecasting system~\(\frcstsystem\in\frcstsystems\), consider a time instant \(n\in\naturalswithzero\), and consider the situation where Reality has revealed a finite outcome sequence~\(\xvalton\in\sits\).
A supermartingale~\(\supermartin\) for~\(\frcstsystem\) then specifies a gamble~\(\adddelta\supermartin(\xvalton)\in\gambles\) that Sceptic is allowed to pick.
If she does, and Reality reveals the outcome~\(\xvalatnplus\in\set{0,1}\), the (possibly negative) amount~\(\adddelta\supermartin(\xvalton)(\xvalatnplus)\) goes to Sceptic and her total capital becomes
\begin{align*}\label{eq:additive:gambles}
\supermartin(\xvaltonn)
=\supermartin(\xvalton)+\adddelta\supermartin(\xvalton)(\xvalatnplus)
=\supermartin(\init)+\sum_{k=0}^{n}\adddelta\supermartin(\xvaltok)(\xvalatkplus),
\end{align*}
with~\(\supermartin(\init)\) her initial capital.
By focusing on non-negative supermartingales, we additionally prevent Sceptic from borrowing.

As an important special case, we consider \emph{test supermartingales} \(\test \colon\sits\to\reals\) for~\(\frcstsystem\).
These are non-negative supermartingales for~\(\frcstsystem\) for which \(\test(\init)\coloneqq1\).
We collect all test supermartingales for~\(\frcstsystem\) in the set~\(\testsupermartins\).
In one of our notions of randomness, we will adopt a particular way of defining such test supermartingales by focusing on multiplicative rather than additive betting strategies.
For this reason, we introduce the notion of a \emph{multiplier process}, which is a non-negative gamble process.
With every such multiplier process~\(\multprocess\), we associate a test process~\(\mint\), defined by the `initial condition'~\(\mint(\init)\coloneqq 1\) and, for all~\(\sit\in\sits\) and~\(x\in\set{0,1}\), by the recursion equation~\(\mint(\sit x)\coloneqq\mint(\sit) \multprocess(\sit)(x)\), and we say that \(\mint\) is \emph{generated by}~\(\multprocess\).
In particular, we say that a test supermartingale~\(\test\in\testsupermartins\) is \emph{generated by a multiplier process} if there is some non-negative gamble process~\(\multprocess_\test\) such that \(\test(\xvalton)=\prod_{k=0}^{n-1}\multprocess_\test(\xvaltok)(\xvalatkplus)\) for all~\(\xvalton\in\sits\), with~\(n\in\naturalswithzero\).

\section{Computable forecasting systems and implementable betting strategies}\label{sec:comp}
Sceptic will not be allowed to adopt just any non-negative supermartingale as a betting strategy.
We will also require that it should be implementable \cite{ambosspies2000,bienvenu2009}.
Loosely speaking, this means that for each of her betting strategies, there is some finite description that specifies how to approximate it (to arbitrary precision). 
In order to be able to formalise when a supermartingale is implementable, we turn to computability theory.

As a basic building block, this theory considers \emph{recursive} natural functions \(\phi\colon\naturalswithzero\to\naturalswithzero\), which are maps that can be computed by a Turing machine \cite{Pour-ElRichards2016}.
By the Church--Turing thesis, this is equivalent to the existence of a finite algorithm that, given the input \(n\in\naturalswithzero\), outputs \(\phi(n)\in\naturalswithzero\).
For example, since a path~\(\pth\in\pths\) is a function from \(\naturals\) to~\(\set{0,1}\), we call it recursive if there is some finite algorithm that, given the input \(n\in\naturals\), outputs \(\pthatn\in\set{0,1}\).
\revisedit{Instead of~\(\naturalswithzero\), we will also consider functions whose domain equals~\(\naturals\), \(\sits\), \(\sits\times\set{0,1}\), \(\sits\times\naturalswithzero\) or any other countably infinite set~\(\mathscr{D}\) whose elements can be encoded by the natural numbers.
A function $\phi\colon \mathscr{D}\to\naturalswithzero$ is then called \emph{recursive} if there is a Turing machine that, when given the natural-valued encoding of an element $d\in\mathscr{D},$\footnote{\revisedit{The choice of encoding is not important provided different encodings can be translated into each other. The latter means that we can algorithmically decide whether a natural number is an encoding of an object and, if this is the case, that we can find an encoding of the same object with respect to the other encoding \cite{Alexander2017book}.}} outputs $\phi(d)$.
Also in this more general case, by the Church--Turing thesis, this is equivalent to the existence of a finite algorithm that, given the input \(d\in\mathcal{D}\), outputs \(\phi(d)\in\naturalswithzero\). In fact, throughout this paper and its proofs, it is this alternative, more intuitive, characterisation that we will be using.} 
For example, we will call a selection process~\(\selection\) recursive if there is a finite algorithm that, given the input \(\sit\in\sits\), outputs the binary digit \(\selection(\sit)\in\set{0,1}\).

More generally, for any countable domain~\(\mathscr{D}\) \revisedit{that can be encoded by the natural numbers}, a \emph{rational} map~\(q\colon\mathscr{D}\to\rationals\) is called \emph{recursive} if there are three recursive natural maps \(a,b,c\colon\mathscr{D}\to\naturalswithzero\) such that
\begin{equation*}
b(d) \neq 0\text{ and } q(d)=\smash{(-1)^{c(d)}\frac{a(d)}{b(d)}}\text{ for all } d\in\mathscr{D}.
\end{equation*}
Since a finite number of finite algorithms can always be combined into one finite algorithm \cite{MichaelSipser2006}, this is equivalent to the existence of a finite algorithm that outputs \(q(d)\) when given \(d\in\mathscr{D}\) as input.
In particular, a rational process~\(\process\colon\sits\to\rationals\) is called recursive if there is a recursive rational map~\(q\colon\sits\to\rationals\) such that \(\process(\sit)=q(\sit)\) for all~\(\sit\in\sits\).

Computability theory not only considers recursive objects, but also uses them to introduce \emph{\lscomp} ones.
The simplest case is that of a {\lscomp} real number: a real number~\(x\in\reals\) is called {\lscomp} if there is a recursive rational map~\(q\colon\naturalswithzero\to\rationals\) such that \(q(n)\leq q(n+1)\) for all~\(n\in\naturalswithzero\), and~\(x=\lim_{m\to\infty}q(m)\).
More generally, for any countable domain~\(\mathscr{D}\) \revisedit{that can be encoded by the natural numbers}, a real map~\(r\colon\mathscr{D}\to\reals\) is called {\lscomp} if there is a recursive rational map~\(q\colon\mathscr{D}\times\naturalswithzero\to\rationals\) such that \(q(d,n)\leq q(d,n+1)\) and~\(r(d)=\lim_{m\to\infty}q(d,m)\) for all~\(d\in\mathscr{D}\) and~\(n\in\naturalswithzero\).
A real map~\(r\) is thus {\lscomp} if there is some finite algorithm that, for every element \(d\in\mathscr{D}\) of the domain, can provide an increasing sequence~\((q(d,n))_{n\in\naturalswithzero}\) of rational numbers that approaches the real number~\(r(d)\) from below---but without necessarily knowing how good the lower bounds~\(q(d,n)\) are.
In particular, a real process~\(\process\) is called {\lscomp} if there is a recursive rational map~\(q\colon\sits\times\naturalswithzero\to\rationals\) such that \(q(\sit,n)\leq q(\sit,n+1)\) and~\(\process(\sit)=\lim_{m\to\infty}q(\sit,m)\) for all~\(\sit\in\sits\) and~\(n\in\naturalswithzero\).
Similarly, a real multiplier process~\(\multprocess\) is called {\lscomp} if there is a recursive rational map~\(q\colon\sits\times\set{0,1}\times\naturalswithzero\to\rationals\) such that \(q(\sit,x,n)\leq q(\sit,x,n+1)\) and~\(\multprocess(\sit)(x)=\lim_{m\to\infty}q(\sit,x,m)\) for all~\(\sit\in\sits\), \(x\in\set{0,1}\) and~\(n\in\naturalswithzero\).
Two types of {\lscomp} (gamble) processes that we will make frequent use of, are {\lscomp} test supermartingales and {\lscomp} multiplier processes that generate test supermartingales.

We call a real map~\(r\colon\mathscr{D}\to\reals\) \emph{\uscomp} if \(-r\) is {\lscomp}.
If a real map~\(r\) is both lower and {\uscomp}, then we call it \emph{\comp}.
Equivalently \cite{CoomanBock2017}, a real map~\(r\colon\mathscr{D}\to\reals\) is {\comp} if and only if there is a recursive rational map~\(q\colon\mathscr{D}\times\naturalswithzero\to\rationals\) such that
\begin{equation*}
\abs{r(d)-q(d,n)}<2^{-n}\text{ for all } d\in\mathscr{D}\text{ and } n\in\naturalswithzero.
\end{equation*}
This means that there is a finite algorithm that, for every \(d\in\mathscr{D}\) and~\(n\in\naturalswithzero\), outputs a rational number~\(q(d,n)\) that approximates the real number~\(r(d)\) with a precision of at least~\(2^{-n}\).
In particular, a real number~\(x\in\reals\) is called {\comp} if there is a recursive rational map~\(q\colon \naturalswithzero\to\rationals\) such that \(\abs{x-q(n)}<2^{-n}\) for all~\(n\in\naturalswithzero\).
An interval forecast~\(I\in\intervals\) is called {\comp} if the real numbers~\(\min I\) and~\(\max I\) are both {\comp}.
Moreover, a real process~\(\process\colon\sits\to\reals\) is called {\comp} if there is a recursive rational map~\(q\colon\sits\times\naturalswithzero\to \rationals\) such that \(\abs{\process(\sit)-q(\sit,n)}<2^{-n}\) for all~\(\sit\in\sits\) and~\(n\in\naturalswithzero\).
Finally, a forecasting system~\(\frcstsystem\in\frcstsystems\) is called {\comp} if the real processes~\(\lfrcstsystem\) and~\(\ufrcstsystem\) are both {\comp}.

So, a real process~\(\process\colon\sits\to\reals\) can be implementable by being recursive, {\lscomp}, {\uscomp} or {\comp}, but also, if it is generated by a multiplier process~\(\multprocess\), by \(\multprocess\) being of one of these four types. 
In what follows, it will be useful to have the following sets of implementable real processes at our notational disposal:
\begin{center}
\begin{tabular}{l|l}
\(\mathscr{F}_\ml\) \:&\: all {\lscomp} test processes;\\[1ex]
\(\mathscr{F}_\wml\) \:&\: all test processes generated by\\
&\: {\lscomp} multiplier processes;\\[1ex]
\(\mathscr{F}_\co=\mathscr{F}_\s\) \:&\: all recursive positive rational test processes.
\end{tabular}
\end{center}
From the discussion above, it is clear that if \(\process\) is recursive, then it is {\comp} and hence {\lscomp} as well; so \(\mathscr{F}_\s=\mathscr{F}_\co\subseteq\mathscr{F}_\ml\).
It follows from discussions elsewhere \cite[Section 5]{CoomanBock2021} that all four sets are in fact nested: \(\mathscr{F}_\s=\mathscr{F}_\co\subseteq\mathscr{F}_\wml\subseteq\mathscr{F}_\ml\).
As a result, for any \(\random\in\set{\ml,\wml,\co,\s}\), it also holds that the set~\(\mathscr{F}_\random\) is countably infinite, because the {\lscomp} test processes are countable in number; see for example \cite[Lemma 13]{Vovk2010}.

\section{Several martingale-theoretic notions of randomness}\label{sec:borrow}
Now that we know in what different ways Sceptic's betting strategies can be implementable, we are ready to introduce four different martingale-theoretic notions of randomness: Martin-Löf (\ml) randomness, weak Martin-Löf (\wml) randomness, {\comp} (\co) randomness and Schnorr (\s) randomness; except for \wml-randomness, these are all generalisations to interval forecasts of classical notions of randomness.
We recall from the Introduction that, in the martingale-theoretic setting, a path~\(\pth\in\pths\) is random for a forecasting system~\(\frcstsystem\) if Sceptic has no implementable betting strategy that is allowed by~\(\frcstsystem\) and that makes her arbitrarily rich on~\(\pth\) without borrowing.
This is true for each of the above-mentioned four notions of randomness.
The difference between them lies in the way the allowed betting strategies are implementable, and in the way Sceptic should not be able to become arbitrarily rich.
The allowed betting strategies for the notions of \ml-, \wml-, \co- and \s-randomness are gathered in the following sets:
\begin{center}
\begin{tabular}{l|l}
\(\mltests{\frcstsystem}\coloneqq\mathscr{F}_\ml\cap\testsupermartins\) \:&\: all {\lscomp} test supermartingales for~\(\frcstsystem\);\\[1ex]
\(\weakmltests{\frcstsystem}\coloneqq\mathscr{F}_\wml\cap\testsupermartins\) \:&\: all test supermartingales for~\(\frcstsystem\) generated by\\
&\: {\lscomp} multiplier processes;\\[1ex]
\(\comptests{\frcstsystem}\coloneqq\mathscr{F}_\co\cap\testsupermartins\) \:&\: all recursive positive rational test supermartingales for~\(\frcstsystem\);\\[1ex]
\(\schnorrtests{\frcstsystem}\coloneqq\mathscr{F}_\s\cap\testsupermartins\) \:&\: all recursive positive rational test supermartingales for~\(\frcstsystem\).
\end{tabular}
\end{center}
By recalling that \(\mathscr{F}_\s=\mathscr{F}_\co\subseteq\mathscr{F}_\wml\subseteq\mathscr{F}_\ml\), it readily follows that the above sets of betting strategies satisfy the following relations for any forecasting system~\(\frcstsystem\): \(\schnorrtests{\frcstsystem}=\comptests{\frcstsystem}\subseteq\weakmltests{\frcstsystem}\subseteq\mltests{\frcstsystem}\).% \cite{CoomanBock2021}.

Now, for any~\(\random\in\set{\ml, \wml, \co}\), we will consider a path~\(\pth\in\pths\) to be \random-random for a forecasting system~\(\frcstsystem\) if Sceptic can adopt no betting strategy~\(\test\in\randomtests{\frcstsystem}\) that is \emph{unbounded} on~\(\pth\), in the sense that \(\limsup_{n\to\infty}\test(\pthton)=\infty\).

\begin{definition}[{\cite[Definition 2]{CoomanBock2021}\cite[Definition 5 and Proposition 6]{sum2020persiau}}]\label{def:notionsofrandomness}
For any~\(\random\in\set{\ml,\wml,\co}\), a path~\(\pth\in\pths\) is \random-random for a forecasting system~\(\frcstsystem\in\frcstsystems\) if no test supermartingale~\(\test\in\overline{\mathbb{T}}_\random(\frcstsystem)\) is unbounded on~\(\pth\).
\end{definition}
\noindent We want to emphasise here that, contrary to what is typically done in the literature for algorithmic randomness associated with precise forecasts, we don't necessarily impose {\compy} requirements on the forecasting systems~\(\frcstsystem\).

In the case of~\(\co\)-randomness, it may initially seem rather unintuitive that we consider recursive rational test supermartingales instead of {\comp} real non-negative supermartingales, since the naming suggests the latter.
However, as we have discussed in \cite[Section 5]{sum2020extended}, both sets of betting strategies result in the same notion of randomness; we use the former here for mathematical convenience.
In what follows, we will also implicitly use this equivalence when referring to results proven elsewhere.

To introduce \s-randomness, we require the additional notion of a \emph{real growth function} \(\realgrowth\colon\naturalswithzero\to\nonnegreals\), which is a {\comp} map from non-negative integers to non-negative reals that is non-decreasing---so \(\realgrowth(n+1)\geq\realgrowth(n)\) for all~\(n\in\naturalswithzero\)---and unbounded---so \(\lim_{n\to\infty}\realgrowth(n)=\infty\).
Now, a path~\(\pth\in\pths\) is considered to be \s-random for a forecasting system~\(\frcstsystem\) if no test supermartingale~\(\test\in\schnorrtests{\frcstsystem}\) is \emph{computably unbounded} on~\(\pth\), meaning that there is some real growth function~\(\realgrowth\) such that \(\limsup_{n\to\infty}[\test(\pthton)-\realgrowth(n)]\geq 0\).
\revised{Intuitively, and analogously to computable randomness, this means that Sceptic should not be able to adopt a recursive (positive rational) betting strategy \(\test\in\comptests{\frcstsystem}=\schnorrtests{\frcstsystem}\) that allows her to get arbitrarily rich, but now at some computable rate.}
\begin{definition}\label{def:schnorr}
A path~\(\pth\in\pths\) is \s-random for a forecasting system~\(\frcstsystem\in\frcstsystems\) if no test supermartingale~\(\test\in\schnorrtests{\frcstsystem}\) is computably unbounded on~\(\pth\).
\end{definition}

Interestingly, and similarly to \(\co\)-randomness, we can replace the set of allowable betting strategies \(\schnorrtests{\frcstsystem}\) by the set of {\comp} test supermartingales for~\(\frcstsystem\), without changing the set of~\(\s\)-random paths for~\(\frcstsystem\).
That is, a path~\(\pth\in\pths\) is \(\s\)-random for a forecasting system~\(\frcstsystem\in\frcstsystems\) if and only if there is no {\comp} betting strategy that starts with unit capital, is allowed by~\(\frcstsystem\) and makes Sceptic arbitrarily rich along~\(\pth\) at some {\comp} rate; here too, we will implicitly use this equivalence when referring to results proven elsewhere.

\begin{proposition}
A path~\(\pth\in\pths\) is \s-random for a forecasting system~\(\frcstsystem\in\frcstsystems\) if and only if no {\comp} non-negative supermartingale~\(\supermartin\in\supermartins\) is computably unbounded on~\(\pth\).
\end{proposition}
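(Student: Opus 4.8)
The plan is to prove the two implications separately. The only nontrivial input I would use is the fact recalled in the paragraph immediately preceding the proposition: $\pth$ is \s-random if and only if no {\comp} test supermartingale for $\frcstsystem$ is computably unbounded on $\pth$ (equivalently, no {\comp} betting strategy starting with unit capital and allowed by $\frcstsystem$ makes Sceptic arbitrarily rich along $\pth$ at a {\comp} rate).

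The ``if'' direction is essentially immediate. Every recursive positive rational test supermartingale in $\schnorrtests{\frcstsystem}$ is, being recursive, {\comp}, and, being positive, non-negative; hence it is a {\comp} non-negative supermartingale in $\supermartins$. So if no such supermartingale is computably unbounded on $\pth$, then in particular no element of $\schnorrtests{\frcstsystem}$ is, and $\pth$ is \s-random by Definition~\ref{def:schnorr}.

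For the ``only if'' direction I would argue by contraposition. Suppose some {\comp} non-negative supermartingale $\supermartin\in\supermartins$ is computably unbounded on $\pth$, witnessed by a real growth function $\realgrowth$. The idea is to rescale $\supermartin$ into a {\comp} \emph{test} supermartingale that is still computably unbounded on $\pth$ and then invoke the recalled equivalence. Set $c\coloneqq\supermartin(\init)+1$; since $\supermartin$ is {\comp} and non-negative, $c$ is a {\comp} real with $c\geq1$ (the shift by $1$ is there precisely so that the division below is harmless even when $\supermartin(\init)=0$). Now put $N\coloneqq(\supermartin+1)/c$. I would check four things: (i) $N$ is a supermartingale for $\frcstsystem$, because adding the constant $1$ leaves every process difference unchanged, so $\adddelta N(\sit)=\adddelta\supermartin(\sit)/c$, and by non-negative homogeneity \ref{axiom:coherence:homogeneity} of $\uex_{\frcstsystem(\sit)}$ this still has non-positive upper expectation; (ii) $N\geq0$ and $N(\init)=1$, so $N$ is a test supermartingale; (iii) $N$ is {\comp}, being obtained from the {\comp} process $\supermartin$ by adding a constant and dividing by a {\comp} positive real; (iv) $N$ is computably unbounded on $\pth$ with witnessing growth function $\realgrowth/c$ (again {\comp}, non-negative, non-decreasing and unbounded since $c>0$), because $N(\pthton)-\realgrowth(n)/c=\bigl(\supermartin(\pthton)-\realgrowth(n)\bigr)/c+1/c$, whose $\limsup$ is at least $1/c>0$. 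The recalled equivalence then gives that $\pth$ is not \s-random, completing the contraposition.

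The load-bearing part is the recalled equivalence itself --- that \s-randomness can be tested with {\comp} test supermartingales rather than with the recursive positive rational ones appearing in Definition~\ref{def:schnorr}. This is the exact counterpart for \s-randomness of what is done for \co-randomness in \cite{sum2020extended}: one approximates a {\comp} test supermartingale from slightly below by recursive rationals while forfeiting only a summable amount of capital, adds a vanishing positive term to recover strict positivity, and verifies that computable unboundedness on $\pth$ is preserved (the forfeited capital stays bounded, so the relevant $\limsup$ decreases by at most a constant). Everything in the proposed argument beyond that ingredient is the routine rescaling described above, so I do not expect any further obstacle.
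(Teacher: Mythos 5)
Your proof is correct and follows essentially the same route as the paper's. The ``if'' direction is identical, and for the ``only if'' direction both arguments ultimately rest on the same external ingredient: the fact that a {\comp} (non-negative/test) supermartingale can be approximated by a recursive positive rational test supermartingale up to a bounded affine correction --- the paper cites this directly as \cite[Lemma 24]{sum2020extended}, which yields a recursive positive rational $\test$ and a rational $\alpha>0$ with $\abs{\alpha\test(\sit)-\supermartin(\sit)}\leq7$, and then adjusts the growth function to $\max\set{\nicefrac{(\realgrowth(n)-7)}{\alpha},0}$. One caution about your framing: the ``recalled equivalence'' in the paragraph preceding the proposition is precisely what this proposition formalises, so you cannot take it as an input without circularity; your closing paragraph resolves this by correctly identifying and sketching the approximation lemma that does the real work, but that sketch (not the ``recalled equivalence'') is what must carry the argument. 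Your extra normalisation step $N=(\supermartin+1)/(\supermartin(\init)+1)$ is harmless and checks out, though the paper shows it is unnecessary, since the approximation lemma applies directly to the {\comp} non-negative supermartingale.
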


\begin{proof}
% Checked by Gert
The reverse implication holds trivially since every recursive positive rational test supermartingale~\(\test \in\schnorrtests{\frcstsystem}\) is a {\comp} non-negative supermartingale for~\(\frcstsystem\) as well.

For the direct implication, we assume \emph{ex absurdo} that there is a {\comp} non-negative supermartingale~\(\supermartin\in\supermartins\) that is computably unbounded on~\(\pth\), meaning that there is a real growth function~\(\realgrowth\) such that \(\limsup_{n\to\infty}[\supermartin(\pthton)-\realgrowth(n)]\geq 0\).
By \cite[Lemma 24]{sum2020extended}, we know there is some recursive positive rational test supermartingale~\(\test\in\testsupermartins\) and a positive rational number~\(\alpha>0\) such that \(\abs{\alpha\test(\sit)-\supermartin(\sit)}\leq 7\) for all~\(\sit\in\sits\), and therefore also \(\nicefrac{(\supermartin(\sit)-7)}{\alpha}\leq\test(\sit)\) for all~\(\sit\in\sits\).
If we introduce the real growth function~\(\tilde{\realgrowth}\) defined by~\(\tilde{\realgrowth}(n)\coloneqq\max\set{\nicefrac{(\realgrowth(n)-7)}{\alpha},0}\) for all~\(n\in\naturalswithzero\), then it readily follows from \(\limsup_{n\to\infty}[\supermartin(\pthton)-\realgrowth(n)]\geq 0\) that \(\limsup_{n\to\infty}[\test(\pthton)-\tilde{\realgrowth}(n)]\geq 0\).
Hence, \(\test\in\schnorrtests{\frcstsystem}\) is computably unbounded on~\(\pth\), contradicting the assumption that \(\pth\) is \(\s\)-random for~\(\frcstsystem\).
\end{proof}

If the forecasting system~\(\frcstsystem\) is stationary in any of the above randomness notions \(\random\), that is, if \(\frcstsystem(\sit)=I\) for all~\(\sit\in\sits\), then we will simply say that a path~\(\pth\in\pths\) is \random-random for the interval forecast~\(I\), instead of saying that it is \random-random for the stationary forecasting system~\(\frcstsystem\).

We refer the reader to~\cite{CoomanBock2017,CoomanBock2021} for more information about these imprecise-probabilistic notions of randomness, and here mention only those results that are relevant to our present purposes.
Let us first mention that Definitions~\ref{def:notionsofrandomness} and~\ref{def:schnorr} are meaningful, in the sense that every forecasting system~\(\frcstsystem\in\frcstsystems\) has at least one \random-random path, with~\(\random\in\set{\ml,\wml,\co,\s}\).

\begin{proposition}[{\cite[Corollary 20]{CoomanBock2021}}]\label{prop:existence}
For any~\(\random\in\set{\ml,\wml,\co,\s}\) and any forecasting system~\(\frcstsystem\in\frcstsystems\), there is at least one path~\(\pth\in\pths\) that is \random-random for~\(\frcstsystem\).
\end{proposition}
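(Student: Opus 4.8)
The plan is to prove this by a direct diagonalization/compactness argument that works uniformly for all four notions of randomness at once, exploiting the fact that in every case the set of allowed betting strategies is countable. First I would observe that, for any $\random\in\set{\ml,\wml,\co,\s}$, the relevant set $\overline{\mathbb{T}}_\random(\frcstsystem)$ is a subset of $\mathscr{F}_\random$, and each $\mathscr{F}_\random$ is countably infinite (as noted at the end of Section~\ref{sec:comp}); so it suffices to enumerate this countable family of test supermartingales as $\test_1,\test_2,\dots$ and then to construct, step by step, a path $\pth\in\pths$ on which none of them is unbounded (respectively computably unbounded, in the Schnorr case). The key structural fact I would invoke is that for a test supermartingale $\test$ for $\frcstsystem$, in any situation $\sit$ we have $\uex_{\frcstsystem(\sit)}(\adddelta\test(\sit))\leq0$, which by the definition of $\uex$ and nonnegative-homogeneity/constant-additivity forces $\min_{x\in\set{0,1}}\test(\sit x)\leq\test(\sit)$: whatever Forecaster's interval $\frcstsystem(\sit)$ is, at least one of the two successor outcomes does not increase Sceptic's capital for that supermartingale. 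Hence Reality always has a move that keeps a given test supermartingale from growing.

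The construction itself would be the standard merging trick. I would build $\pth$ so that it controls $\test_1,\test_2,\dots$ one at a time: reserve a block of time instants for $\test_1$ during which Reality always picks the non-increasing successor, then a block for $\test_2$, and so on; but since one must keep controlling $\test_1$ forever (not just once), the cleaner approach is to work with the single test supermartingale $\test\coloneqq\sum_{i\geq1}2^{-i}\test_i$, which is again a (nonnegative, unit-initial) supermartingale for $\frcstsystem$ because the defining inequality $\uex_{\frcstsystem(\sit)}(\cdot)\leq0$ is preserved under nonnegative linear combinations by \ref{axiom:coherence:homogeneity} and \ref{axiom:coherence:subsupadditivity} (together with a monotone-convergence argument for the infinite sum, using that all $\test_i$ are nonnegative and the partial sums increase). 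Then I would choose $\pth$ recursively by always letting $\pthatnplus$ be an outcome $x$ with $\test(\pthton x)\leq\test(\pthton)$; such an $x$ exists by the observation above. This yields $\test(\pthton)\leq\test(\init)=1$ for all $n$, so every $\test_i\leq2^i\test$ is bounded by $2^i$ along $\pth$, whence no $\test_i$ is unbounded on $\pth$. For $\random\in\set{\ml,\wml,\co\}$ this is exactly the required non-unboundedness; for $\random=\s$ a bounded test supermartingale is a fortiori not computably unbounded (take any real growth function $\realgrowth$: since $\limsup_n[\test_i(\pthton)-\realgrowth(n)]\leq 2^i-\lim_n\realgrowth(n)=-\infty<0$), so the same $\pth$ works.

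One technical point to handle carefully is that $\test=\sum_{i}2^{-i}\test_i$ may take the value $+\infty$ in some situations if the $\test_i$ are unbounded as a family; to avoid this I would instead use a uniformly bounded rescaling, e.g. replace each $\test_i$ by $\test_i/\test_i$-capped or, more simply, note that along the path we only ever need finitely much information, so at stage $n$ it suffices to control the finite sum $\test^{(n)}\coloneqq\sum_{i=1}^{n}2^{-i}\test_i$, which is a genuine real-valued supermartingale, pick $\pthatnplus$ so that $\test^{(n)}(\pthton\,\cdot)$ does not increase there, and then verify by a limiting argument that each individual $\test_i$ stays bounded along $\pth$. Either bookkeeping route works; the main obstacle is simply to present the merging/limit argument cleanly so that the chosen path simultaneously defeats all countably many strategies. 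This is a fairly routine Kolmogorov-style argument, and indeed it is essentially the proof given in \cite[Corollary 20]{CoomanBock2021}; since that reference is cited in the statement, I would in the actual write-up either reproduce this short argument or simply defer to it.
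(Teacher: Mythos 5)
Your overall strategy---diagonalising against the countably many implementable test supermartingales by greedily choosing, at each stage, a successor on which a single combined supermartingale does not increase---is sound, and it is a genuinely different route from the paper's: the paper gives no proof of Proposition~\ref{prop:existence} at all (it defers to the cited Corollary~20 of \cite{CoomanBock2021}), and the analogous existence result it \emph{does} prove, Corollary~\ref{cor:Wald:at:least:one}, goes through the almost-sure machinery of Section~\ref{sec:almostsure} (each defeating event is almost sure, countable intersections of almost sure events are almost sure, almost sure events are non-empty). Your key structural observation is correct: by \ref{axiom:coherence:bounds}, \(\uex_{\frcstsystem(\sit)}(\adddelta\test(\sit))\leq0\) forces \(\min_{x\in\set{0,1}}\test(\sit x)\leq\test(\sit)\), and the reduction of the Schnorr case to plain boundedness is fine.

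The genuine gap is in how you handle the possible divergence of \(\test\coloneqq\sum_{i\geq1}2^{-i}\test_i\), and in particular your claim that ``either bookkeeping route works''. The partial-sum route does \emph{not} work: writing \(\test^{(n)}\coloneqq\sum_{i=1}^{n}2^{-i}\test_i\) and telescoping \(\test^{(n+1)}(\pthto{n+1})\leq\test^{(n+1)}(\pthton)=\test^{(n)}(\pthton)+2^{-(n+1)}\test_{n+1}(\pthton)\) only yields \(\test^{(n+1)}(\pthto{n+1})\leq\sum_{k=1}^{n+1}2^{-k}\test_k(\pthto{k-1})\), and because forecasts may touch \(0\) or \(1\) there is no bound on \(\test_k(\sit)\) in terms of \(\abs{\sit}\) alone (with forecast \(0\) in \(\init\), for instance, \(\test_k(1)\) is completely unconstrained), so these increments need not be summable and no individual \(\test_i\) is controlled along \(\pth\). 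Your first route is the right one, but the infinite values should be handled rather than avoided: the extended-real-valued sum \(\test\) still satisfies \(\ex_p(\test(\sit\,\bullet))\leq\test(\sit)\) for every \(p\in\frcstsystem(\sit)\) by monotone convergence of the increasing partial sums, and from \(\overline{p}\,\test(\sit1)+(1-\overline{p})\test(\sit0)\leq\test(\sit)<\infty\) one always gets a child \(x\) with \(\test(\sit x)\leq\test(\sit)\) (both children finite if \(\overline{p}\in(0,1)\); the child \(0\) if \(\overline{p}=0\); the child \(1\) if \(\overline{p}=1\)), so the greedy path keeps \(\test(\pthton)\leq1\) and hence \(\test_i(\pthton)\leq2^{i}\) for all \(n\). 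Alternatively, renormalise before summing: with \(c_i\coloneqq\max\set{1,\max_{\abs{\sit}\leq i}\test_i(\sit)}\), the process \(\sum_{i}2^{-i}\test_i/c_i\) is finite everywhere and real-valued, and the rest of your argument goes through verbatim. With either of these repairs the proof is correct.
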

\revised{
Conversely, there is for every path $\pth\in\pths$ at least one forecasting system $\frcstsystem\in\frcstsystems$ for which it is \random-random, with $\random\in\set{\ml,\wml,\co,\s}$. The reason is that all paths are $\random$-random with respect to the vacuous forecasting system $\sqgroup{0,1}$. To understand why this perhaps surprising result holds, it suffices to realise that the supermartingales that correspond to $\sqgroup{0,1}$ can never increase. These betting strategies therefore do not allow Skeptic to increase her capital, let alone become arbitrarily rich.}
\begin{proposition}[{\cite[Proposition 25]{CoomanBock2021}}] \label{prop:vacuous}
Consider any~\(\random\in\set{\ml,\wml,\co,\s}\).
Then any path \(\pth\in\pths\) is \random-random for the vacuous forecasting system \(\sqgroup{0,1}\).%, so \(\pths_{\random}(\sqgroup{0,1})=\pths\).
\end{proposition}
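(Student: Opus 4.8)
The plan is to cash in on the observation, already hinted at in the text, that the vacuous forecasting system permits Sceptic only betting strategies that can never gain: under $\sqgroup{0,1}$ every supermartingale is non-increasing along every path, so every test supermartingale stays bounded above by its initial capital $1$, and such a strategy is neither unbounded nor computably unbounded on any path whatsoever.

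First I would identify the upper expectation attached to the interval forecast $\sqgroup{0,1}$. Substituting $\underline{p}=0$ and $\overline{p}=1$ in Equation~\eqref{eq:upperex} and recalling Equation~\eqref{eq:ex}, one gets $\uex_{\sqgroup{0,1}}(\gamble)=\max\set{\ex_0(\gamble),\ex_1(\gamble)}=\max\set{\gamble(0),\gamble(1)}$ for every gamble $\gamble$. Hence, for any supermartingale $\supermartin$ for $\sqgroup{0,1}$ and any situation $\sit\in\sits$, the defining inequality $\uex_{\sqgroup{0,1}}(\adddelta\supermartin(\sit))\leq0$ reads $\max\set{\adddelta\supermartin(\sit)(0),\adddelta\supermartin(\sit)(1)}\leq0$, which, since $\supermartin(\sit x)=\supermartin(\sit)+\adddelta\supermartin(\sit)(x)$, is the same as $\supermartin(\sit x)\leq\supermartin(\sit)$ for both $x\in\set{0,1}$.

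Next I would propagate this inequality along an arbitrary path $\pth\in\pths$: using $\pthto{0}=\init$ and $\pthtonplus=\pthton\pthatnplus$, a trivial induction on $n$ yields $\supermartin(\pthton)\leq\supermartin(\init)$ for all $n\in\naturalswithzero$. Applying this to a test supermartingale $\test$, for which $\test(\init)=1$, gives $\test(\pthton)\leq1$ for all $n\in\naturalswithzero$, hence $\limsup_{n\to\infty}\test(\pthton)\leq1<\infty$. Since for each $\random\in\set{\ml,\wml,\co}$ every allowed betting strategy in $\randomtests{\sqgroup{0,1}}$ is in particular a test supermartingale for $\sqgroup{0,1}$, no such strategy is unbounded on $\pth$, so $\pth$ is $\random$-random for $\sqgroup{0,1}$ by Definition~\ref{def:notionsofrandomness}.

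Finally, for $\s$-randomness the very same bound $\test(\pthton)\leq1$ also rules out computable unboundedness: for any real growth function $\realgrowth$ one has $\lim_{n\to\infty}\realgrowth(n)=\infty$, so $\test(\pthton)-\realgrowth(n)\to-\infty$ and therefore $\limsup_{n\to\infty}[\test(\pthton)-\realgrowth(n)]=-\infty<0$; hence no $\test\in\schnorrtests{\sqgroup{0,1}}$ is computably unbounded on $\pth$, and $\pth$ is $\s$-random for $\sqgroup{0,1}$ by Definition~\ref{def:schnorr}. The one substantive ingredient here is the identity $\uex_{\sqgroup{0,1}}=\max$; everything that follows is bookkeeping, so I do not anticipate a genuine obstacle.
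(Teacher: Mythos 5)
Your proof is correct and follows exactly the reasoning the paper itself sketches just before the statement (the paper cites the result from elsewhere, but explains it by noting that supermartingales for \(\sqgroup{0,1}\) can never increase, so test supermartingales stay bounded by \(1\)); it also mirrors the argument the paper gives in full for Proposition~\ref{prop:all}. Nothing is missing, including the separate treatment of computable unboundedness for \(\s\)-randomness.
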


Moreover, any path~\(\pth\in\pths\) that is \random-random for~\(\frcstsystem\in\frcstsystems\) is also \random-random for any forecasting system~that is less informative---or more conservative---than~\(\frcstsystem\).

\begin{proposition}[{\cite[Propositions 10 and 18]{CoomanBock2021}}]\label{prop:nestedfrcstsystems} %Proposition 10 and 18]
Consider any~\(\random\in\set{\ml,\wml,\co,\s}\) and any two forecasting systems \(\frcstsystem,\frcstsystem^*\in\frcstsystems\) such that \(\frcstsystem\subseteq\frcstsystem^*\).
Then any path~\(\pth\in\pths\) that is \random-random for~\(\frcstsystem\) is also \random-random for~\(\frcstsystem^*\).
\end{proposition}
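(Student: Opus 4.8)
The plan is to reduce the statement to a one-line monotonicity property of upper expectations, together with the observation that the implementability classes $\mathscr{F}_\random$ do not refer to the forecasting system at all. First I would record that, by~\eqref{eq:upperex}, for any interval forecasts $I\subseteq I^*$ in $\intervals$ and any gamble $\gamble\in\gambles$ we have $\uex_I(\gamble)=\max_{p\in I}\ex_p(\gamble)\leq\max_{p\in I^*}\ex_p(\gamble)=\uex_{I^*}(\gamble)$, simply because a maximum over a larger set is at least as large. Applying this in every situation $\sit\in\sits$ with $I=\frcstsystem(\sit)$ and $I^*=\frcstsystem^*(\sit)$---which is legitimate since $\frcstsystem\subseteq\frcstsystem^*$---yields $\uex_{\frcstsystem(\sit)}(\gamble)\leq\uex_{\frcstsystem^*(\sit)}(\gamble)$ for all $\sit\in\sits$ and all $\gamble\in\gambles$. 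Equivalently, by Proposition~\ref{prop:from:I:to:E:andback}, the cone of gambles Sceptic may take in $\sit$ against $\frcstsystem^*$ is contained in the cone she may take in $\sit$ against $\frcstsystem$.

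Next I would use this to compare the relevant sets of betting strategies. If $\supermartin$ is a supermartingale for $\frcstsystem^*$, then $\uex_{\frcstsystem(\sit)}(\adddelta\supermartin(\sit))\leq\uex_{\frcstsystem^*(\sit)}(\adddelta\supermartin(\sit))\leq 0$ for all $\sit\in\sits$, so $\supermartin$ is also a supermartingale for $\frcstsystem$; in particular, $\testsupermartins[\frcstsystem^*]\subseteq\testsupermartins[\frcstsystem]$. Since, for each $\random\in\set{\ml,\wml,\co,\s}$, the allowed class $\randomtests{\frcstsystem}$ is by definition the intersection of $\testsupermartins$ with an implementability class $\mathscr{F}_\random$ that does not depend on the forecasting system, it follows at once that $\randomtests{\frcstsystem^*}\subseteq\randomtests{\frcstsystem}$.

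Finally I would conclude by contraposition on the randomness definitions, uniformly in $\random$. Suppose $\pth\in\pths$ is $\random$-random for $\frcstsystem$, and let $\test\in\randomtests{\frcstsystem^*}$ be arbitrary. By the inclusion just established, $\test\in\randomtests{\frcstsystem}$, so by $\random$-randomness of $\pth$ for $\frcstsystem$ (Definition~\ref{def:notionsofrandomness} for $\random\in\set{\ml,\wml,\co}$, Definition~\ref{def:schnorr} for $\random=\s$) the test supermartingale $\test$ is not unbounded on $\pth$, respectively not computably unbounded on $\pth$. As $\test$ was arbitrary, no test supermartingale in $\randomtests{\frcstsystem^*}$ is unbounded (resp.\ computably unbounded) on $\pth$, which is exactly the statement that $\pth$ is $\random$-random for $\frcstsystem^*$.

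I do not anticipate a genuine obstacle: the content is the elementary remark that narrowing Forecaster's interval only enlarges the set of gambles he offers, hence only enlarges the set of (test) supermartingales, while the computability side of each definition is completely orthogonal to the forecasting system. The only point requiring a moment's attention is that the argument be uniform across the four notions---that it cover the ``$\limsup=\infty$'' requirement of \ml-, \wml- and \co-randomness and the ``computably unbounded'' requirement of \s-randomness in one stroke---which it does, because in both cases a strategy witnessing non-randomness of $\pth$ for $\frcstsystem^*$ is a fortiori a legal such strategy for $\frcstsystem$.
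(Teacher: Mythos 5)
Your argument is correct: the monotonicity $\uex_{\frcstsystem(\sit)}\leq\uex_{\frcstsystem^*(\sit)}$ gives $\testsupermartins[\frcstsystem^*]\subseteq\testsupermartins[\frcstsystem]$, the implementability classes $\mathscr{F}_\random$ are independent of the forecasting system, and the conclusion follows for all four notions at once. The paper does not prove this proposition itself but imports it from \cite{CoomanBock2021}; your proof is the natural argument and matches the one given there in essence, so there is nothing to add.
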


When we keep the forecasting system~\(\frcstsystem\) fixed, there is also an ordering on our four martingale-theoretic notions of randomness.
To describe this ordering, we introduce, for every \(\random\in\set{\ml,\wml,\co,\s}\) and every forecasting system~\(\frcstsystem\), the corresponding set of \random-random paths~\(\pths_\random(\frcstsystem)\coloneqq\cset{\pth\in\pths}{\pth\text{ is \random-random for }\frcstsystem}\).

\begin{proposition}[{\cite[Section 6]{CoomanBock2021}}]
For any forecasting system~\(\frcstsystem\), \(\pths_\ml(\frcstsystem)\subseteq\pths_\wml(\frcstsystem)\subseteq\pths_\co(\frcstsystem)\subseteq\pths_\s(\frcstsystem)\).
\end{proposition}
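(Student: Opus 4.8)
The plan is to derive all three inclusions from the nesting $\schnorrtests{\frcstsystem}=\comptests{\frcstsystem}\subseteq\weakmltests{\frcstsystem}\subseteq\mltests{\frcstsystem}$ of the allowed sets of betting strategies that was recorded earlier, combined with the definitions of the four randomness notions. No genuinely new idea is needed; the argument is essentially bookkeeping.

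For the first two inclusions, $\pths_\ml(\frcstsystem)\subseteq\pths_\wml(\frcstsystem)$ and $\pths_\wml(\frcstsystem)\subseteq\pths_\co(\frcstsystem)$, I would argue directly from Definition~\ref{def:notionsofrandomness}. Fix a path $\pth\in\pths_\ml(\frcstsystem)$; then no $\test\in\mltests{\frcstsystem}$ is unbounded on $\pth$. Since $\weakmltests{\frcstsystem}\subseteq\mltests{\frcstsystem}$, it follows a fortiori that no $\test\in\weakmltests{\frcstsystem}$ is unbounded on $\pth$, so $\pth$ is \wml-random, i.e.\ $\pth\in\pths_\wml(\frcstsystem)$. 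The identical reasoning, now using $\comptests{\frcstsystem}\subseteq\weakmltests{\frcstsystem}$, shows that every \wml-random path for $\frcstsystem$ is \co-random for $\frcstsystem$.

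For the remaining inclusion $\pths_\co(\frcstsystem)\subseteq\pths_\s(\frcstsystem)$ one extra observation is needed, because \co- and \s-randomness rely on the same strategy set $\comptests{\frcstsystem}=\schnorrtests{\frcstsystem}$ but on different failure conditions, namely unboundedness versus computable unboundedness. The key remark is that computable unboundedness implies unboundedness: if $\test$ is computably unbounded on $\pth$, then there is a real growth function $\realgrowth$ with $\limsup_{n\to\infty}[\test(\pthton)-\realgrowth(n)]\geq0$, and since $\lim_{n\to\infty}\realgrowth(n)=\infty$ this forces $\limsup_{n\to\infty}\test(\pthton)=\infty$, i.e.\ $\test$ is unbounded on $\pth$. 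Hence, if $\pth\in\pths_\co(\frcstsystem)$, so that no $\test\in\comptests{\frcstsystem}$ is unbounded on $\pth$, then in particular no $\test\in\schnorrtests{\frcstsystem}=\comptests{\frcstsystem}$ is computably unbounded on $\pth$, which is exactly the condition in Definition~\ref{def:schnorr} for $\pth$ to be \s-random. This gives $\pths_\co(\frcstsystem)\subseteq\pths_\s(\frcstsystem)$ and completes the chain.

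I do not expect a real obstacle: the only step requiring a moment's thought is the implication ``computably unbounded $\Rightarrow$ unbounded'' used in the last inclusion, and even that is immediate from $\realgrowth(n)\to\infty$; everything else is an application of the already established inclusions $\schnorrtests{\frcstsystem}=\comptests{\frcstsystem}\subseteq\weakmltests{\frcstsystem}\subseteq\mltests{\frcstsystem}$.
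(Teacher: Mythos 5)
Your proof is correct and is the natural argument: the first two inclusions follow directly from the nesting \(\weakmltests{\frcstsystem}\subseteq\mltests{\frcstsystem}\) and \(\comptests{\frcstsystem}\subseteq\weakmltests{\frcstsystem}\) together with Definition~\ref{def:notionsofrandomness}, and the last one from the (correct) observation that computable unboundedness implies unboundedness because \(\realgrowth(n)\to\infty\). The paper itself only cites this result from \cite[Section 6]{CoomanBock2021} without reproducing a proof, and your bookkeeping argument is exactly the intended one.
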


Thus, if a path~\(\pth\in\pths\) is \ml-random for a forecasting system~\(\frcstsystem\), then it is also \wml-, \co- and \s-random for~\(\frcstsystem\).
Consequently, for any given forecasting system~\(\frcstsystem\), it is for example easier for a path~\(\pth\in\pths\) to be \s-random than for it to be \ml-random.
This makes us say that \ml-randomness is a \emph{stronger} notion of randomness than \wml-, \co- and \s-randomness.
Conversely, we say that that \s-randomness is a \emph{weaker} notion of randomness than \co-, \wml- and \ml-randomness.

\section{A remarkable equivalence: non-stationary precise forecasting systems vs interval forecasts}\label{sec:difference}
For didactic reasons, we will now put a temporal halt to our rather technical but necessary introduction of mathematical concepts.
After all, now that we have these four martingale-theoretic notions of randomness and some of their properties at our disposal, we can start to address our central question in this paper: How does allowing for imprecision change our understanding of random sequences?
Is it for example `easier' to capture the randomness of some paths by imprecise forecasting systems? 
Are there paths whose randomness can only be described by imprecise uncertainty models?

To start the discussion, we fix any~\(\random\in\set{\ml,\wml,\co,\s}\) and recall from Proposition~\ref{prop:nestedfrcstsystems} that if a path~\(\pth\in\pths\) is \random-random for some precise forecasting system~\(\frcstsystem\in\frcstsystems\), then it is also \random-random for any forecasting system that is less informative.
Hence, in particular, for any real numbers~\(p,q\in\sqgroup{0,1}\) such that \(p<q\), if \(\pth\) is \random-random for the forecasting system~\(\frcstsystem_{p,q}\), defined by
\begin{align*}
\frcstsystem_{p,q}(\sit)\coloneqq\begin{cases}
p&\text{if \(\abs{\sit}\) is odd}\\
q&\text{if \(\abs{\sit}\) is even}
\end{cases}
\text{ for all }\sit\in\sits,
\end{align*}
then it is also \random-random for the interval forecast~\(\intervalpq\).
So we see that \(\intervalpq\) can be used as a simpler---because stationary---yet imprecise alternative for~\(\frcstsystem_{p,q}\).
In many cases, this procedure of replacing a precise non-stationary forecasting system by a stationary imprecise one will result in a larger set of \random-random paths, and therefore lead to a less informative description of the \(\random\)-randomness associated with~\(\pth\).
However, interval forecasts do not merely serve as an alternative for non-stationary precise forecasts.
Indeed, as was shown by De Cooman and De Bock \cite[Section 10]{CoomanBock2021}, there are paths~\(\pth\) that are \random-random for~\(\intervalpq\), but not \random-random for any (more) precise (possibly non-stationary) {\comp} forecasting system~\(\frcstsystem\).

\begin{theorem}[{\cite[Theorem 37]{CoomanBock2021}}]\label{the:R:inherently:imprecise}
Consider any~\(\random\in\set{\ml,\wml,\co,\s}\) and any interval forecast~\(\intervalpq\in\intervals\).
Then there is a path~\(\pth\in\pths\) that is \random-random for the interval forecast~\(\intervalpq\), but that is never \random-random for any {\comp} forecasting system~\(\frcstsystem\in\frcstsystems\) whose highest imprecision is smaller than that of~\(\intervalpq\), in the specific sense that \(\sup_{\sit\in\sits}[\ufrcstsystem(\sit)-\lfrcstsystem(\sit)]<q-p\).
\end{theorem}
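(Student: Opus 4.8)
The plan is to fix $\random\in\set{\ml,\wml,\co,\s}$ and $\intervalpq\in\intervals$ with $w\coloneqq q-p>0$, and to obtain $\pth$ as an $\random$-random path for a suitable \emph{precise} forecasting system $\Psi\in\frcstsystems$ that takes all its values inside $\intervalpq$. Such a $\pth$ exists by Proposition~\ref{prop:existence}---crucially, no computability of $\Psi$ is needed---and since the stationary forecasting system $\intervalpq$ is then less informative than $\Psi$, Proposition~\ref{prop:nestedfrcstsystems} immediately gives that $\pth$ is $\random$-random for $\intervalpq$. All the work lies in designing $\Psi$ so that $\pth$ is moreover \emph{not} $\random$-random for any computable forecasting system $\frcstsystem$ with $\sup_{\sit\in\sits}[\ufrcstsystem(\sit)-\lfrcstsystem(\sit)]<w$.

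To that end I would enumerate---non-effectively, there being only countably many---all such forecasting systems $\frcstsystem_1,\frcstsystem_2,\dots$, fix for each $m$ a rational $\epsilon_m>0$ with $\sup_{\sit}[\ufrcstsystem_m(\sit)-\lfrcstsystem_m(\sit)]<w-\epsilon_m$, and fix a partition of $\naturalswithzero$ into infinitely many infinite recursive sets $J_1,J_2,\dots$. For a situation $\sit$ with $\abs{\sit}\in J_m$, I define $\Psi(\sit)$ by running a fixed dovetailing procedure that races the two semidecidable statements ``$\ufrcstsystem_m(\sit)<q-\epsilon_m/3$'' and ``$\lfrcstsystem_m(\sit)>p+\epsilon_m/3$''; at least one of them holds (otherwise $\ufrcstsystem_m(\sit)-\lfrcstsystem_m(\sit)\geq w-2\epsilon_m/3>w-\epsilon_m$), and whichever is confirmed first makes us set $\Psi(\sit)\coloneqq q$ resp.\ $\Psi(\sit)\coloneqq p$. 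In both cases $\Psi(\sit)\in\set{p,q}\subseteq\intervalpq$, and, crucially, $\Psi(\sit)$ lies at distance at least $\epsilon_m/3$ \emph{outside} the interval $\frcstsystem_m(\sit)$, on a side that the dovetailing has just identified. Against each $\frcstsystem_m$ I then construct a recursive positive rational test supermartingale $\test_m\in\comptests{\frcstsystem_m}=\schnorrtests{\frcstsystem_m}\subseteq\weakmltests{\frcstsystem_m}\subseteq\mltests{\frcstsystem_m}$, generated by a multiplier process that equals the constant $1$ at all times outside $J_m$ and, at a situation $\sit$ with $\abs{\sit}\in J_m$, runs the \emph{same} dovetailing procedure and then bets ``towards $\Psi(\sit)$'' with a small fixed rational step $\lambda$---for instance the multiplier $x\mapsto1+\lambda(x-\hat b)$ with a rational $\hat b$ slightly above $\ufrcstsystem_m(\sit)$ when $\Psi(\sit)=q$, and symmetrically when $\Psi(\sit)=p$. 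A direct verification using \eqref{eq:ex}--\eqref{eq:upperex} shows that each such multiplier is a nonnegative gamble $g$ with $\uex_{\frcstsystem_m(\sit)}(g)\leq1$ (so $\test_m$ really is an allowed and computable betting strategy against $\frcstsystem_m$) while $\ex_{\Psi(\sit)}(g)\geq1+\lambda\epsilon_m/4$.

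Because $\test_m$ and the definition of $\Psi$ invoke the identical deterministic procedure with the identical rational parameter $\epsilon_m$, they never disagree about the betting direction, so $\test_m$ is a multiplicative betting strategy whose conditional expected log-gain under $\Psi$ is bounded and uniformly positive at every time in $J_m$. A strong-law-of-large-numbers argument---valid on $\random$-random paths, and legitimate here because the restriction of $\Psi$ to the $J_m$-times is itself computable for each fixed $m$, so the associated test supermartingale for $\Psi$ that rules out a law-of-large-numbers violation is computable---then yields $\test_m(\pthton)\to+\infty$, in fact fast enough to dominate the recursive growth function $n\mapsto\abs{J_m\cap\set{0,\dots,n-1}}$, which is exactly what the Schnorr case additionally needs. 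Hence $\pth$ is not $\random$-random for $\frcstsystem_m$ for any $m$, and the theorem follows. I expect the real difficulty to be precisely this last coordination: the test supermartingale must be \emph{computable} and cannot know $\epsilon_m$, nor tell which side of $\frcstsystem_m(\sit)$ the forecast $\Psi(\sit)$ is on when both sides are close; the way out is to let $\Psi$ itself be \emph{defined through} the same algorithm the test supermartingale runs---so $\Psi$ is non-computable as a whole, yet the two ``agents'' stay in lockstep---while making sure the remaining bookkeeping (rational approximations of $\ufrcstsystem_m$ and $\lfrcstsystem_m$, positivity of the multipliers, and the growth-rate estimate for Schnorr randomness) goes through.
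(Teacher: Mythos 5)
First, a point of reference: the paper does not prove this statement at all---it is imported verbatim as Theorem~37 of De Cooman and De Bock's earlier work---so there is no in-paper proof to measure yours against. Judged on its own terms, your architecture is plausible: diagonalise against the countably many {\comp} forecasting systems of imprecision below \(q-p\) on disjoint infinite recursive time sets \(J_m\), define the precise system \(\Psi\) with values in \(\set{p,q}\) through the very procedure each test supermartingale \(\test_m\) runs (so the two stay in lockstep), and get the positive half from Propositions~\ref{prop:existence} and~\ref{prop:nestedfrcstsystems}. You also correctly identify the coordination between \(\Psi\) and \(\test_m\) as the crux. (Do dispose separately of the degenerate case \(p=q\), where the second assertion is vacuous.)

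There are, however, two genuine gaps, both traceable to the same oversight: \(p\) and \(q\) are arbitrary reals in \(\sqgroup{0,1}\) and need not be {\comp}. First, the predicates you race, ``\(\overline{\varphi}_m(\sit)<q-\epsilon_m/3\)'' and ``\(\underline{\varphi}_m(\sit)>p+\epsilon_m/3\)'', are not semidecidable when \(q\), respectively \(p\), is non-{\comp}, so the dovetailing is not effective and the supermartingale \(\test_m\) that ``runs the same procedure'' is not recursive. This is repairable: hard-code, non-uniformly in \(m\), rationals \(c_m\in\group{p,p+\epsilon_m/2}\) and \(d_m\in\group{q-\epsilon_m/2,q}\) and race ``\(\overline{\varphi}_m(\sit)<d_m\)'' against ``\(\underline{\varphi}_m(\sit)>c_m\)''; at least one still holds, and the separations \(q-d_m\) and \(c_m-p\) are fixed positive constants for each \(m\). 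Second, and more seriously, your law-of-large-numbers step rests on a false premise: the restriction of \(\Psi\) to the \(J_m\)-times is \emph{not} {\comp} in general, since its values are the possibly non-{\comp} reals \(p\) and \(q\) (only the \emph{choice} between them is decidable), so you cannot simply cite a ``randomness implies SLLN'' result for {\comp} forecasting systems. The argument can still be saved, but for a more specific reason: one must explicitly exhibit an allowed, implementable test supermartingale for the non-{\comp} \(\Psi\) that becomes (computably) unbounded whenever \(\log\test_m(\pthton)\) fails to grow linearly in \(\abs{J_m\cap\sqgroup{0,n}}\), and this is possible only because such a supermartingale needs merely a hard-coded rational \emph{lower bound} \(c>0\) on \(\ex_{\Psi(\sit)}(\log g_\sit)\)---which exists, since a second-order expansion gives roughly \(\lambda(q-d_m)-\lambda^2>0\)---rather than the exact values of \(p\) and \(q\). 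Until that supermartingale is constructed and checked to be allowed for \(\Psi\), positive, of the right implementability class for each of the four notions, and computably unbounded at a recursive rate (as the Schnorr case demands on both sides of the argument), the proof is incomplete at precisely the point you yourself flag as the difficulty.
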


Theorem~\ref{the:R:inherently:imprecise} led them to claim that \(\random\)-randomness is inherently imprecise, because the randomness of the paths~\(\pth\) in Theorem~\ref{the:R:inherently:imprecise} can only be captured by an imprecise forecasting system.
The following corollary, which is essentially a less technical formulation of our main result---Theorem~\ref{theorem:whatsthedifference} further on in Section~\ref{sec:proof}---shows that the assumption that \(\frcstsystem\) is {\comp} is crucial for this claim: indeed, Corollary~\ref{cor:textie} shows that there is a precise forecasting system~\(\frcstsystem\)---so with \(\sup_{\sit\in\sits}\sqgroup{\ufrcstsystem(\sit)-\lfrcstsystem(\sit)}=0\)---such that \(\pth\) is \random-random for~\(\intervalpq\) if and only if it is \random-random for~\(\frcstsystem\).
Hence, for this particular \(\frcstsystem\), there is no path~\(\pth\in\pths\) that is \random-random for~\(\sqgroup{p,q}\) but not for~\(\frcstsystem\).
We postpone an exact formulation and proof of Theorem~\ref{theorem:whatsthedifference} to Section~\ref{sec:proof}, since that requires even more mathematical technicalities.

\begin{corollary}\label{cor:textie}
Consider any~\(\random\in\set{\ml,\wml,\co,\s}\) and any interval forecast~\(\intervalpq\in\intervals\) with~\(p<q\).
Then there is a precise forecasting system~\(\frcstsystem\in\frcstsystems\), with \(\frcstsystem\subseteq \sqgroup{p,q}\), such that any path~\(\pth\in\pths\) is \random-random for~\(\frcstsystem\) if and only if it is \random-random for~\(\intervalpq\).
\end{corollary}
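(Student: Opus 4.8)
The plan is to read the corollary off the sharper Theorem~\ref{theorem:whatsthedifference} of Section~\ref{sec:proof}, which for every \(\intervalpq\in\intervals\) with \(p<q\) and every \(\random\in\set{\ml,\wml,\co,\s}\) exhibits an explicit precise forecasting system whose set of \(\random\)-random paths is exactly \(\pths_\random(\intervalpq)\); taking this \(\frcstsystem\) gives the statement at once. So I will describe how I would set up and prove the underlying equivalence \(\pths_\random(\frcstsystem)=\pths_\random(\intervalpq)\) for a suitable precise \(\frcstsystem\) with \(\frcstsystem\subseteq\intervalpq\).

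One inclusion comes for free. Since \(\frcstsystem\) takes all its values inside \(\intervalpq\), we have \(\frcstsystem\subseteq\intervalpq\), and Proposition~\ref{prop:nestedfrcstsystems} gives \(\pths_\random(\frcstsystem)\subseteq\pths_\random(\intervalpq)\): a path that is \(\random\)-random for the more informative \(\frcstsystem\) is a fortiori \(\random\)-random for the more conservative \(\intervalpq\). All the difficulty is in the converse inclusion, i.e.\ in showing that every \(\pth\in\pths\) that is \(\random\)-random for \(\intervalpq\) is also \(\random\)-random for \(\frcstsystem\). Contrapositively: any implementable betting strategy that \(\frcstsystem\) allows and that gets (computably) unbounded on \(\pth\) must be matched by one that the strictly more restrictive \(\intervalpq\) allows and that is (computably) unbounded on \(\pth\) as well --- the extra betting power that the preciseness of \(\frcstsystem\) grants Sceptic has to be unusable on \(\intervalpq\)-random paths.

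To secure this I would build \(\frcstsystem\) using the machinery of Sections~\ref{sec:almostsure}--\ref{sec:freq}: the global upper expectations and almost-sure events, and the imprecise-probabilistic frequentist notions of (weak) Church randomness formulated via countable families of selection processes. Concretely, enumerate the countably many recursive selection processes and define \(\frcstsystem\), recursively over \(\sits\) but genuinely non-computably, with values in \(\set{p,q}\subseteq\intervalpq\), in such a way that a \(\frcstsystem\)-test supermartingale which, on a stretch of situations carrying the forecast \(q\), bets against the next outcome more aggressively than \(\intervalpq\) would license is forced to make the associated selected subsequence of \(\pth\) deviate from the statistical regularities that \(\random\)-randomness for \(\intervalpq\) imposes --- and symmetrically for stretches carrying the forecast \(p\); any \(\frcstsystem\)-strategy unbounded on a \(\random\)-random \(\pth\) thereby yields a contradiction. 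The chain \(\pths_\ml(\frcstsystem)\subseteq\pths_\wml(\frcstsystem)\subseteq\pths_\co(\frcstsystem)\subseteq\pths_\s(\frcstsystem)\), together with the already-recorded reformulations of \(\co\)- and \(\s\)-randomness in terms of computable rather than recursive rational test supermartingales, should let one carry out essentially one argument uniformly over all four \(\random\).

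The main obstacle --- and the reason this cannot be a short proof --- is the unavoidable non-computability. By Theorem~\ref{the:R:inherently:imprecise}, no \emph{computable} precise forecasting system can reproduce \(\pths_\random(\intervalpq)\), so \(\frcstsystem\) has to encode non-computable information about which sequences are \(\intervalpq\)-random. At the same time \(\frcstsystem\) is a function of situations, whereas the behaviour one would like it to ``track'' --- drifting towards \(p\) versus drifting towards \(q\) --- is a property of entire paths, and two paths with opposite such behaviour may share arbitrarily long common prefixes. Reconciling this situation-locality with a path-level equivalence, uniformly over the four notions of randomness and while keeping \(\frcstsystem\) inside \(\intervalpq\), is the delicate core of the argument; the selection-process formulation is precisely what makes the required bookkeeping tractable.
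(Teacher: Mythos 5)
Your opening paragraph is precisely the paper's proof: the corollary is an immediate consequence of Theorem~\ref{theorem:whatsthedifference} (with Proposition~\ref{prop:nestedfrcstsystems} supplying the easy inclusion), applied to \(\frcstsystem=\frcstsystempq\) for some path \(\varpth\) that is \(\selections^{p,q}_{\mathscr{F}_\random}\)-random for an interval forecast \(I\subseteq\group{0,1}\), whose existence --- the one small point you leave implicit --- is guaranteed by Corollary~\ref{cor:Wald:at:least:one} because \(\selections^{p,q}_{\mathscr{F}_\random}\) is countable. The remainder of your text, which sketches a proof of the theorem itself, is not needed for the corollary and in any case misplaces the mechanism (the selection processes are applied to the auxiliary path \(\varpth\), not to \(\pth\), and \(\frcstsystempq\) is read off a fixed \(\selections\)-random \(\varpth\) by a simple temporal rule rather than built by enumerating recursive selection processes), but this does not affect the validity of your derivation of the corollary.
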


\begin{proof}
% Checked and improved by Gert
It is an immediate consequence of Theorem~\ref{theorem:whatsthedifference} that there is some precise forecasting system~\(\frcstsystem\in\frcstsystems\), with \(\frcstsystem\subseteq\sqgroup{p,q}\), such that a path~\(\pth\in\pths\) is \random-random for~\(\intervalpq\) if and only if it is \random-random for~\(\frcstsystem\).
\end{proof}

By Theorem~\ref{the:R:inherently:imprecise}, the precise forecasting system~\(\frcstsystem\) in Corollary~\ref{cor:textie} is then necessarily non-{\comp}, as well as non-stationary.

\begin{corollary}\label{cor:textie2}
Consider any~\(\random\in\set{\ml,\wml,\co,\s}\), any precise forecasting system~\(\frcstsystem\in\frcstsystems\) and any interval forecast~\(\sqgroup{p,q}\in\intervals\) with~\(p<q\).
If~\(\pths_\random(\frcstsystem)=\pths_\random(\sqgroup{p,q})\), then~\(\frcstsystem\) must be non-{\comp} and non-stationary.
\end{corollary}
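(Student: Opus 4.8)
The plan is to prove the two negative conclusions separately, by contradiction, leaning on Theorem~\ref{the:R:inherently:imprecise} for non-\comp{}ability and on the (imprecise-probabilistic) law of large numbers for non-stationarity. For non-\comp{}ability, suppose $\frcstsystem$ were \comp{}. Since $\frcstsystem$ is precise, its highest imprecision vanishes, $\sup_{\sit\in\sits}\sqgroup{\ufrcstsystem(\sit)-\lfrcstsystem(\sit)}=0$, and this is strictly smaller than $q-p$ because $p<q$. Theorem~\ref{the:R:inherently:imprecise} then supplies a path $\pth\in\pths$ that is \random-random for $\sqgroup{p,q}$ but not \random-random for $\frcstsystem$. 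The first property gives $\pth\in\pths_\random(\sqgroup{p,q})$, and the hypothesis $\pths_\random(\frcstsystem)=\pths_\random(\sqgroup{p,q})$ then forces $\pth\in\pths_\random(\frcstsystem)$, i.e., $\pth$ \emph{is} \random-random for $\frcstsystem$ --- a contradiction. Hence $\frcstsystem$ is non-\comp{}.

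For non-stationarity, suppose $\frcstsystem$ is stationary; being also precise, it coincides with a single real forecast, $\frcstsystem=r$ with $r\in\sqgroup{0,1}$, so $\pths_\random(\frcstsystem)=\pths_\random(r)$. By Proposition~\ref{prop:existence} there is a path $\pth_1$ that is \random-random for the precise stationary forecast $p$ and a path $\pth_2$ that is \random-random for $q$. Since the stationary forecasting systems $p$ and $q$ are pointwise included in $\sqgroup{p,q}$, Proposition~\ref{prop:nestedfrcstsystems} yields that $\pth_1$ and $\pth_2$ are \random-random for $\sqgroup{p,q}$, hence both lie in $\pths_\random(\sqgroup{p,q})=\pths_\random(r)$, i.e., both are \random-random for $r$. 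Now I would invoke the frequentist law of large numbers for these notions of randomness: a path that is \random-random for a precise forecast $a\in\sqgroup{0,1}$ has running relative frequency of ones converging to $a$ (equivalently, \random-randomness for $a$ implies (weak) Church randomness for $a$). Applied to $\pth_1$, its frequency converges both to $p$ and to $r$, so $r=p$; applied to $\pth_2$, its frequency converges both to $q$ and to $r$, so $r=q$. This contradicts $p<q$, so $\frcstsystem$ is non-stationary.

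I expect the only genuine obstacle to be this last law-of-large-numbers step, and in particular making sure it is available in the form needed here: for \emph{all four} notions $\random\in\set{\ml,\wml,\co,\s}$ and, crucially, for a possibly non-\comp{} precise forecast $r$ (which, by the first part, it necessarily is). For the extreme endpoints $p,q\in\set{0,1}$ this is completely elementary: a path that is \random-random for the forecast $0$ (resp.\ $1$) contains only finitely many ones (resp.\ zeros) --- witnessed by the recursive test supermartingale that starts at $1$ and doubles whenever a one (resp.\ a zero) is observed, an allowed and (computably) unbounded strategy --- and therefore has frequency $0$ (resp.\ $1$). For interior forecasts one either cites the corresponding frequency result from the earlier work on imprecise martingale-theoretic randomness, or builds, around rational under- and over-approximations of $r$, a positive recursive rational multiplier process whose generated test supermartingale grows along any path whose frequency of ones fails to approach $r$. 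This is routine, but it is the one spot where a little real work is hidden; everything else is a bookkeeping combination of Theorem~\ref{the:R:inherently:imprecise}, Proposition~\ref{prop:existence} and Proposition~\ref{prop:nestedfrcstsystems}.
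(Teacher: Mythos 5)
Your argument is correct, and its first half (non-computability) is exactly the paper's: precision forces \(\sup_{\sit\in\sits}[\ufrcstsystem(\sit)-\lfrcstsystem(\sit)]=0<q-p\), so a {\comp} \(\frcstsystem\) would contradict Theorem~\ref{the:R:inherently:imprecise} via the hypothesis \(\pths_\random(\frcstsystem)=\pths_\random(\sqgroup{p,q})\). For non-stationarity, however, you take a genuinely different route. The paper reuses the \emph{single} witness path \(\pth\) from Theorem~\ref{the:R:inherently:imprecise}: if \(\frcstsystem\equiv r\), then \(\pth\) is \random-random for \(r\), hence by Proposition~\ref{prop:nestedfrcstsystems} also for any {\comp} interval \(\sqgroup{\underline{r},\overline{r}}\ni r\) with \(\overline{r}-\underline{r}<q-p\) (such an interval always exists, e.g.\ with rational endpoints), directly contradicting the defining property of \(\pth\); this stays entirely inside the results stated in the paper and needs no frequency argument. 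You instead produce two witness paths (one \random-random for \(p\), one for \(q\)) via Propositions~\ref{prop:existence} and~\ref{prop:nestedfrcstsystems} and then invoke a law of large numbers for precise stationary forecasts to force \(r=p\) and \(r=q\). That LLN is indeed available for all four notions and for non-{\comp} \(r\) --- it is the \(\selection\equiv1\) instance of the ``\random-random implies \wch-random'' result that the paper itself cites (as \cite[Corollary~29]{CoomanBock2021}) in the proof of Proposition~\ref{prop:nonrecursive} --- but it is not among the paper's explicitly stated tools, so you are importing more machinery than the paper's two-line sandwich argument; what you gain is a more concrete, frequency-based explanation of \emph{why} a stationary precise model cannot match \(\sqgroup{p,q}\). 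One small caveat on your endpoint remark: the test supermartingale that doubles on every observed one is unbounded on a path with infinitely many ones, but it need not be \emph{computably} unbounded (the ones may occur at positions growing faster than every recursive function), so for \(\s\)-randomness you should instead use the supermartingale with \(\test(\sit1)\coloneqq\max\set{\test(\sit),\abs{\sit}+1}\) and \(\test(\sit0)\coloneqq\test(\sit)\), which is still allowed under the forecast \(0\) and satisfies \(\limsup_{n\to\infty}[\test(\pthton)-n]\geq0\).
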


\begin{proof}
% Checked and improved by Gert
Since \(q-p>0\) by assumption, and since \(\sup_{\sit\in\sits}[\ufrcstsystem(\sit)-\lfrcstsystem(\sit)]=0<q-p\) due to the precision of~\(\frcstsystem\), it follows from Theorem~\ref{the:R:inherently:imprecise} that \(\frcstsystem\) must be  non-{\comp}, because, otherwise, this theorem would guarantee that there is some some path~\(\pth\in\pths\) that is \random-random for~\(\intervalpq\) but not \random-random for~\(\frcstsystem\).

To conclude, we prove that the precise forecasting system~\(\frcstsystem\) can't be stationary either.
Indeed, assume \emph{ex absurdo} that \(\frcstsystem(\sit)=r\in\sqgroup{0,1}\) for all~\(\sit\in\sits\).
Theorem~\ref{the:R:inherently:imprecise} guarantees that there is some path~\(\pth\in\pths\) that is \random-random for~\(\intervalpq\), but that is not \random-random for any {\comp} forecasting system whose highest imprecision is smaller than that of~\(\intervalpq\).
By our assumption, however, \(\pth\) must also be \random-random for~\(\frcstsystem\), or in other words, for~\(r\).
Now, there always is some {\comp} interval forecast~\(\sqgroup{\underline{r},\overline{r}}\) such that \(r\in\sqgroup{\underline{r},\overline{r}}\) and~\(\overline{r}-\underline{r}<q-p\).
Since \(\pth\) is \random-random for~\(r\) by assumption, it is also \random-random for~\(\sqgroup{\underline{r},\overline{r}}\) by Proposition~\ref{prop:nestedfrcstsystems}, and this contradicts the assumption that \(\pth\) is not \random-random for any {\comp} forecasting system whose highest imprecision is smaller than that of~\(\intervalpq\).
\end{proof}
\revised{
We repeat that, in the martingale-theoretic setting, our main result complements Theorem~\ref{the:R:inherently:imprecise} by showing the importance of the computability assumption on the precise forecasting systems.
It turns out that our main result is also interesting from a measure-theoretic randomness perspective, since we'll see below it readily leads to corollaries that are reminiscent of existing measure-theoretic randomness results.
The measure-theoretic randomness notion that we consider for this purpose is \emph{uniform randomness}, which, as mentioned in the Introduction, allows us to consider the randomness of a path with respect to an effectively closed---or compact---class of measures. 
In particular, there is a test such that a path \(\pth\) passes this test if and only if it is uniformly random with respect to at least one member of the considered class of measures \cite[Theorem 5.23 and Remark 5.24]{Bienvenu2011}.
%uniformly random with respect to an effectively closed class of measures if and only if it is uniformly random with respect to at least one member of the class \cite[Theorem 5.23 and Remark 5.24]{Bienvenu2011}.
Our next result shows that our martingale-theoretic notion of randomness satisfies a similar property: a path \(\pth\in\pths\) is random for a stationary interval forecast if and only if it is random for at least one compatible precise forecasting system.

\begin{corollary}\label{cor:compatible}
Consider any~\(\random\in\set{\ml,\wml,\co,\s}\) and any stationary interval forecast~\(\intervalpq\in\intervals\).
Then a path \(\pth\in\pths\) is \random-random for \(\intervalpq\) if and only if it is \random-random for at least one compatible precise forecasting system \(\frcstsystem\in\frcstsystems\).
\end{corollary}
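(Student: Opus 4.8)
The plan is to read both implications off results we already have, so that no fresh construction is needed here: all the substantive work is pushed onto Theorem~\ref{theorem:whatsthedifference} through Corollary~\ref{cor:textie}.

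For the \emph{if} direction, I would start from a path \(\pth\in\pths\) that is \random-random for some compatible precise forecasting system \(\frcstsystem\in\frcstsystems\); by the definition of compatibility this means that \(\frcstsystem\) is precise and \(\frcstsystem\subseteq\intervalpq\). I would then apply Proposition~\ref{prop:nestedfrcstsystems} with the less informative forecasting system taken to be the stationary one \(\intervalpq\): since \(\frcstsystem\subseteq\intervalpq\), \random-randomness of \(\pth\) for \(\frcstsystem\) transfers to \random-randomness of \(\pth\) for \(\intervalpq\), which is exactly what is required.

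For the \emph{only if} direction, I would start from a path \(\pth\) that is \random-random for \(\intervalpq\) and distinguish whether \(\intervalpq\) is already precise. If \(p=q\), the stationary forecasting system \(\intervalpq\) is itself precise and trivially compatible with itself, so \(\pth\) is \random-random for the compatible precise forecasting system \(\intervalpq\), which settles this case. If \(p<q\), Corollary~\ref{cor:textie} supplies a precise forecasting system \(\frcstsystem\in\frcstsystems\) with \(\frcstsystem\subseteq\intervalpq\) — hence compatible with \(\intervalpq\) — such that a path is \random-random for \(\frcstsystem\) exactly when it is \random-random for \(\intervalpq\); instantiating this equivalence at \(\pth\) shows that \(\pth\) is \random-random for this compatible precise \(\frcstsystem\).

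I do not foresee a real obstacle at this level, precisely because the heavy lifting has been relegated to Theorem~\ref{theorem:whatsthedifference}. The only points that need care are (i) splitting off the degenerate case \(p=q\), which Corollary~\ref{cor:textie} does not cover, and (ii) checking that the statement's phrase ``compatible precise forecasting system'' matches its definition, namely a precise \(\frcstsystem\) with \(\frcstsystem\subseteq\intervalpq\) — which is exactly the conclusion of Corollary~\ref{cor:textie}.
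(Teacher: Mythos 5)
Your proposal is correct and follows essentially the same route as the paper: Proposition~\ref{prop:nestedfrcstsystems} for the ``if'' direction, and for the ``only if'' direction the split into the degenerate case \(p=q\) (where \(\intervalpq\) is itself a compatible precise forecasting system) and the case \(p<q\) handled by Corollary~\ref{cor:textie}. No gaps.
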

\begin{proof}
By Proposition~\ref{prop:nestedfrcstsystems}, the `if' part is straightforward, so we proceed to the `only if' part.
Assume that \(\pth\) is \random-random for the interval forecast \(\intervalpq\). If $p=q$, then $\pth$ is clearly \random-random for the compatible price forecasting system $p=[p,q]$. If $p<q$, then it follows from Corollary~\ref{cor:textie} that there is some precise compatible forecasting system \(\frcstsystem\in\frcstsystems\) for which \(\pth\) is \random-random.
\end{proof}

Another measure-theoretic result that we can now show has a martingale-theoretic counterpart, is the existence of a so-called \emph{neutral} measure for which all paths \(\pth\in\pths\) are random. In a measure-theoretic context, this is true for uniform randomness \cite[Theorem 6.2]{Bienvenu2011}. We here obtain a similar result for any of the four martingale-theoretic notions of randomness that we consider.

\begin{corollary}\label{cor:neutral}
Consider any \(\random\in\set{\ml,\wml,\co,\s}\).
Then there is a precise---but necessarily non-stationary and non-{\comp}---forecasting system \(\frcstsystem\in\frcstsystems\) for which all paths \(\pth\in\pths\) are \random-random.
\end{corollary}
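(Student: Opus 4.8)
The plan is to apply Corollary~\ref{cor:textie} to the vacuous interval forecast~\(\sqgroup{0,1}\), whose bounds trivially satisfy \(0<1\), as that corollary requires.

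First I would invoke Corollary~\ref{cor:textie} (equivalently, Theorem~\ref{theorem:whatsthedifference}) with the interval forecast~\(\sqgroup{0,1}\) to obtain a precise forecasting system~\(\frcstsystem\in\frcstsystems\), with \(\frcstsystem\subseteq\sqgroup{0,1}\), such that \(\pths_\random(\frcstsystem)=\pths_\random(\sqgroup{0,1})\). Next I would recall Proposition~\ref{prop:vacuous}, which tells us that \emph{every} path is \random-random for the vacuous forecasting system~\(\sqgroup{0,1}\), that is, \(\pths_\random(\sqgroup{0,1})=\pths\). Combining these, \(\pths_\random(\frcstsystem)=\pths\): all paths~\(\pth\in\pths\) are \random-random for~\(\frcstsystem\), which is the asserted existence.

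It then remains to justify the parenthetical claim that any such \(\frcstsystem\) is necessarily non-stationary and non-{\comp}. For this I would simply note that a precise forecasting system~\(\frcstsystem\) for which all paths are \random-random satisfies \(\pths_\random(\frcstsystem)=\pths=\pths_\random(\sqgroup{0,1})\), so that Corollary~\ref{cor:textie2}, applied with \(p=0<1=q\), immediately forces \(\frcstsystem\) to be non-{\comp} and non-stationary.

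I do not expect a genuine obstacle here: the statement is a direct consequence of the equivalence in Corollary~\ref{cor:textie} together with the fact (Proposition~\ref{prop:vacuous}) that the vacuous interval forecast renders every path random. The only thing worth checking is that the hypothesis \(p<q\) needed for Corollaries~\ref{cor:textie} and~\ref{cor:textie2} is met, which is immediate for~\(\sqgroup{0,1}\).
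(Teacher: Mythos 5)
Your proof is correct and follows essentially the same route as the paper's: apply Corollary~\ref{cor:textie} to the vacuous forecast \(\sqgroup{0,1}\), use Proposition~\ref{prop:vacuous} to conclude \(\pths_\random(\frcstsystem)=\pths\), and invoke Corollary~\ref{cor:textie2} for non-stationarity and non-computability. The extra check that \(p=0<1=q\) is a sensible (if routine) addition.
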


\begin{proof}
From Corollary~\ref{cor:textie}, it immediately follows that there is some precise forecasting system \(\frcstsystem\in\frcstsystems\) such that \(\pths_\random(\frcstsystem)=\pths_\random(\sqgroup{0,1})\), and hence, by Proposition~\ref{prop:vacuous}, \(\pths_\random(\frcstsystem)=\pths\).
Corollary~\ref{cor:textie2} then implies that~\(\frcstsystem\) is necessarily non-stationary and non-{\comp}.
\end{proof}

We find this result to be particularly intriguing.
Proposition~\ref{prop:vacuous} guarantees that every path~\(\pth\in\pths\) is random for the vacuous forecasting system.
But since all precise forecasting systems are compatible with the vacuous forecasting system, Corollary~\ref{cor:compatible} then tells us that this amounts to every path~\(\pth\in\pths\) being random for at least one precise forecasting system.
The result above strengthens this, by showing that there is in fact one single precise forecasting system for wich all paths are random. %; one for all, all for one.

%But what does this imply for the usage f imprecise uncertainty models? maybe ask a question, so what's the meaning of
Where does this discussion leave us?
Corollaries~\ref{cor:textie} and~\ref{cor:textie2} show that the randomness of a path~\(\pth\in\pths\) with respect to an interval forecast~\(\intervalpq\in\intervals\) with \(p<q\), be it {\comp} or not, can be equivalently described by a precise forecasting system that is then necessarily \emph{non-{\comp}} and non-stationary. This furthermore implied, as we have seen in Corollary~\ref{cor:compatible}, that randomness with respect to a stationary interval forecast can be equivalently described in terms of the compatible precise forecasting systems.
It may therefore seem that, on purely theoretical grounds, stationary imprecise forecasting systems are not needed in the study of algorithmic randomness.
However, if we want to maintain our claim that randomness is inherently imprecise, Corollaries~\ref{cor:textie2} and~\ref{cor:neutral} tell us we need only explain why we believe that non-{\comp} forecasting systems are non-satisfactory, and even fairly useless.
We will come to that in Section~\ref{sec:stat}, where we argue why the computability assumption on the forecasting system is justified on practical grounds.
But before getting to that, we now devote ourselves to the formulation and proof of Theorem~\ref{theorem:whatsthedifference}, and to introducing the requisite mathematical machinery.}

\section{Global uncertainty models and almost sure events}\label{sec:almostsure}
We will not only consider `local' gambles on the sample space~\(\set{0,1}\), as we did in Section~\ref{sec:single}, but also `global' gambles~\(\gambleglobal\colon\pths\to\reals\), which are bounded maps from the set~\(\pths\) of all paths to the real numbers.
We collect all such gambles in the set~\(\gamblesglobal\).
With every subset~\(\event\subseteq\pths\), which we call an \emph{event}, we associate the gamble~\(\ind{\event}\in\gamblesglobal\) which assumes the value~\(1\) on~\(\event\) and~\(0\) elsewhere, and call it the \emph{indicator} of~\(\event\); observe that, since \(0\leq\ind{\event}(\pth)\leq1\) for any~\(\pth\in\pths\), \(\ind{\event}\) is bounded and hence indeed a gamble.
The complement~\(\pths\setminus\event\) of an event~\(\event\in\pths\) is denoted by~\(\event^c\) and its indicator \(\ind{\event^c}\) satisfies \(\ind{\event}=1-\ind{\event^c}\).

Similarly to considering upper and lower expectations of local gambles with respect to interval forecasts in Section~\ref{sec:single}, we can also associate global upper and lower expectations with global gambles~\(\gamble\in\gamblesglobal\), but now with respect to forecasting systems \(\frcstsystem\in\frcstsystems\).
We do so by adopting the so-called game-theoretic \cite[Equations (6) and (7)]{CoomanBock2021}\footnote{Several versions of these definitions exist, which differ only in the type of supermartingales that are used (real-valued, extended real-valued, unbounded, bounded, bounded below) \cite{CoomanBock2021,tjoens2021,cooman2015:markovergodic,ShaferVovk2019,tjoens2021second}. For gambles, however, all these definitions are equivalent. We adopt the version in~\cite{CoomanBock2021} for reasons of simplicity, as it allows us to use the same supermartingales we introduced in Section~\ref{sec:multiple}.} upper expectation~\(\uexglobal\) and lower expectation~\(\lexglobal\), which are defined by
\begin{align*}
\uexglobal(\gambleglobal)
&\coloneqq
\inf\cset[\big]{\supermartin(\init)}
{\supermartin\in\supermartins[\frcstsystem]
\text{ and }
\liminf_{n\to\infty}\supermartin(\pthton)\geq \gambleglobal(\pth)
\text{ for all~\(\pth\in\pths\)}}\text{ for all } \gambleglobal\in\gamblesglobal,
\shortintertext{and}
\lexglobal(\gambleglobal)
&\coloneqq
\sup\cset[\big]{\submartin(\init)}
{\submartin\in\submartins[\frcstsystem]
\text{ and }
\limsup_{n\to\infty}\submartin(\pthton)\leq\gambleglobal(\pth)
\text{ for all~\(\pth\in\pths\)}}\text{ for all } \gambleglobal\in\gamblesglobal.
\end{align*}
These global upper and lower expectations are related to each other through the following conjugacy relationship \cite[Equation (8)]{CoomanBock2021}: \(\uexglobal(\gambleglobal)=-\lexglobal(-\gambleglobal)\) for all~\(\gambleglobal\in\gamblesglobal\).

The global upper expectation~\(\uexglobal(\gambleglobal)\) is the infimum initial capital for which Sceptic can adopt a betting strategy that guarantees her ending up with a higher capital than the reward that is associated with~\(\gambleglobal\), along all paths.
Moreover, \(\uexglobal\) satisfies the following properties, which resemble \ref{axiom:coherence:bounds}--\ref{axiom:coherence:increasingness}.
\begin{proposition}[{\cite[Proposition 2]{CoomanBock2021}}]
Consider any forecasting system~\(\frcstsystem\in\frcstsystems\).
Then for all gambles~\(\gambleglobal,g\in\gamblesglobal\) and all~\(\mu\in\reals\) and~\(\lambda\in\nonnegreals\):
\begin{enumerate}[label=\upshape{E}\arabic*.,ref=\upshape{E}\arabic*,leftmargin=*,itemsep=0pt]
\item \(\inf \gambleglobal\leq\uexglobal(\gambleglobal)\leq\sup \gambleglobal\) \hfill{\upshape[boundedness]}\label{prop:globalbound}
\item \(\uexglobal(\lambda \gambleglobal)=\lambda\uexglobal(\gambleglobal)\) \hfill{\upshape[non-negative homogeneity]}
\item \(\uexglobal(\gambleglobal+g)\leq\uexglobal(\gambleglobal)+\uexglobal(g)\) \hfill{\upshape[subadditivity]}\label{prop:subadditivity}
\item \(\uexglobal(\gambleglobal+\mu)=\uexglobal(\gambleglobal)+\mu\) \hfill{\upshape[constant additivity]}\label{prop:constantadditivity}
\item if \(\gamble\leq g\) then \(\uexglobal(\gamble)\leq\uexglobal(g)\) \hfill{\upshape[monotonicity]}\label{prop:increasingness}
\end{enumerate}
\end{proposition}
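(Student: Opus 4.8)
The plan is to reduce all five properties to the local coherence axioms \ref{axiom:coherence:bounds}--\ref{axiom:coherence:increasingness} of the operators $\uex_{\frcstsystem(\sit)}$, together with a few elementary closure properties of the cone $\supermartins$ of supermartingales for $\frcstsystem$. First I would record these closure properties. A constant real process $\supermartin\equiv c$ is a supermartingale, since then $\adddelta\supermartin(\sit)$ is the zero gamble and $\uex_{\frcstsystem(\sit)}(0)=0$ by \ref{axiom:coherence:bounds}. Because $\adddelta(\supermartin+\supermartin')(\sit)=\adddelta\supermartin(\sit)+\adddelta\supermartin'(\sit)$, $\adddelta(\lambda\supermartin)(\sit)=\lambda\adddelta\supermartin(\sit)$ and $\adddelta(\supermartin+\mu)(\sit)=\adddelta\supermartin(\sit)$, the supermartingale inequality is preserved under sums (by \ref{axiom:coherence:subsupadditivity}), under scaling by $\lambda\in\nonnegreals$ (by \ref{axiom:coherence:homogeneity}), and under adding a constant $\mu\in\reals$; so $\supermartins$ is a convex cone that is stable under adding constants. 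I would also note that for every $\gambleglobal\in\gamblesglobal$ the set over which $\uexglobal(\gambleglobal)$ is an infimum is non-empty, since it contains the constant process $\sup\gambleglobal$.

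The one genuinely non-trivial step — and the step I expect to be the main obstacle — is the lower bound $\inf\gambleglobal\le\uexglobal(\gambleglobal)$ in E1. Here I would first rewrite the supermartingale inequality at a situation $\sit$: applying \ref{axiom:coherence:constantadditivity} turns $\uex_{\frcstsystem(\sit)}(\adddelta\supermartin(\sit))\le0$ into $\supermartin(\sit)\ge\uex_{\frcstsystem(\sit)}(\supermartin(\sit\,\bullet))$, and then \ref{axiom:coherence:bounds} gives $\supermartin(\sit)\ge\min\set{\supermartin(\sit 0),\supermartin(\sit 1)}$. Iterating this, I would build a path $\pth^*\in\pths$ by always moving to a child on which $\supermartin$ is smallest, so that $n\mapsto\supermartin(\pth^*_{1:n})$ is non-increasing and hence $\liminf_{n\to\infty}\supermartin(\pth^*_{1:n})\le\supermartin(\init)$. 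If $\supermartin\in\supermartins$ satisfies $\liminf_{n\to\infty}\supermartin(\pthton)\ge\gambleglobal(\pth)$ for all $\pth\in\pths$, specialising to $\pth^*$ yields $\supermartin(\init)\ge\liminf_{n\to\infty}\supermartin(\pth^*_{1:n})\ge\gambleglobal(\pth^*)\ge\inf\gambleglobal$; taking the infimum over all such $\supermartin$ gives the bound. The upper bound $\uexglobal(\gambleglobal)\le\sup\gambleglobal$ is then immediate by feeding the constant supermartingale $\sup\gambleglobal$ into the definition. Together these also show that $\uexglobal(\gambleglobal)$ is a finite real number, which I will use freely below when making near-optimal choices.

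The remaining properties are then bookkeeping. For E2 with $\lambda>0$, the map $\supermartin\mapsto\lambda\supermartin$ is a bijection from the admissible supermartingales for $\gambleglobal$ onto those for $\lambda\gambleglobal$ that scales initial values by $\lambda$, so $\uexglobal(\lambda\gambleglobal)=\lambda\uexglobal(\gambleglobal)$; the case $\lambda=0$ is the instance $\uexglobal(0)=0$ of E1. For E4, the map $\supermartin\mapsto\supermartin+\mu$ is a bijection from the admissible supermartingales for $\gambleglobal$ onto those for $\gambleglobal+\mu$ that shifts initial values by $\mu$, giving $\uexglobal(\gambleglobal+\mu)=\uexglobal(\gambleglobal)+\mu$. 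For E3, given $\varepsilon>0$ I would pick admissible $\supermartin$ for $\gambleglobal$ and $\supermartin'$ for $g$ with $\supermartin(\init)\le\uexglobal(\gambleglobal)+\varepsilon$ and $\supermartin'(\init)\le\uexglobal(g)+\varepsilon$; then $\supermartin+\supermartin'\in\supermartins$, and since $\liminf_{n\to\infty}(\supermartin+\supermartin')(\pthton)\ge\liminf_{n\to\infty}\supermartin(\pthton)+\liminf_{n\to\infty}\supermartin'(\pthton)\ge(\gambleglobal+g)(\pth)$ for every $\pth$, it is admissible for $\gambleglobal+g$; hence $\uexglobal(\gambleglobal+g)\le\uexglobal(\gambleglobal)+\uexglobal(g)+2\varepsilon$, and letting $\varepsilon\downarrow0$ finishes E3. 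For E5, if $\gamble\le g$ then every supermartingale admissible for $g$ is a fortiori admissible for $\gamble$, so the infimum defining $\uexglobal(\gamble)$ ranges over a larger set than the one defining $\uexglobal(g)$, whence $\uexglobal(\gamble)\le\uexglobal(g)$.
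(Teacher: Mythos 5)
Your proof is correct, and since the paper only imports this proposition by citation from \cite{CoomanBock2021}, there is no in-paper argument to diverge from; your route (local coherence axioms plus cone/translation closure of \(\supermartins\), with the ``always step to the minimising child'' path giving the lower bound in E1) is exactly the standard one used in the cited source. All the delicate points are handled: finiteness of \(\uexglobal\) before invoking it in E2 and E3, and the fact that admissibility forces each \(\liminf\) to be bounded below so that superadditivity of \(\liminf\) applies in E3.
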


For any event~\(\event\subseteq\pths\) and any forecasting system~\(\frcstsystem\), global upper and lower expectations allow us to also  define their corresponding \emph{lower and upper probabilities}: \(\lprglobal(\event)\coloneqq\lexglobal(\ind{\event})\) and~\(\uprglobal(\event)\coloneqq\uexglobal(\ind{\event})\).
We say that an event~\(\event\) is \emph{almost sure} for a forecasting system~\(\frcstsystem\) if \(\lprglobal(\event)=1\); if the forecasting system~\(\frcstsystem\) is not important, or clear from the context, we simply say that \(\event\) is almost sure.
Observe that by \ref{prop:constantadditivity} and conjugacy,
\begin{equation*}
\uprglobal(\event^c)=\uexglobal(\ind{\event^c})=\uexglobal(1-\ind{\event})=1+\uexglobal(-\ind{\event})=1-\lexglobal(\ind{\event})=1-\lprglobal(\event)\text{ for all } \event\subseteq\pths.
\end{equation*}
Hence, an event~\(\event\) is almost sure if and only if \(\uprglobal(\event^c)=0\).
This alternative characterisation is often more convenient in proofs, and we will use it implicitly.

There are two features of almost sure events that will be useful to us. The first is that they are never empty.

\begin{proposition}\label{prop:non-empty}
Any almost sure event~\(\event\subseteq\pths\) is non-empty.
\end{proposition}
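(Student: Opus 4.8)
The plan is to argue by contradiction, using only the boundedness property~\ref{prop:globalbound} of the global upper expectation together with the alternative characterisation of almost sureness established just above the statement. Suppose, towards a contradiction, that some event~\(\event\subseteq\pths\) is almost sure but empty. Being almost sure, \(\event\) satisfies \(\uprglobal(\event^c)=\uexglobal(\ind{\event^c})=0\). Since \(\event=\emptyset\), we have \(\event^c=\pths\), so \(\ind{\event^c}=\ind{\pths}\) is the constant gamble that equals~\(1\) on every path, i.e., \(\inf\ind{\event^c}=1\). By~\ref{prop:globalbound}, then, \(\uexglobal(\ind{\event^c})\geq\inf\ind{\event^c}=1>0\), which contradicts \(\uexglobal(\ind{\event^c})=0\). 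Hence \(\event\) cannot be empty, which is what we had to prove. (Equivalently, one could observe that \(\ind{\emptyset}\) is the constant gamble~\(0\), so by conjugacy and~\ref{prop:globalbound} its lower probability \(\lprglobal(\emptyset)=\lexglobal(0)=-\uexglobal(0)=0\neq1\), so \(\emptyset\) is never almost sure.)

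There is no real obstacle here: the entire content has already been packaged into property~\ref{prop:globalbound}. For context, the game-theoretic reason \ref{prop:globalbound} holds is that, given any non-negative supermartingale~\(\supermartin\in\supermartins\), one can build a path along which \(\supermartin\) never increases: at each situation~\(\sit\), the inequality \(\min\set{\supermartindiff(\sit)(0),\supermartindiff(\sit)(1)}\leq\lex_{\frcstsystem(\sit)}(\supermartindiff(\sit))\leq\uex_{\frcstsystem(\sit)}(\supermartindiff(\sit))\leq0\) forces at least one of \(\supermartin(\sit0),\supermartin(\sit1)\) to be at most \(\supermartin(\sit)\), so picking that successor yields a path with \(\liminf_n\supermartin(\pthton)\leq\supermartin(\init)\); no supermartingale witnessing \(\uexglobal(\ind{\event^c})<1\) can therefore dominate the constant~\(1\). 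Since~\ref{prop:globalbound} is already available, however, the proof above needs none of this machinery and is essentially a one-line appeal to boundedness.
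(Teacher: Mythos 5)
Your proof is correct and is essentially identical to the paper's own argument: both derive \(\event^c=\pths\) from emptiness and then contradict \(\uprglobal(\event^c)=0\) with the boundedness property~\ref{prop:globalbound}, which gives \(\uexglobal(\ind{\pths})=\uexglobal(1)=1\). The extra remarks on why~\ref{prop:globalbound} holds are fine but not needed.
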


\begin{proof}
% Checked by Gert
Assume \emph{ex absurdo} that \(\event\) is empty.
This would imply that \(\event^c=\pths\) and therefore, since \(\event\) is almost sure, that \(\uprglobal(\pths)=0\). But it follows from property~\ref{prop:globalbound} that, actually, \(\uprglobal(\pths)=\uexglobal(1)=1\).
\end{proof}

The second feature is that countable intersections of almost sure events are still almost sure.
We start with finite intersections.

\begin{lemma}\label{lem:finiteintersection}
Consider two almost sure events \(\event,B\subseteq\pths\), then \(\event \cap B\) is almost sure as well.
\end{lemma}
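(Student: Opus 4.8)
The strategy is to pass to complements and exploit the coherence properties \ref{prop:globalbound}--\ref{prop:increasingness} of the global upper expectation~\(\uexglobal\). Recall that an event is almost sure for~\(\frcstsystem\) precisely when the upper probability of its complement vanishes, so the goal is to show \(\uprglobal((\event\cap B)^c)=0\), given \(\uprglobal(\event^c)=\uprglobal(B^c)=0\).

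The key observation is the pointwise indicator inequality \(\ind{(\event\cap B)^c}=\ind{\event^c\cup B^c}\leq\ind{\event^c}+\ind{B^c}\), which holds on all of~\(\pths\) since the right-hand side is at least~\(1\) wherever the left-hand side is~\(1\), and both sides are non-negative. Applying monotonicity~\ref{prop:increasingness} and then subadditivity~\ref{prop:subadditivity} gives
\[
\uprglobal((\event\cap B)^c)
=\uexglobal(\ind{(\event\cap B)^c})
\leq\uexglobal(\ind{\event^c}+\ind{B^c})
\leq\uexglobal(\ind{\event^c})+\uexglobal(\ind{B^c})
=\uprglobal(\event^c)+\uprglobal(B^c)=0.
\]
Finally, since \(\ind{(\event\cap B)^c}\geq0\), boundedness~\ref{prop:globalbound} yields \(\uprglobal((\event\cap B)^c)\geq\inf\ind{(\event\cap B)^c}\geq0\), so the upper probability is exactly~\(0\), which by the characterisation noted just before Proposition~\ref{prop:non-empty} means \(\event\cap B\) is almost sure.

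There is no real obstacle here: the argument is a direct combination of subadditivity and monotonicity, with the only thing to check being the elementary indicator inequality for the union \(\event^c\cup B^c\). The statement then extends to arbitrary finite intersections by an immediate induction, which sets up the subsequent treatment of countable intersections.
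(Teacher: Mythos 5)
Your proof is correct and follows essentially the same route as the paper's: both pass to complements, use the indicator inequality \(\ind{(\event\cap B)^c}=\ind{\event^c\cup B^c}\leq\ind{\event^c}+\ind{B^c}\), and then apply monotonicity~\ref{prop:increasingness}, subadditivity~\ref{prop:subadditivity} and boundedness~\ref{prop:globalbound} to conclude that \(\uprglobal((\event\cap B)^c)=0\). No differences worth noting.
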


\begin{proof}
% Checked by Gert
Since \(\event\) and~\(B\) are almost sure events, we know that \(\uprglobal(\event^c)=0\) and~\(\uprglobal(B^c)=0\).
By invoking properties \ref{prop:globalbound}, \ref{prop:subadditivity} and \ref{prop:increasingness}, it follows that 
\begin{align*}
0 \overset{\text{\ref{prop:globalbound}}}{\leq}
\uexglobal(\ind{(\event\cap B)^c)}) 
=\uexglobal(\ind{\event^c\cup B^c})
\overset{\text{\ref{prop:increasingness}}}{\leq}\uexglobal(\ind{\event^c}+\ind{B^c})
\overset{\text{\ref{prop:subadditivity}}}{\leq}\uexglobal(\ind{\event^c})+\uexglobal(\ind{B^c})=\uprglobal(\event^c)+ \uprglobal(B^c)=0.
\end{align*}
So \(\uprglobal((\event\cap B)^c)=0\) and, therefore, \(\event\cap B\) is almost sure.
\end{proof}

By combining this result with the following lemma, we obtain the version for countable intersections.

\begin{lemma}\label{lem:continuity}
Consider any non-decreasing sequence~\((\gambleglobal_n)_{n\in\naturalswithzero}\) in~\(\gamblesglobal\) that converges pointwise to a gamble~\(\gambleglobal\in\gamblesglobal\).
Then \(\uexglobal(\gambleglobal)=\lim_{n\to\infty}\uexglobal(\gambleglobal_n)\).
\end{lemma}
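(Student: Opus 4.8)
The plan is to prove the two inequalities $\uexglobal(\gambleglobal)\geq\lim_{n\to\infty}\uexglobal(\gambleglobal_n)$ and $\uexglobal(\gambleglobal)\leq\lim_{n\to\infty}\uexglobal(\gambleglobal_n)$ separately. The first is the easy direction: since $\gambleglobal_n\leq\gambleglobal$ for every $n\in\naturalswithzero$, monotonicity \ref{prop:increasingness} gives $\uexglobal(\gambleglobal_n)\leq\uexglobal(\gambleglobal)$, and the sequence $(\uexglobal(\gambleglobal_n))_{n\in\naturalswithzero}$ is non-decreasing (again by \ref{prop:increasingness}) and bounded above by $\sup\gambleglobal<+\infty$ (by \ref{prop:globalbound}), hence convergent, with limit at most $\uexglobal(\gambleglobal)$.

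The substantial direction is $\uexglobal(\gambleglobal)\leq\lim_{n\to\infty}\uexglobal(\gambleglobal_n)$. Writing $L\coloneqq\lim_{n\to\infty}\uexglobal(\gambleglobal_n)$, the idea is to fix $\epsilon>0$ and build, by "telescoping" supermartingales, a single supermartingale $\supermartin\in\supermartins[\frcstsystem]$ with $\supermartin(\init)\leq L+\epsilon$ that superhedges $\gambleglobal$, i.e.\ $\liminf_{n\to\infty}\supermartin(\pthton)\geq\gambleglobal(\pth)$ for all $\pth\in\pths$; this yields $\uexglobal(\gambleglobal)\leq L+\epsilon$, and letting $\epsilon\downarrow0$ finishes the proof. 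Concretely, by passing to a subsequence I may assume without loss of generality that $\gambleglobal_0=\inf\gambleglobal_0>-\infty$ is a constant (replace $\gambleglobal_n$ by $\max\{\gambleglobal_n,\inf\gambleglobal_0\}$, which only changes things on a vanishing scale and is dominated by a constant supermartingale); more importantly, I pick for each $k\in\naturals$ a supermartingale $\supermartin_k\in\supermartins[\frcstsystem]$ with $\liminf_{n\to\infty}\supermartin_k(\pthton)\geq\gambleglobal_k(\pth)-\gambleglobal_{k-1}(\pth)\geq0$ for all $\pth$ and with $\supermartin_k(\init)\leq\uexglobal(\gambleglobal_k-\gambleglobal_{k-1})+\epsilon\,2^{-k}$. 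Since $0\leq\gambleglobal_k-\gambleglobal_{k-1}$, monotonicity gives $\uexglobal(\gambleglobal_k-\gambleglobal_{k-1})\geq0$, and subadditivity \ref{prop:subadditivity} together with a telescoping argument gives $\sum_{k=1}^{m}\uexglobal(\gambleglobal_k-\gambleglobal_{k-1})\geq\uexglobal(\gambleglobal_m-\gambleglobal_0)=\uexglobal(\gambleglobal_m)-\gambleglobal_0$ — wait, that is the wrong direction; instead I will use that each increment is \emph{non-negative} so that $\uexglobal(\gambleglobal_k-\gambleglobal_{k-1})\leq\uexglobal(\gambleglobal_k)-\gambleglobal_0-\sum_{j=1}^{k-1}(\cdots)$ is controlled, and more cleanly just bound $\sum_{k=1}^{\infty}\supermartin_k(\init)\leq\gambleglobal_0+\sum_{k=1}^{\infty}\epsilon\,2^{-k}+\limsup_m\uexglobal(\gambleglobal_m)=L+\gambleglobal_0-\gambleglobal_0+\epsilon$, using $\uexglobal(\gambleglobal_m)-\gambleglobal_0\geq\sum_{k=1}^m(\uexglobal(\gambleglobal_k-\gambleglobal_{k-1})-0)$ is \emph{not} automatic and is exactly the delicate point.

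The clean way around this is the following: define $\supermartin\coloneqq\gambleglobal_0+\sum_{k=1}^{\infty}\supermartin_k'$ where $\supermartin_k'\coloneqq\supermartin_k-\supermartin_k(\init)$ is a supermartingale starting at $0$, so $\supermartin(\init)=\gambleglobal_0=\inf\gambleglobal_0\leq\uexglobal(\gambleglobal_0)\leq L$, plus a correction term of size $\epsilon$ coming from a super-hedger of the constant $\sum_k\supermartin_k(\init)$. One then checks (i) the series $\sum_k\supermartin_k'$ converges in every situation to a real process — this requires an argument that the partial sums are bounded, which is where one genuinely needs the domination $\gambleglobal_n\leq\gambleglobal$, the boundedness \ref{prop:globalbound} of $\uexglobal$, and a Ville/Doob-type maximal-inequality control on non-negative supermartingales; (ii) $\supermartin\in\supermartins[\frcstsystem]$, since a convergent sum of supermartingales for $\frcstsystem$ is again one (using sub/superadditivity of $\uex_{\frcstsystem(\sit)}$ and the fact that the tail converges pointwise, so the process difference passes to the limit); and (iii) $\liminf_n\supermartin(\pthton)\geq\gambleglobal_0+\sum_{k=1}^{\infty}(\gambleglobal_k(\pth)-\gambleglobal_{k-1}(\pth))=\lim_k\gambleglobal_k(\pth)=\gambleglobal(\pth)$, using Fatou-type superadditivity of $\liminf$ for non-negative terms. \textbf{The main obstacle} I expect is step (i)–(ii): making rigorous that the infinite sum of non-negative supermartingales converges to a finite-valued supermartingale and superhedges the limit gamble, which hinges on a maximal inequality ($\uprglobal$ of the event "$\supermartin_k'$ ever exceeds $\lambda$" is at most $\uexglobal(\sup_n\supermartin_k'(\pthton))/\lambda\leq$ something summable) so that the series converges uniformly on a full-probability set and can be extended by a negligible supermartingale everywhere else; this is the standard but technical heart of proving upward continuity of game-theoretic upper expectations, and I would cite or adapt the corresponding result from the game-theoretic probability literature (e.g.\ Shafer--Vovk) rather than redo it from scratch.
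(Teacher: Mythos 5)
Your easy direction is fine, but the direct construction you sketch for the substantial inequality \(\uexglobal(\gambleglobal)\leq\lim_{n\to\infty}\uexglobal(\gambleglobal_n)\) has a genuine gap---precisely the one you flag but never resolve. The telescoping scheme needs the total initial capital \(\sum_{k\geq1}\supermartin_k(\init)\leq\sum_{k\geq1}\uexglobal(\gambleglobal_k-\gambleglobal_{k-1})+\epsilon\) to be at most \(L-\gambleglobal_0+\epsilon\), but subadditivity~\ref{prop:subadditivity} only yields \(\uexglobal(\gambleglobal_m)\leq\uexglobal(\gambleglobal_0)+\sum_{k=1}^{m}\uexglobal(\gambleglobal_k-\gambleglobal_{k-1})\), i.e.\ it bounds the sum of increment prices from \emph{below} by the telescoped difference; no upper bound follows, and the sum can genuinely diverge. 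Concretely, take \(\frcstsystem\) vacuous and \(\gambleglobal_n\coloneqq\ind{\bigcup_{k\leq n}A_k}\) for pairwise disjoint non-empty events \(A_k\) (say, \(A_k\) the paths starting with \(k\) ones followed by a zero): then \(\uexglobal(\gambleglobal_n)=1\) for all \(n\geq1\), so \(L=1\), yet \(\uexglobal(\gambleglobal_k-\gambleglobal_{k-1})=\uprglobal(A_k)=1\) for every \(k\), so your budget is infinite. Your ``clean way around this''---recentring each \(\supermartin_k\) to \(\supermartin_k'=\supermartin_k-\supermartin_k(\init)\)---does not repair it: subtracting \(\supermartin_k(\init)\) lowers the superhedging guarantee of the \(k\)-th summand by exactly that amount, so \(\liminf_{n\to\infty}\supermartin(\pthton)\geq\gambleglobal_0+\sum_{k\geq1}\bigl(\gambleglobal_k(\pth)-\gambleglobal_{k-1}(\pth)-\supermartin_k(\init)\bigr)\) and you must again control \(\sum_k\supermartin_k(\init)\); the ``super-hedger of the constant \(\sum_k\supermartin_k(\init)\)'' you propose to add back must, by~\ref{prop:globalbound}, start with initial capital equal to that same (possibly infinite) constant, so nothing is gained. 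The maximal-inequality machinery in your steps (i)--(ii) addresses only convergence of the series, not this initial-capital accounting, and cannot close the gap.

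For comparison, the paper does not attempt any such construction. Its proof of Lemma~\ref{lem:continuity} consists of two citations: first, for gambles, the upper expectation \(\uexglobal\) defined via the real-valued supermartingales of Section~\ref{sec:multiple} coincides with the one defined via (extended real-valued) bounded-below supermartingales; second, for that latter functional, continuity with respect to non-decreasing sequences is an existing theorem of T'Joens, De Bock and De Cooman. So your closing fallback---cite the game-theoretic probability literature rather than redo the argument---is in effect the paper's entire proof; but as a self-contained argument your telescoping construction fails, and the intermediate equivalence between supermartingale classes is needed before any such citation applies to \(\uexglobal\) as defined here.
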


\begin{proof}
From \cite[Equation (5) and Proposition 10]{cooman2015:markovergodic}, it follows that the global upper expectation~\(\uexglobal(\gambleglobal)\), with~\(\gambleglobal\in\gamblesglobal\), can be equivalently defined in terms of bounded below supermartingales.
By \cite[Proposition 36]{tjoens2021}, this equivalence continues to hold when considering extended real-valued bounded below supermartingales.
Consequently, this lemma follows from \cite[Theorem 23]{tjoens2021}.
\end{proof}

\begin{corollary}\label{cor:intersection}
For any sequence~\((\event_n)_{n\in\naturalswithzero}\) of almost sure events, \(\bigcap_{n\in\naturalswithzero}\event_n\) is almost sure.
%The intersection of a countably infinite set of strictly almost sure events is almost sure.
\end{corollary}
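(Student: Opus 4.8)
The plan is to reduce the countable case to the finite case of Lemma~\ref{lem:finiteintersection} together with the continuity result of Lemma~\ref{lem:continuity}. First I would set $B_n\coloneqq\bigcap_{k=0}^{n}\event_k$ for every $n\in\naturalswithzero$. An easy induction on $n$, with base case $B_0=\event_0$ and inductive step $B_{n+1}=B_n\cap\event_{n+1}$, shows via Lemma~\ref{lem:finiteintersection} that every $B_n$ is almost sure, i.e.\ $\uprglobal(B_n^c)=0$ for all $n$.

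Next I would pass to complements in order to match the monotonicity direction required by Lemma~\ref{lem:continuity}. Since $B_0\supseteq B_1\supseteq B_2\supseteq\cdots$, the complements satisfy $B_0^c\subseteq B_1^c\subseteq\cdots$, so the sequence of indicators $(\ind{B_n^c})_{n\in\naturalswithzero}$ is non-decreasing in $\gamblesglobal$ (these are genuine gambles, being bounded between $0$ and $1$). Moreover, because $\bigcap_{n\in\naturalswithzero}\event_n=\bigcap_{n\in\naturalswithzero}B_n$, we have $\pths\setminus\bigl(\bigcap_{n}\event_n\bigr)=\bigcup_{n}B_n^c$, and hence $\ind{B_n^c}\to\ind{(\bigcap_n\event_n)^c}$ pointwise on $\pths$.

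Applying Lemma~\ref{lem:continuity} to this non-decreasing, pointwise convergent sequence then yields
\begin{equation*}
\uprglobal\Bigl(\bigl(\textstyle\bigcap_{n\in\naturalswithzero}\event_n\bigr)^c\Bigr)
=\uexglobal\bigl(\ind{(\bigcap_n\event_n)^c}\bigr)
=\lim_{n\to\infty}\uexglobal(\ind{B_n^c})
=\lim_{n\to\infty}\uprglobal(B_n^c)
=0,
\end{equation*}
so $\bigcap_{n\in\naturalswithzero}\event_n$ is almost sure by the characterisation of almost sure events in terms of the complement's upper probability. I do not expect any real obstacle here; the only point that needs a moment's care is the bookkeeping with complements so that the monotonicity hypothesis of Lemma~\ref{lem:continuity} is met (the lemma is stated for non-decreasing sequences, whereas the intersections $B_n$ themselves shrink), and the observation that the running intersections $B_n$ are almost sure, which is exactly what the inductive use of Lemma~\ref{lem:finiteintersection} delivers.
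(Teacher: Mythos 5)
Your proposal is correct and follows essentially the same route as the paper's own proof: finite intersections are handled via Lemma~\ref{lem:finiteintersection}, and the passage to the countable intersection is done by applying Lemma~\ref{lem:continuity} to the non-decreasing sequence of indicators of the complements. The only (welcome) difference is that you make explicit the induction needed to extend the two-event lemma to finite intersections, which the paper leaves implicit.
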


\begin{proof}
For any~\(n\in\naturalswithzero\), we know from Lemma~\ref{lem:finiteintersection} that the event~\(\bigcap_{k=0}^n\event_k\) is almost sure and, therefore, that \(0=\uprglobal((\bigcap_{k=0}^n\event_k)^c)=\uprglobal(\bigcup_{k=0}^n\event^c_k)\).
Since the sequence~\((\ind{\bigcup_{k=0}^n\event^c_k})_{n\in\naturalswithzero}\) in~\(\gamblesglobal\) is non-decreasing and converges pointwise to the gamble~\(\ind{\bigcup_{n\in\naturalswithzero}\event^c_n}\in\gamblesglobal\), it follows from Lemma~\ref{lem:continuity} that
\begin{align*}
\uprglobal((\bigcap_{n\in\naturalswithzero}\event_n)^c)
=\uprglobal(\bigcup_{n\in\naturalswithzero}\event^c_n)
=\uexglobal(\ind{\bigcup_{n\in\naturalswithzero}\event^c_n})
=\lim_{n\to\infty}\uexglobal(\ind{\bigcup_{k=0}^n\event^c_k})
=\lim_{n\to\infty}\uprglobal(\bigcup_{k=0}^n\event^c_k)
=0,
\end{align*}
so \(\bigcap_{n\in\naturalswithzero}\event_n\) is indeed almost sure.
\end{proof}

\section{Several frequentist notions of randomness}\label{sec:freq}
Inspired by von Mises and Wald's work \cite{wald1937,ambosspies2000,Mises1919}, which we mentioned in the Introduction, we will also consider a number of very general frequentist notions of randomness, in addition to the four martingale-theoretic randomness notions mentioned in Section~\ref{sec:borrow}.
In particular, we will consider a path~\(\pth\in\pths\) to be random for a forecasting system~\(\frcstsystem\in\frcstsystems\) and a countable set of selection processes~\(\selections\) if the frequencies of ones along all infinite subsequences of~\(\pth\) selected by these selection processes are bounded by~\(\frcstsystem\), in the following sense.

\begin{definition}\label{def:Waldrandom}
Consider any countable set of selection processes~\(\selections\).
Then a path~\(\pth\in\pths\) is \(\selections\)-random for a forecasting system~\(\frcstsystem\in\frcstsystems\) if for every selection process~\(\selection\in\selections\) for which \(\lim_{n\to\infty}\sum_{k=0}^{n-1}\selection(\pthtok)=\infty\),
\begin{equation*}
\liminf_{n\to\infty}
\frac{\sum_{k=0}^{n-1}\selection(\pthtok)[\pthatkplus-\underline{\frcstsystem}(\pthtok)]}
{\sum_{k=0}^{n-1}\selection(\pthtok)}
\geq0
\text{ and }
\limsup_{n\to\infty}
\frac{\sum_{k=0}^{n-1}\selection(\pthtok)[\pthatkplus-\overline{\frcstsystem}(\pthtok)]}
{\sum_{k=0}^{n-1}\selection(\pthtok)}
\leq0.
\end{equation*}
\end{definition}
\noindent For a stationary forecasting system~\(I\in\intervals\), the conditions in this definition simplify to the perhaps more intuitive requirement that
\begin{equation*}
\min I
\leq\liminf_{n\to\infty}
\frac{\sum_{k=0}^{n-1}\selection(\pthtok)\pthatkplus}{\sum_{k=0}^{n-1}\selection(\pthtok)}
\leq\limsup_{n\to\infty}
\frac{\sum_{k=0}^{n-1}\selection(\pthtok)\pthatkplus}{\sum_{k=0}^{n-1}\selection(\pthtok)}
\leq\max I.
\end{equation*}

If we restrict our attention to the set of all recursive selection processes---which, as mentioned in the Introduction \cite{ambosspies2000,church1940}, is countable---then the above randomness notion coincides with the notion of Church (\ch) randomness that we introduced elsewhere, and similarly for recursive temporal selection processes and weak Church (\wch) randomness \cite{floris2021ecsqaru}.
\begin{definition}[{\cite[Definition 6]{floris2021ecsqaru}}]\label{def:church:randomness}
A path~\(\pth\in\pths\) is \ch-random (\wch-random) for a forecasting system~\(\frcstsystem\in\frcstsystems\) if for every recursive (recursive temporal) selection process~\(\selection\) for which \(\lim_{n\to\infty}\sum_{k=0}^{n-1}\selection(\pthtok)=\infty\),
\begin{equation*}
\liminf_{n\to\infty}
\frac{\sum_{k=0}^{n-1}\selection(\pthtok)[\pthatkplus-\underline{\frcstsystem}(\pthtok)]}
{\sum_{k=0}^{n-1}\selection(\pthtok)}
\geq0
\text{ and }
\limsup_{n\to\infty}
\frac{\sum_{k=0}^{n-1}\selection(\pthtok)[\pthatkplus-\overline{\frcstsystem}(\pthtok)]}
{\sum_{k=0}^{n-1}\selection(\pthtok)}\leq 0.\footnote{Due to the binary character of the sample space~\(\set{0,1}\), the \ch-randomness of a path~\(\pth\in\pths\) with respect to a forecasting system~\(\frcstsystem\in\frcstsystems\) can be equivalently defined in terms of bounds on gambles~\(\gamble\in\gambles\) \cite[Theorem~23 and~24]{CoomanBock2021}. There is a similar equivalence for~\(\selections\)-randomness. It will disappear for larger sample spaces, leading to weaker and stronger versions of such randomness notions there.}
\end{equation*}
\end{definition}

In order to establish that Definition~\ref{def:Waldrandom} is meaningful, and therefore Definition~\ref{def:church:randomness} as well, we proceed as in Section~\ref{sec:borrow} and prove that for every forecasting system~\(\frcstsystem\) and every countable set of selection processes~\(\selections\), there is at least one \(\selections\)-random path for~\(\frcstsystem\).
Observe that this extends the work of Wald that we mentioned in the Introduction.
We will make use of the following lemma.

\begin{lemma}[{\cite[Theorem 21]{CoomanBock2021}}]\label{lem:well:calibrated}
For any forecasting system~\(\frcstsystem\), any selection process~\(\selection\) and any gamble~\(\gamble\in\gambles\), the event
\begin{align*}
\cset[\bigg]{\pth\in\pths}
{\text{if }\lim_{n\to\infty}\sum_{k=0}^{n-1}\selection(\pthto{k})=\infty\text{ then }\liminf_{n\to\infty}\frac{\sum_{k=0}^{n-1}\selection(\pthto{k})\sqgroup[\big]{\gamble(\pthat{k+1})-\lex_{\frcstsystem(\pthto{k})}(\gamble)}}{\sum_{k=0}^{n-1}\selection(\pthto{k})}\geq0}
\end{align*}
is almost sure for~\(\frcstsystem\).
\end{lemma}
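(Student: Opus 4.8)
The plan is to show that the complement $\event^c$ of the displayed event has zero upper probability for $\frcstsystem$; since an event is almost sure exactly when its complement has upper probability $0$, this suffices. Throughout, write $h_\sit\coloneqq\gamble-\lex_{\frcstsystem(\sit)}(\gamble)$ for the centred version of $\gamble$ at the situation $\sit$, and note that $\lex_{\frcstsystem(\sit)}(h_\sit)=0$ and hence, by conjugacy, $\uex_{\frcstsystem(\sit)}(-h_\sit)=0$. A path $\pth$ belongs to $\event^c$ precisely when $\lim_{n\to\infty}\sum_{k=0}^{n-1}\selection(\pthtok)=\infty$ while $\liminf_{n\to\infty}\frac{\sum_{k=0}^{n-1}\selection(\pthtok)h_{\pthtok}(\pthatkplus)}{\sum_{k=0}^{n-1}\selection(\pthtok)}<0$. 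Splitting the second condition over rational thresholds gives $\event^c=\bigcup_{j\in\naturals}\event_j$, where $\event_j$ collects the paths $\pth$ with $\lim_{n\to\infty}\sum_{k=0}^{n-1}\selection(\pthtok)=\infty$ for which $\sum_{k=0}^{n-1}\selection(\pthtok)h_{\pthtok}(\pthatkplus)<-\nicefrac1j\sum_{k=0}^{n-1}\selection(\pthtok)$ for infinitely many $n$. I would then prove $\uprglobal(\event_j)=0$ for every $j$; each $\event_j^c$ is then almost sure, and Corollary~\ref{cor:intersection} makes $\bigcap_{j\in\naturals}\event_j^c=\event$ almost sure.

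To bound $\uprglobal(\event_j)$, fix $j$, set $B\coloneqq\max\gamble-\min\gamble$ so that $\abs{h_\sit(x)}\le B$ for all $\sit$ and $x$, and choose a rational $\lambda\in(0,\nicefrac1{2B}]$ with $2\lambda B^2<\nicefrac1j$. Consider the multiplier process $\multprocess(\sit)(x)\coloneqq1-\lambda\selection(\sit)h_\sit(x)$, which is a non-negative gamble because $\lambda\abs{\selection(\sit)h_\sit(x)}\le\lambda B\le\nicefrac12$, and let $\test$ be the test process it generates. Its process difference is $\adddelta\test(\sit)=-\lambda\selection(\sit)\test(\sit)h_\sit$, so by non-negative homogeneity~\ref{axiom:coherence:homogeneity} and $\uex_{\frcstsystem(\sit)}(-h_\sit)=0$ we obtain $\uex_{\frcstsystem(\sit)}(\adddelta\test(\sit))=\lambda\selection(\sit)\test(\sit)\,\uex_{\frcstsystem(\sit)}(-h_\sit)=0$; hence $\test$ is a test supermartingale for $\frcstsystem$.

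Next I would estimate the growth of $\test$ on $\event_j$. Since $\log\test(\pthton)=\sum_{k=0}^{n-1}\log\group{1-\lambda\selection(\pthtok)h_{\pthtok}(\pthatkplus)}$, and $\log(1-u)\ge-u-2u^2$ for $\abs u\le\nicefrac12$, while $\selection(\sit)^2=\selection(\sit)$ and $h_\sit(x)^2\le B^2$, we get $\log\test(\pthton)\ge-\lambda\sum_{k=0}^{n-1}\selection(\pthtok)h_{\pthtok}(\pthatkplus)-2\lambda^2B^2\sum_{k=0}^{n-1}\selection(\pthtok)$. On $\event_j$ the first sum is $<-\nicefrac1j\sum_{k=0}^{n-1}\selection(\pthtok)$ for infinitely many $n$, so for such $n$ one has $\log\test(\pthton)>\group{\nicefrac\lambda j-2\lambda^2B^2}\sum_{k=0}^{n-1}\selection(\pthtok)$; the bracket is positive by the choice of $\lambda$, and $\sum_{k=0}^{n-1}\selection(\pthtok)\to\infty$, so $\limsup_{n\to\infty}\test(\pthton)=\infty$ on $\event_j$. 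Finally I would turn this into a probability bound: for any real $\alpha>1$, stopping $\test$ the first time its value reaches $\alpha$ yields a non-negative supermartingale $\test'$ for $\frcstsystem$ with $\test'(\init)=1$, and since $\limsup_{n\to\infty}\test(\pthton)=\infty$ on $\event_j$ we have $\liminf_{n\to\infty}\test'(\pthton)\ge\alpha$ there; hence $\nicefrac1\alpha\cdot\test'$ shows $\uprglobal(\event_j)=\uexglobal(\ind{\event_j})\le\nicefrac1\alpha$. Letting $\alpha\to\infty$ gives $\uprglobal(\event_j)=0$.

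The main obstacle is not any single step but assembling them correctly: one must verify that the multiplicative betting strategy really is a supermartingale \emph{for $\frcstsystem$} (non-negativity of the multiplier and the identity $\uex_{\frcstsystem(\sit)}(-h_\sit)=0$), carry the logarithmic estimate through with a correct constant, and---the genuinely delicate point---convert ``$\limsup_n\test(\pthton)=\infty$ on $\event_j$'' into a bound on $\uprglobal(\event_j)$, which needs the stopped-supermartingale (Ville-type) argument because the infimum defining $\uexglobal(\ind{\event_j})$ ranges over supermartingales whose \emph{liminf}, not limsup, dominates the indicator.
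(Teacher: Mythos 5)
Your proof is correct and complete (modulo the trivial degenerate case $\max\gamble=\min\gamble$, where $h_\sit\equiv0$, the event is all of $\pths$, and there is nothing to prove, so that your choice $\lambda\le\nicefrac{1}{2B}$ is well defined in all remaining cases). The paper itself gives no proof of this lemma---it imports it as Theorem~21 of De Cooman and De Bock---and your argument (the multiplicative test supermartingale generated by $1-\lambda\selection(\sit)h_\sit$, the estimate $\log(1-u)\ge-u-2u^2$ for $\abs{u}\le\nicefrac12$, and the stopped-supermartingale Ville step converting unboundedness on $\event_j$ into $\uprglobal(\event_j)=0$ before intersecting over rational thresholds) is precisely the standard game-theoretic strong-law argument by which that cited result is established.
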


From Lemma~\ref{lem:well:calibrated} and Corollary~\ref{cor:intersection}, it will readily follow that for any forecasting system~\(\frcstsystem\) and any countable set of selection processes~\(\selections\), the corresponding set of~\(\selections\)-random paths is almost sure for~\(\frcstsystem\), and hence, by Proposition~\ref{prop:non-empty}, there will be at least one path that is \(\selections\)-random for~\(\frcstsystem\).

\begin{proposition}\label{prop:Wald:almost:sure}
Consider any forecasting system~\(\frcstsystem\) and any countable set of selection processes~\(\selections\).
Then the event~\(\cset{\pth\in\pths}{\pth\text{ is \(\selections\)-random for }\frcstsystem}\) is almost sure for~\(\frcstsystem\).
\end{proposition}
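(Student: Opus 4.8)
The plan is to exhibit the $\selections$-random event as a countable intersection of events that are almost sure for $\frcstsystem$, and then to close the argument with Corollary~\ref{cor:intersection}.

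First, I would fix a single selection process $\selection\in\selections$ and notice that the two requirements in Definition~\ref{def:Waldrandom} for this $\selection$ are each an instance of the almost-sure event furnished by Lemma~\ref{lem:well:calibrated}, for a suitable choice of local gamble $\gamble\in\gambles$. For the $\liminf$ requirement — the one governed by $\underline{\frcstsystem}$ — take $\gamble$ to be the identity map on $\set{0,1}$; then $\lex_{\frcstsystem(\pthtok)}(\gamble)=\underline{\frcstsystem}(\pthtok)$ by Equation~\eqref{eq:lowerex}, and the event in Lemma~\ref{lem:well:calibrated} for this $\gamble$ is, verbatim, the set of paths satisfying the first condition of Definition~\ref{def:Waldrandom} for $\selection$. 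Hence that set is almost sure for $\frcstsystem$.

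Next, for the $\limsup$ requirement — the one governed by $\overline{\frcstsystem}$ — take $\gamble$ to be the map $x\mapsto-x$ on $\set{0,1}$, so that, by conjugacy, $\lex_{\frcstsystem(\pthtok)}(\gamble)=-\uex_{\frcstsystem(\pthtok)}(X)=-\overline{\frcstsystem}(\pthtok)$. Feeding this $\gamble$ into Lemma~\ref{lem:well:calibrated}, the numerator of the ratio acquires an overall minus sign, and the elementary equivalence between $\liminf_n(-a_n)\geq0$ and $\limsup_n a_n\leq0$ — applicable since the partial sums $\sum_{k=0}^{n-1}\selection(\pthtok)$ in the denominator are eventually positive whenever they diverge to $\infty$ — turns the event of Lemma~\ref{lem:well:calibrated} into exactly the set of paths satisfying the second condition of Definition~\ref{def:Waldrandom} for $\selection$. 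So this set, too, is almost sure for $\frcstsystem$, and by Lemma~\ref{lem:finiteintersection} so is their intersection, which is precisely the set of paths meeting both conditions of Definition~\ref{def:Waldrandom} for this $\selection$.

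Finally, the event $\cset{\pth\in\pths}{\pth\text{ is \(\selections\)-random for }\frcstsystem}$ is, by Definition~\ref{def:Waldrandom}, exactly the intersection over all $\selection\in\selections$ of the almost-sure events just constructed. If $\selections=\emptyset$ this event is $\pths$, which is trivially almost sure; otherwise, since $\selections$ is countable we may enumerate it as a sequence $(\selection_n)_{n\in\naturalswithzero}$ (repeating entries if $\selections$ is finite) and apply Corollary~\ref{cor:intersection} to conclude that the intersection is almost sure for $\frcstsystem$. The only step that I expect to require genuine care is the $\limsup$ translation in the preceding paragraph: one must verify that feeding the negated-identity gamble into Lemma~\ref{lem:well:calibrated} and flipping the sign reproduces the $\overline{\frcstsystem}$-clause of Definition~\ref{def:Waldrandom} exactly, and nothing slightly different; the rest is routine bookkeeping with conjugacy, the definitions of the local upper and lower expectations, and the two closure properties of almost-sure events.
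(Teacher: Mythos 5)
Your proposal is correct and follows essentially the same route as the paper's own proof: both invoke Lemma~\ref{lem:well:calibrated} with the gambles $X$ and $-X$ to make each of the two conditions in Definition~\ref{def:Waldrandom} an almost-sure event for every $\selection\in\selections$, and then conclude with Corollary~\ref{cor:intersection} on the resulting countable family. Your sign-flip verification for the $\limsup$ clause and the explicit handling of $\selections=\emptyset$ are fine; the paper simply intersects over all pairs $(\selection,z)$ with $z\in\set{+,-}$ in one step rather than first pairing via Lemma~\ref{lem:finiteintersection}, which is an immaterial difference.
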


\begin{proof}
% Checked and simplified by Gert
We start by associating two events \(\event^+_\selection,\event^-_\selection\subseteq\pths\) with every selection process~\(\selection \in\selections\), defined by
\begin{align*}
\event^+_\selection
&\coloneqq\cset[\bigg]{\pth\in\pths}
{\text{if }\lim_{n\to\infty}\sum_{k=0}^{n-1}\selection(\pthtok)=\infty\text{ then }\liminf_{n\to\infty}\frac{\sum_{k=0}^{n-1}\selection(\pthtok)[\pthatkplus-\underline{\frcstsystem}(\pthtok)]}{\sum_{k=0}^{n-1}\selection(\pthtok)}\geq 0}\\
\shortintertext{and}
\event^-_\selection
&\coloneqq\cset[\bigg]{\pth\in\pths}
{\text{if }\lim_{n\to\infty}\sum_{k=0}^{n-1}\selection(\pthtok)=\infty\text{ then }\limsup_{n\to\infty}\frac{\sum_{k=0}^{n-1}\selection(\pthtok)[\pthatkplus-\overline{\frcstsystem}(\pthtok)]}{\sum_{k=0}^{n-1}\selection(\pthtok)}\leq 0}.
\end{align*}
Lemma~\ref{lem:well:calibrated} with \(\gamble=X\) implies that the event~\(\event^+_\selection\) is almost sure for~\(\frcstsystem\), and similarly, Lemma~\ref{lem:well:calibrated} with \(\gamble=-X\) implies that the event~\(\event^-_\selection\) is almost sure for~\(\frcstsystem\).

Since the set of selection processes~\(\selections\) is countable, and since we associated with every selection process~\(\selection\in\selections\) two events \(\event^+_\selection\) and~\(\event^-_\selection\), the set of all these events is countable as well, and hence, by Corollary~\ref{cor:intersection}, the event~\(\bigcap_{\selection\in\selections,z\in\set{+,-}}\event^z_\selection\) is almost sure for~\(\frcstsystem\).
By Definition~\ref{def:Waldrandom}, the set~\(\bigcap_{\selection\in\selections,z\in\set{+,-}}\event^z_\selection\) is the set of paths that are \(\selections\)-random for~\(\frcstsystem\).
Hence, indeed, the event~\(\cset{\pth\in\pths}{\pth\text{ is \(\selections\)-random for }\frcstsystem}\) is almost sure for~\(\frcstsystem\).
\end{proof}

\begin{corollary}\label{cor:Wald:at:least:one}
For any forecasting system~\(\frcstsystem\) and any countable set of selection processes~\(\selections\), there is at least one path~\(\pth\in\pths\) that is \(\selections\)-random for~\(\frcstsystem\).
\end{corollary}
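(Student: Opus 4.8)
The plan is to simply chain together two results that are already in place: Proposition~\ref{prop:Wald:almost:sure}, which says that for any forecasting system~\(\frcstsystem\) and any countable set of selection processes~\(\selections\) the event \(\cset{\pth\in\pths}{\pth\text{ is }\selections\text{-random for }\frcstsystem}\) is almost sure for~\(\frcstsystem\), and Proposition~\ref{prop:non-empty}, which guarantees that any almost sure event is non-empty. Concretely, I would first invoke Proposition~\ref{prop:Wald:almost:sure} for the given \(\frcstsystem\) and \(\selections\) to conclude that the set of \(\selections\)-random paths for~\(\frcstsystem\) is almost sure; then I would apply Proposition~\ref{prop:non-empty} to this set to conclude that it is non-empty, which is exactly the assertion that at least one such path exists.

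There is essentially no obstacle here: all of the real work—building the events \(\event^+_\selection\), \(\event^-_\selection\), showing each is almost sure via Lemma~\ref{lem:well:calibrated}, and closing the countable intersection under Corollary~\ref{cor:intersection}—has already been carried out in the proof of Proposition~\ref{prop:Wald:almost:sure}. So this corollary is a one-line deduction, and the only thing to be careful about is to cite the two propositions in the right order (almost sure first, then non-empty).

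\begin{proof}
By Proposition~\ref{prop:Wald:almost:sure}, the event~\(\cset{\pth\in\pths}{\pth\text{ is \(\selections\)-random for }\frcstsystem}\) is almost sure for~\(\frcstsystem\). Proposition~\ref{prop:non-empty} then tells us that this event is non-empty, so there is at least one path~\(\pth\in\pths\) that is \(\selections\)-random for~\(\frcstsystem\).
\end{proof}
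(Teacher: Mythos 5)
Your proof is correct and is exactly the paper's own argument: invoke Proposition~\ref{prop:Wald:almost:sure} to get that the set of \(\selections\)-random paths is almost sure, then Proposition~\ref{prop:non-empty} to conclude it is non-empty. Nothing further is needed.
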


\begin{proof}
This is an immediate consequence of Propositions~\ref{prop:non-empty} and~\ref{prop:Wald:almost:sure}.
\end{proof}

If we restrict our attention to interval forecasts~\(I\in\intervals\) that stay away from zero and one, in the sense that~\(I\subseteq\group{0,1}\), and only consider notions of \(\selections\)-randomness that are stronger than \wch-randomness---so with \(\selections\) a superset of the recursive temporal selection processes---then we can also say something about the character of the \(\selections\)-random paths for~\(I\): they are non-recursive.

\begin{proposition}\label{prop:nonrecursive}
Consider any path~\(\pth\in\pths\) and any interval forecast~\(I\subseteq\group{0,1}\).
If~\(\pth\) is \wch-random for~\(I\), then it is non-recursive.
The same is true if \(\pth\) is \(\selections\)-random for \(I\), with \(\selections\) a countable superset of the recursive temporal selection processes, \revised{or if $\pth$ is $\random$-random for $I$, for any~\(\random\in\set{\ml,\wml,\co,\s}\).}
\end{proposition}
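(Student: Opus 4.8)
The plan is to show that already \wch-randomness for an interval forecast bounded away from $0$ and $1$ rules out recursiveness, and then to obtain the remaining cases by an easy reduction together with a parallel martingale argument. Write $I=\interval$, so that $I\subseteq\group{0,1}$ means $0<\underline{p}\leq\overline{p}<1$, and assume \emph{ex absurdo} that $\pth\in\pths$ is recursive and \wch-random for $I$. Since $\pth$ is an infinite binary sequence, it has infinitely many ones or infinitely many zeros, and I would treat these two (symmetric) cases in turn.

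In the case where $\pth$ has infinitely many ones, I would consider the selection process $\selection$ defined by $\selection(\sit)\coloneqq\pthatsplus$ for all $\sit\in\sits$: at each time step, $\selection$ selects the upcoming variable precisely when its value along $\pth$ will be a one. This $\selection$ is temporal, since $\selection(\sit)$ depends on $\sit$ only through $\abs{\sit}$, and recursive, since $\pth$ is. Along $\pth$ we have $\selection(\pthtok)=\pthatkplus$, so $\sum_{k=0}^{n-1}\selection(\pthtok)=\sum_{k=0}^{n-1}\pthatkplus\to\infty$ by assumption, and, using $\pthatkplus\in\set{0,1}$ and $\ufrcstsystem(\pthtok)=\overline{p}$, a one-line computation gives
\begin{equation*}
\limsup_{n\to\infty}\frac{\sum_{k=0}^{n-1}\selection(\pthtok)[\pthatkplus-\ufrcstsystem(\pthtok)]}{\sum_{k=0}^{n-1}\selection(\pthtok)}=1-\overline{p}>0,
\end{equation*}
contradicting Definition~\ref{def:church:randomness}. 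The case where $\pth$ has infinitely many zeros is symmetric: the process $\selection(\sit)\coloneqq 1-\pthatsplus$ is again recursive and temporal, selects infinitely often along $\pth$, and $\lfrcstsystem(\pthtok)=\underline{p}$ together with $(1-\pthatkplus)\pthatkplus=0$ yields $\liminf_{n\to\infty}(\cdots)=-\underline{p}<0$, again contradicting Definition~\ref{def:church:randomness}. Hence $\pth$ cannot be recursive.

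For the extension to $\selections$-randomness, I would simply note that if the countable set $\selections$ contains all recursive temporal selection processes, then $\selections$-randomness for $I$ implies \wch-randomness for $I$, so the same conclusion follows. For the \random-randomness case with $\random\in\set{\ml,\wml,\co,\s}$, the cleanest route (if available) is to cite that each of these notions is at least as strong as \wch-randomness; failing that, I would argue directly in the martingale picture. If $\pth$ were recursive, define a multiplier process $\multprocess$ that does nothing — multiplier constantly $1$ — at every situation that is not a prefix of $\pth$, and that at a prefix $\sit=\pthtok$ bets almost all of its capital on the next outcome $\pthatkplus$ of $\pth$ (which is computable from $\sit$, since $\pth$ is recursive), assigning to the realised outcome a \emph{fixed rational} multiplier lying strictly between $1$ and $1/\overline{p}$ (when $\pthatkplus=1$), respectively between $1$ and $1/(1-\underline{p})$ (when $\pthatkplus=0$), and a small positive rational to the other outcome. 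Such rationals exist exactly because $\overline{p}<1$ and $\underline{p}>0$, and by choosing the ``other outcome'' multiplier small enough the admissibility inequality $\uex_I(\multprocess(\sit))\leq 1$ holds at every $\sit$. The generated test process $\mint$ is then a recursive positive rational test supermartingale for $I$ that grows at least geometrically along $\pth$, hence is computably unbounded on $\pth$; since it belongs to $\mltests{I}$, $\weakmltests{I}$, $\comptests{I}$ and $\schnorrtests{I}$, this contradicts $\pth$ being \random-random for $I$, for every $\random\in\set{\ml,\wml,\co,\s}$.

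The arithmetic steps above are all routine, a consequence of the binary sample space, and so is checking that $\selection$, and $\multprocess$ and $\mint$, are recursive. The one delicate point — and the crux — is that $I$ need not be computable: this is why, in the martingale construction, the betting factors must be \emph{fixed, hard-coded} rationals merely known to lie in the open intervals $(1,1/\overline{p})$ and $(1,1/(1-\underline{p}))$, rather than quantities computed from $\overline{p}$ and $\underline{p}$; the non-emptiness of these intervals is precisely what the hypothesis $0<\underline{p}\leq\overline{p}<1$ secures.
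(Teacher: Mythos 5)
Your proof is correct and follows essentially the same route as the paper's: the paper likewise derives a contradiction from the recursive temporal selection processes \(\selection_1(n)=\pth_{n+1}\) and \(\selection_0(n)=1-\pth_{n+1}\), which force a selected-subsequence frequency of \(1\) or \(0\), and it handles the \(\random\)-random case exactly via your ``cleanest route'', by citing that \(\random\)-random paths satisfy the frequency bounds for recursive temporal selection processes (Corollary~29 of De Cooman and De Bock). Your fallback martingale construction with hard-coded rational betting factors is also sound and would give a self-contained proof of the \(\random\)-random case, but it is not needed.
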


\begin{proof}
Assume \emph{ex absurdo} that \(\pth\) is recursive.
Then the temporal selection processes \(\selection_0,\selection_1~\in~\smash{\selectionsrec}\), defined by \(\selection_0(n)\coloneqq 1-\smash{\pthatnplus}\) and \(\selection_1(n)\coloneqq \smash{\pthatnplus}\) for all \(n\in\naturalswithzero\), are recursive.
Clearly, since \(\pth\) is a binary infinite sequence, it holds that \(\lim_{n \to \infty}\smash{\sum_{k=0}^{n-1}}\selection_0(\pthtok)=\infty\) or \(\lim_{n \to \infty}\smash{\sum_{k=0}^{n-1}}\selection_1(\pthtok)=\infty\), and therefore that
\begin{align*}
\lim_{n \to \infty}\frac{\sum_{k=0}^{n-1}\selection_0(\pthtok)\pthatkplus}{\sum_{k=0}^{n-1}\selection_0(\pthtok)}=0
\quad
\text{or}
\quad
\lim_{n \to \infty}\frac{\sum_{k=0}^{n-1}\selection_1(\pthtok)\pthatkplus}{\sum_{k=0}^{n-1}\selection_1(\pthtok)}=1.
\end{align*}
However, due to our assumptions about~\(I\), we also know---\revised{using Definition~\ref{def:church:randomness} if $\pth$ is \wch-random, Definition~\ref{def:Waldrandom} if $\pth$ is \(\selections\)-random, or \cite[Corollary~29]{CoomanBock2021} if $\pth$ is $\random$-random}---that every recursive temporal selection process \(\selection\) for which \smash{\(\lim_{n \to+\infty}\sum_{k=0}^{n-1} \selection(\pthtok) =+\infty\)} satisfies
\begin{align*}
0 < \min I \leq  \liminf_{n \to +\infty} \frac{\sum_{k=0}^{n-1}\selection(\pthtok) \pthatkplus}{\sum_{k=0}^{n-1}\selection(\pthtok)}
\quad
\text{and}
\quad
\limsup_{n \to +\infty} \frac{\sum_{k=0}^{n-1}\selection(\pthtok) \pthatkplus}{\sum_{k=0}^{n-1}\selection(\pthtok)}
\leq\max I < 1,
\end{align*}
a contradiction.
\end{proof}

\revised{In other words, if a path is recursive, meaning that it has a finite description, then it cannot be random---in any of the senses considered in the result---with respect to an interval forecast $I$ that stays away from both zero and one. Consider for example the path that takes the value zero at even positions and one at the others. Due to the above result, this path cannot be random with respect to an interval $[p,q]$, unless either $p=0$ or $q=1$. In fact, it is easy to see that this particular path is only random for the interval \([0,1]\).}

\revised{In the next section, we will use an \(\selections\)-random path \(\pth\in\pths\) to finally craft the special precise forecasting systems we have been constantly talking about. Due to the above result, this path will necessarily be non-recursive, and hence not describable in any finite way.
%come up/construct (with) a precise forecasting system~\(\frcstsystem\in\frcstsystems\) for every interval forecast~\(I\in\intervals\) that has the exact same set of random paths.
}

\section{Proof of the main result}\label{sec:proof}
At this point, we have introduced the mathematical apparatus that is necessary for proving the main result of this paper; we intend to show that for any interval forecast~\(\intervalpq\in\intervals\) and any~\(\random\in\set{\ml,\wml,\co,\s}\), there is some (non-stationary non-{\comp}) precise forecasting system~\(\frcstsystem\in\frcstsystems\) that has the exact same set of \random-random paths as \(\intervalpq\).

To this end, we consider a special type of precise forecasting system.
Fix any two real numbers~\(p,q\in\sqgroup{0,1}\) and any path~\(\varpth\in\pths\), and consider the associated temporal precise forecasting system~\(\frcstsystem^\varpth_{p,q}\in\frcstsystems\), defined by
\begin{equation}\label{eq:special:forecast:system:def}
\frcstsystem^\varpth_{p,q}(n)
\coloneqq
\begin{cases}
p&\text{if }\varpthatnplus=0\\
q&\text{if }\varpthatnplus=1
\end{cases}
\text{ for all }n\in\naturalswithzero.
\end{equation}

In our main result, Theorem~\ref{theorem:whatsthedifference} below, we will in particular use paths~\(\varpth\) that are \(\selections\)-random for an interval forecast~\(I\subseteq (0,1)\), where the countable set of selection processes~\(\selections\) is of a special type.
To make clear what such~\(\selections\) are like, we start by associating with every real process~\(\process\) and every real number~\(r\in\sqgroup{0,1}\) a temporal selection process~\(\selection^r_\process\), defined by
\begin{equation}\label{eq:selection:def}
\selection^r_\process(n)
\coloneqq
\begin{cases}
1&\text{if \(\ex_r(\adddelta\process(\sit))>0\) for some \(\sit\in\sits\) with \(\abs{\sit}=n\)}\\
0&\text{if \(\ex_r(\adddelta\process(\sit))\leq0\) for all \(\sit\in\sits\) with \(\abs{\sit}=n\)}
\end{cases}
\text{ for all \(n\in\naturalswithzero\).}
\end{equation}
We use these temporal selection processes to associate with every countable set~\(\mathscr{F}\) of real processes and every two real numbers~\(p,q\in\sqgroup{0,1}\) the clearly countable set~\(\selections^{p,q}_{\mathscr{F}}\) of temporal selection processes, defined by
\begin{equation}\label{eq:selection:sets}
\selections^{p,q}_{\mathscr{F}}
\coloneqq\cset[\big]{\selection^r_\process}{\process\in\mathscr{F}\text{ and }r\in\set{p,q}}.
\end{equation}
Since \(\selections^{p,q}_{\mathscr{F}}\) is countable, Corollary~\ref{cor:Wald:at:least:one} guarantees that there is at least one path that is \(\selections^{p,q}_\mathscr{F}\)-random for a given interval forecast~\(I\subseteq\group{0,1}\).

In this construction of the sets~\(\selections^{p,q}_{\mathscr{F}}\), the specific countable sets~\(\mathscr{F}\) of real processes that we will consider, are the sets~\(\mathscr{F}_\s\), \(\mathscr{F}_\co\), \(\mathscr{F}_\wml\) and~\(\mathscr{F}_\ml\) introduced in Section~\ref{sec:comp}.
If we recall that \(\mathscr{F}_\s=\mathscr{F}_\co\subseteq\mathscr{F}_\wml\subseteq\mathscr{F}_\ml\), Equation~\eqref{eq:selection:sets} tells us that
\begin{equation}\label{eq:selections:order}
\selections^{p,q}_{\mathscr{F}_\s}
=\selections^{p,q}_{\mathscr{F}_\co}
\subseteq\selections^{p,q}_{\mathscr{F}_\wml}
\subseteq\selections^{p,q}_{\mathscr{F}_\ml}
\text{ for all \(p,q\in\sqgroup{0,1}\).}
\end{equation}

The sets~\(\selections^{p,q}_\mathscr{F}\) might look a bit artificial, but if we restrict our attention to \emph{rational} numbers~\(p,q\in\rationals\) and to~\(\random\in\set{\co,\s}\), then as our next results shows, the corresponding~\(\selections_{\mathscr{F}_\random}^{p,q}\) are in fact the set of all recursive temporal selection processes.
Since this is exactly the set of selection processes used in Definition~\ref{def:church:randomness} to define \wch-randomness, we conclude from this that in those particular cases, for any forecasting system~\(\frcstsystem\), a path~\(\varpth\in\pths\) is \(\selections_{\mathscr{F}_\random}^{p,q}\)-random for~\(\frcstsystem\) if and only if it is \(\wch\)-random for~\(\frcstsystem\).

\begin{proposition}\label{prop:wald:church}
Consider any two rational numbers~\(p,q\in\sqgroup{0,1}\) and any~\(\random\in\set{\co,\s}\).
Then \(\selections_{\mathscr{\process}_\random}^{p,q}\) consists of all recursive temporal selection processes.
\end{proposition}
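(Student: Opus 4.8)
The plan is to establish the two inclusions that together give the claimed equality. Throughout, write $\mathscr{F}\coloneqq\mathscr{F}_\random=\mathscr{F}_\co=\mathscr{F}_\s$ for the set of all recursive positive rational test processes, and recall from Equations~\eqref{eq:selection:def} and~\eqref{eq:selection:sets} that $\selections^{p,q}_{\mathscr{F}}$ consists exactly of the temporal selection processes $\selection^r_\process$ with $\process\in\mathscr{F}$ and $r\in\set{p,q}$.

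For the inclusion of $\selections^{p,q}_{\mathscr{F}}$ in the set of all recursive temporal selection processes, I would fix any $\process\in\mathscr{F}$ and any $r\in\set{p,q}$ and argue that $\selection^r_\process$ is recursive; it is temporal by the very form of Equation~\eqref{eq:selection:def}. The key observation is that, since $\process$ is a recursive \emph{rational} process and $r$ is a \emph{rational} number, the map $\sit\mapsto\ex_r(\adddelta\process(\sit))=r\sqgroup{\process(\sit 1)-\process(\sit)}+(1-r)\sqgroup{\process(\sit 0)-\process(\sit)}$ is a recursive rational map, so there is a finite algorithm that decides, given $\sit$, whether $\ex_r(\adddelta\process(\sit))>0$. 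Since for each $n\in\naturalswithzero$ there are only finitely many situations $\sit$ with $\abs{\sit}=n$, a finite algorithm can loop over all of them and thereby compute $\selection^r_\process(n)$, so $\selection^r_\process$ is recursive.

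For the reverse inclusion, I would take an arbitrary recursive temporal selection process $\selection$ and exhibit one witness $\process\in\mathscr{F}$ with $\selection^r_\process=\selection$ for \emph{every} $r\in\sqgroup{0,1}$, hence in particular for $r\in\set{p,q}$. The candidate is the temporal process $\process$ given by $\process(n)\coloneqq\prod_{k=0}^{n-1}(1+\selection(k))=2^{\sum_{k=0}^{n-1}\selection(k)}$: this is a positive, integer- (hence rational-)valued test process, and it is recursive because $\selection$ is, so $\process\in\mathscr{F}$. A short computation then shows that $\adddelta\process(\sit)(x)=\process(\sit)\,\selection(\abs{\sit})$ for \emph{both} values $x\in\set{0,1}$, whence $\ex_r(\adddelta\process(\sit))=\process(\sit)\,\selection(\abs{\sit})$ for \emph{every} $r$; since $\process(\sit)>0$, this quantity is strictly positive exactly when $\selection(\abs{\sit})=1$ and is zero exactly when $\selection(\abs{\sit})=0$. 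Plugging this into Equation~\eqref{eq:selection:def} gives $\selection^r_\process(n)=\selection(n)$ for all $n\in\naturalswithzero$, as required.

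The point that requires the most care is the first inclusion, and its proof is precisely where the hypothesis $\random\in\set{\co,\s}$---rather than $\random\in\set{\ml,\wml}$---is used: it is what guarantees that $\process$ is recursive and rational, so that the sign of the rational number $\ex_r(\adddelta\process(\sit))$ is genuinely \emph{decidable}. For merely lower-semicomputable test processes this decidability, and hence the recursiveness of $\selection^r_\process$, would in general fail, which is why the analogue of the proposition breaks down for $\ml$- and $\wml$-randomness. Everything else is routine: in the second inclusion the only thing worth noticing is that the chosen process either doubles on both children or stays put on both children, so its process difference $\adddelta\process(\sit)$ does not depend on $x$ and the value of $r$ becomes irrelevant.
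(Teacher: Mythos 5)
Your proposal is correct and follows essentially the same route as the paper: the same decidability argument (rationality of $\process$ and of $r$, plus finiteness of the set of situations of each length) for the forward inclusion, and for the reverse inclusion the same idea of a recursive positive rational temporal test process whose increments are independent of $x$ and positive exactly when $\selection(n)=1$. The only difference is cosmetic---you take the multiplicative witness $\process(n)=2^{\sum_{k=0}^{n-1}\selection(k)}$ where the paper uses the additive $\process(n)=1+\sum_{k=0}^{n-1}\selection(k)$---and both work for the same reason.
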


\begin{proof}
% Checked and slightly improved by Gert
We start by proving that every selection process~\(\selection\in\selections_{\mathscr{F}_\random}^{p,q}\) is recursive and temporal.
By Equation~\eqref{eq:selection:sets}, we know that~\(\selection=\selection^r_\process\) for some~\(\process\in\mathscr{F}_\random\) and~\(r\in\set{p,q}\).
Hence, by Equation~\eqref{eq:selection:def}, \(\selection=\selection^r_\process\) is temporal.
Furthermore, since the rational process~\(\process\) is recursive, since \(p,q\in\rationals\) and since there is a finite algorithm that, for every \(k\in\naturalswithzero\), can enumerate the finite number of situations~\(\sit\in\sits\) for which \(\abs{\sit}=k\), it immediately follows that there is a finite algorithm that can check the inequalities in Equation~\eqref{eq:selection:def}, so the temporal selection process~\(\selection=\selection^r_\process\) is also recursive.

That, conversely, every recursive temporal selection process belongs to~\(\selections_{\mathscr{\process}_\random}^{p,q}\),
follows directly from our next result, Proposition~\ref{prop:helpiewie}.
\end{proof}

More generally, for any~\(\random\in\set{\ml,\wml,\co,\s}\) and any two \emph{real} numbers~\(p,q\in\sqgroup{0,1}\), the set~\(\selections_{\mathscr{\process}_\random}^{p,q}\) will actually include all recursive temporal selection processes.
Therefore, if a path~\(\varpth\in\pths\) is \(\selections_{\mathscr{F}_\random}^{p,q}\)-random for a forecasting system~\(\frcstsystem\in\frcstsystems\), it will in particular also be \wch-random for~\(\frcstsystem\).
Further on, we will restrict our attention to stationary forecasting systems~\(\frcstsystem\in\frcstsystems\) that stay away from zero and one, in the sense that~\(\frcstsystem(\sit)=I\subseteq\group{0,1}\) for all~\(\sit\in\sits\), and the \(\selections_{\mathscr{F}_\random}^{p,q}\)-random paths for~\(I\) will then necessarily be non-recursive, due to Proposition~\ref{prop:nonrecursive}.

\begin{proposition}\label{prop:helpiewie}
Consider any two real numbers~\(p,q\in\sqgroup{0,1}\) and any~\(\random\in\set{\ml,\wml,\co,\s}\).
Then every recursive temporal selection process belongs to~\(\selections_{\mathscr{\process}_\random}^{p,q}\).
\end{proposition}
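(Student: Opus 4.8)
I want to show that every recursive temporal selection process~\(\selection\) can be realised as~\(\selection^r_\process\) for a suitable~\(\process\in\mathscr{F}_\random\) and~\(r\in\set{p,q}\). The natural idea is to \emph{encode the decision} ``select at time~\(n\)'' into the behaviour of a test process that only ``acts'' at the relevant stages. Fix a recursive temporal selection process~\(\selection\colon\naturalswithzero\to\set{0,1}\). I will construct a test process~\(\process\) — in fact a recursive positive rational one, so it simultaneously lands in~\(\mathscr{F}_\s=\mathscr{F}_\co\subseteq\mathscr{F}_\wml\subseteq\mathscr{F}_\ml\), which handles all four cases of~\(\random\) at once — whose process difference~\(\adddelta\process(\sit)\) has the property that \(\ex_r(\adddelta\process(\sit))>0\) for all~\(\sit\) with~\(\abs{\sit}=n\) when~\(\selection(n)=1\), and \(\ex_r(\adddelta\process(\sit))\le 0\) for all such~\(\sit\) when~\(\selection(n)=0\). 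By Equation~\eqref{eq:selection:def}, this exactly gives~\(\selection^r_\process=\selection\).

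**Construction of~\(\process\).** Pick~\(r=p\) (either choice works; I just need one fixed value in~\(\set{p,q}\)). The idea is to make~\(\process\) multiplicative and temporal: take a multiplier process~\(\multprocess\) that depends on the situation only through its length, with~\(\multprocess(\sit)=\multprocess(n)\) a gamble on~\(\set{0,1}\) chosen so that, when~\(\selection(n)=1\), the multiplier is a genuine (strictly favourable-looking) bet with~\(\ex_p(\multprocess(n)-1)>0\), and when~\(\selection(n)=0\), the multiplier is constant~\(1\) (so~\(\adddelta\process(\sit)=0\) and~\(\ex_p(\adddelta\process(\sit))=0\le0\)). Concretely, when~\(\selection(n)=1\) one can take a small rational perturbation of the constant gamble~\(1\), say~\(\multprocess(n)(1)\coloneqq 1+\epsilon_n\) and~\(\multprocess(n)(0)\coloneqq 1+\delta_n\) with rationals~\(\epsilon_n,\delta_n\) chosen of the same sign and small enough (in terms of~\(p\), and small enough that the running product stays positive and the whole thing remains bounded — though boundedness is not actually required of elements of~\(\mathscr{F}_\random\), only positivity and the~\(\process(\init)=1\) normalisation), so that~\(\ex_p(\multprocess(n)-1)=p\epsilon_n+(1-p)\delta_n>0\). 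Since~\(p\in\sqgroup{0,1}\) this is trivially arrangeable with positive rationals; for instance~\(\epsilon_n=\delta_n=1\) gives~\(\ex_p(\multprocess(n)-1)=1>0\), so even~\(\multprocess(n)\equiv 2\) works and one need not fuss over smallness at all. Then set~\(\process\coloneqq\mint\), the test process generated by~\(\multprocess\): it satisfies~\(\process(\init)=1\), it is positive and rational-valued, and it is recursive because~\(\selection\) is recursive and the multipliers are a fixed finite menu of rationals selected by~\(\selection(n)\). Hence~\(\process\in\mathscr{F}_\s=\mathscr{F}_\co\), and by the nesting also~\(\process\in\mathscr{F}_\wml\) and~\(\process\in\mathscr{F}_\ml\).

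**Verifying~\(\selection^p_\process=\selection\).** It remains to compute~\(\ex_p(\adddelta\process(\sit))\) for~\(\abs{\sit}=n\). Since~\(\process(\sit x)=\process(\sit)\,\multprocess(n)(x)\), we get~\(\adddelta\process(\sit)(x)=\process(\sit)\group{\multprocess(n)(x)-1}\), so~\(\ex_p(\adddelta\process(\sit))=\process(\sit)\,\ex_p(\multprocess(n)-1)\). Because~\(\process\) is positive, the sign of this equals the sign of~\(\ex_p(\multprocess(n)-1)\): it is strictly positive exactly when~\(\selection(n)=1\) and equals~\(0\) (hence~\(\le0\)) when~\(\selection(n)=0\) — and crucially this holds \emph{uniformly over all~\(\sit\) of length~\(n\)}, which is what Equation~\eqref{eq:selection:def} demands. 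Therefore~\(\selection^p_\process(n)=\selection(n)\) for every~\(n\), i.e.~\(\selection^p_\process=\selection\), and by Equation~\eqref{eq:selection:sets} this places~\(\selection\in\selections^{p,q}_{\mathscr{F}_\random}\). Since~\(\selection\) was an arbitrary recursive temporal selection process, this completes the proof.

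**Where the care is needed.** The computation itself is routine; the one thing I would double-check is that the object I build really qualifies as a member of each~\(\mathscr{F}_\random\) — in particular that~\(\mathscr{F}_\s=\mathscr{F}_\co\) consists of \emph{recursive positive rational test processes} (which it does, per Section~\ref{sec:comp}), so that positivity of the running product is genuinely guaranteed by my multiplier choice and recursiveness follows from recursiveness of~\(\selection\). A secondary subtlety is that Equation~\eqref{eq:selection:def} quantifies over \emph{all} situations of a given length, so the multiplier must be chosen temporally (depending only on~\(n\)); this is why I route the construction through a temporal multiplier process rather than an arbitrary one. Given those two points, there is no real obstacle — the statement is essentially an encoding exercise, and the main work was already done in setting up the right class~\(\mathscr{F}_\s\subseteq\mathscr{F}_\random\) to target.
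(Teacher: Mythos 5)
Your proof is correct and takes essentially the same approach as the paper: both construct a single temporal recursive positive rational test process in \(\mathscr{F}_\s\) whose expected increment under \(\ex_p\) is strictly positive exactly at the selected times, and then invoke the nesting \(\mathscr{F}_\s=\mathscr{F}_\co\subseteq\mathscr{F}_\wml\subseteq\mathscr{F}_\ml\). The only (immaterial) difference is that you encode the selection multiplicatively via \(\process(n)=2^{\sum_{k=0}^{n-1}\selection(k)}\), whereas the paper uses the additive variant \(\process(n)=1+\sum_{k=0}^{n-1}\selection(k)\).
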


\begin{proof}
% Checked by Gert
Fix any recursive temporal selection process~\(\selection\) and consider the temporal rational test process~\(\process\) defined by~\(\process(n)\coloneqq1+\sum_{k=0}^{n-1}\selection(k)\) for all~\(n\in\naturalswithzero\).
Since~\(\selection\) is recursive and non-negative, it readily follows that the rational test process~\(\process\) is recursive and positive, and therefore~\(\process\in\mathscr{F}_\s\).
Since~\(\mathscr{F}_\s=\mathscr{F}_\co\subseteq\mathscr{F}_\wml\subseteq\mathscr{F}_\ml\), this implies that \(\process\in\mathscr{F}_\random\), and therefore, that \(\selections_{\mathscr{F}_\random}^{p,q}\) contains the temporal selection process~\(\selection^p_\process\).

We conclude the argument by showing that \(\selection^p_\process=\selection\).
To do so, fix any~\(n\in\naturalswithzero\).
If \(\selection(n)=1\), then for all~\(\sit\in\sits\) and~\(x\in\set{0,1}\) with~\(\abs{\sit}=n\), we have that~\(\adddelta\process(\sit)(x)=\process(n+1)-\process(n)=\selection(n)=1\), and hence, by \ref{axiom:coherence:bounds}, \(\ex_p(\adddelta\process(\sit))=1>0\).
Consequently, by Equation~\eqref{eq:selection:def}, \(\selection^p_\process(n)=1=\selection(n)\).
Otherwise, if \(\selection(n)=0\), then for all~\(\sit\in\sits\) and~\(x\in\set{0,1}\) with~\(\abs{\sit}=n\), we have that~\(\adddelta\process(\sit)(x)=\process(n+1)-\process(n)=\selection(n)=0\), and hence, by \ref{axiom:coherence:bounds}, \(\ex_p(\adddelta\process(\sit))=0\). As a consequence, by Equation~\eqref{eq:selection:def}, \(\selection^p_\process(n)=0=\selection(n)\).
\end{proof}

Let us now move on to our main result, where we use the special countable sets of selection processes~\(\selections^{p,q}_\mathscr{\process}\) and the special forecasting systems \(\frcstsystem^\varpth_{p,q}\) to reveal a surprisingly close connection between non-stationary precise forecasting systems and interval forecasts.

\begin{theorem}\label{theorem:whatsthedifference}
Consider any~\(\random\in\set{\ml,\wml,\co,\s}\), any two real numbers~\(p,q\in\sqgroup{0,1}\) such that \(p<q\), any interval forecast~\(I\subseteq\group{0,1}\), any countable set of selection processes~\(\selections\supseteq\selections^{p,q}_{\mathscr{F}_\random}\), and any path~\(\varpth\in\pths\) that is \(\selections\)-random for~\(I\).
Then a path~\(\pth\in\pths\) is \random-random for~\(\frcstsystempq\) if and only if it is \random-random for~\(\intervalpq\).
\end{theorem}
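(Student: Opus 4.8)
The statement is an equivalence, so I would prove the two implications separately. The direction ``\(\random\)-random for \(\frcstsystempq\) \(\Rightarrow\) \(\random\)-random for \(\intervalpq\)'' is immediate: since \(\frcstsystempq(n)\in\set{p,q}\subseteq\intervalpq\) for every \(n\in\naturalswithzero\), we have \(\frcstsystempq\subseteq\intervalpq\) as forecasting systems, and Proposition~\ref{prop:nestedfrcstsystems} does the rest. For the converse I would argue by contraposition: assuming \(\pth\) is not \random-random for \(\frcstsystempq\), I will build a betting strategy in \(\randomtests{\intervalpq}\) that is unbounded — computably unbounded, in the \(\s\)-case — on \(\pth\). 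Fix a witnessing \(\test\in\randomtests{\frcstsystempq}\). Since \(\test\in\mathscr{F}_\random\), the temporal selection processes \(\selection^p_\test\) and \(\selection^q_\test\) of Equation~\eqref{eq:selection:def} belong to \(\selections_{\mathscr{F}_\random}^{p,q}\), and hence to \(\selections\).

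The key step is this. Fix \(n\in\naturalswithzero\). Because \(\test\) is a supermartingale for \(\frcstsystempq\) and \(\frcstsystempq(n)\) is the single value \(p\) if \(\varpthatnplus=0\) and \(q\) if \(\varpthatnplus=1\), we have \(\ex_{\frcstsystempq(n)}(\adddelta\test(\sit))\leq 0\) for \emph{every} \(\sit\in\sits\) with \(\abs{\sit}=n\). By Equation~\eqref{eq:selection:def} and \(p<q\), this forces \(\selection^q_\test(n)=1\) to imply \(\varpthatnplus=0\), and \(\selection^p_\test(n)=1\) to imply \(\varpthatnplus=1\): so the subsequence of \(\varpth\) selected by \(\selection^q_\test\) contains only zeros, and the one selected by \(\selection^p_\test\) only ones. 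Since \(\varpth\) is \(\selections\)-random for \(I\subseteq\group{0,1}\), so \(0<\min I\) and \(\max I<1\), Definition~\ref{def:Waldrandom} then excludes \(\lim_{n\to\infty}\sum_{k=0}^{n-1}\selection^q_\test(k)=\infty\) (the relevant relative frequency would be identically \(0<\min I\)) and likewise \(\lim_{n\to\infty}\sum_{k=0}^{n-1}\selection^p_\test(k)=\infty\) (it would be identically \(1>\max I\)). Hence there is some \(N\in\naturalswithzero\) with \(\selection^p_\test(n)=\selection^q_\test(n)=0\) for all \(n\geq N\), which by Equation~\eqref{eq:selection:def} says exactly that \(\ex_p(\adddelta\test(\sit))\leq 0\) and \(\ex_q(\adddelta\test(\sit))\leq 0\), that is, \(\uex_{\intervalpq}(\adddelta\test(\sit))\leq 0\), for all \(\sit\in\sits\) with \(\abs{\sit}\geq N\); in other words \(\test\) is `eventually' a supermartingale for \(\intervalpq\).

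It remains to turn \(\test\) into an element of \(\randomtests{\intervalpq}\). Since \(\test\) is unbounded on \(\pth\) it is positive at infinitely many prefixes of \(\pth\), so we may enlarge \(N\) so that moreover \(\test(\pth_{1:N})>0\). I would then \emph{pad} \(\test\): let \(\test'\) equal \(1\) on every situation of length at most \(N\) and, on every situation \(\sit\) properly extending a length-\(N\) situation \(t\), let it reproduce the relative bets of \(\test\) above \(t\) — so \(\test'(\sit)=\test(\sit)/\test(t)\) when \(\test(t)>0\), and \(\test'\) stays constant above \(t\) when \(\test(t)=0\). Because the increments of \(\test\) at situations of length \(\geq N\) satisfy \(\uex_{\intervalpq}(\adddelta\test(\sit))\leq 0\), the process \(\test'\) is a test supermartingale for \(\intervalpq\); because \(\test(\pthton)\) is unbounded (computably unbounded) on \(\pth\) while \(\test(\pth_{1:N})>0\), so is \(\test'(\pthton)\); and because \(N\) together with the finite list of length-\(N\) situations where \(\test\) vanishes are fixed data — which may be hard-wired into an algorithm even though we cannot compute them — \(\test'\) can be taken of the same computability type as \(\test\): one rescales by the rational \(\test(t)\) when \(\random\in\set{\co,\s}\), copies the lower semicomputable multiplier process that generates \(\test\) when \(\random=\wml\), and proceeds with a little more care when \(\random=\ml\). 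Then \(\test'\in\randomtests{\intervalpq}\) witnesses that \(\pth\) is not \random-random for \(\intervalpq\).

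The conceptual core is the key step: it is precisely the \(\selections\)-randomness of \(\varpth\) for an interval bounded away from \(0\) and \(1\) that stops \(\test\) — or any betting strategy allowed by \(\frcstsystempq\) — from telling \(\frcstsystempq\) apart from the stationary forecast \(\intervalpq\) beyond some finite horizon. The main technical obstacle is making the padding step preserve the computability type; the delicate point is the \(\ml\)-case, where \(\test\) is merely lower semicomputable and one must rescale a tail of \(\test\) by the value \(\test(t)\), which is lower semicomputable but need not itself be computable.
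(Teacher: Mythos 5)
Your overall strategy coincides with the paper's: the forward direction via Proposition~\ref{prop:nestedfrcstsystems}, and for the converse the same key observation that, because \(\test\) is a supermartingale for \(\frcstsystempq\), the selection processes \(\selection^p_\test\) and \(\selection^q_\test\) can only select positions where \(\varpi\) equals \(1\) and \(0\) respectively, so that the \(\selections\)-randomness of \(\varpi\) for \(I\subseteq\group{0,1}\) forces both to select only finitely often; beyond some level \(N\), the process \(\test\) is therefore a supermartingale for \(\intervalpq\). Up to that point your argument is correct and essentially identical to the paper's (the paper phrases it directly rather than contrapositively, which is immaterial).

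The gap is in the normalisation step, and you have in fact put your finger on it without closing it. Renormalising each subtree above a length-\(N\) situation \(t\) by the exact value \(\test(t)\) does not preserve lower semicomputability: \(\test(t)\) is only lower semicomputable, so to obtain increasing rational lower bounds on \(\test(\sit)/\test(t)\) you would need decreasing rational upper bounds on \(\test(t)\), which you do not have. Hard-wiring helps with \(N\) and with the finite list of \(t\) where \(\test(t)=0\), but not with the real number \(\test(t)\) itself, which is an infinite object. So the \(\ml\) case is genuinely unresolved as written (\(\co\) and \(\s\) are fine since \(\test(t)\) is then a positive rational, and \(\wml\) is fine because the multiplier process above \(t\) is unchanged). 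The paper's fix is to abandon exact normalisation: pick a single natural number \(K\) with \(K>\test(t)\) for all of the finitely many \(t\) with \(\abs{t}=N+1\), and set \(\tilde\test\) equal to \(1\) up to level \(N\) and to \(\frac{1}{K}\test\) beyond. The supermartingale inequality at level \(N\) then follows from \(\frac{1}{K}\test(\sit\,\bullet)-1\leq1-1=0\) and monotonicity rather than from the supermartingale property of \(\test\) there, and division by a hard-wired integer preserves all four implementability types (Lemmas~\ref{lem:lowersemi} and~\ref{lem:lowersemiprod}). Replacing your per-subtree rescaling by this uniform integer rescaling repairs the \(\ml\) case and completes your proof; the unboundedness transfer at the end then works exactly as you describe, with the constant \(K\) in place of \(\test(\pthto{N})\).
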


In our proof for this result, we make use of the following two lemmas to prove the implementability of a number of real processes.

\begin{lemma}\label{lem:lowersemi}
Consider any {\lscomp} real process~\(\process\) and any two natural numbers~\(N,K\in\naturals\).
Then the real process~\(\tilde{\process}\), defined by
\begin{equation*}
\tilde{\process}(\sit)\coloneqq\begin{cases}
1&\text{if }\abs{\sit}\leq N\\
\frac{1}{K}\process(\sit)&\text{if }\abs{\sit}>N
\end{cases}\text{ for all }\sit\in\sits,
\end{equation*}
is {\lscomp} as well.
\end{lemma}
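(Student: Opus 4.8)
The plan is to take a recursive rational witness for the lower semicomputability of~\(\process\) and reshape it into one for~\(\tilde{\process}\) by a case split that mirrors the definition of~\(\tilde{\process}\). Concretely, since~\(\process\) is {\lscomp}, I would fix a recursive rational map~\(q\colon\sits\times\naturalswithzero\to\rationals\) with \(q(\sit,n)\leq q(\sit,n+1)\) and \(\process(\sit)=\lim_{m\to\infty}q(\sit,m)\) for all~\(\sit\in\sits\) and~\(n\in\naturalswithzero\), and then define
\[
\tilde{q}(\sit,n)\coloneqq\begin{cases}1&\text{if }\abs{\sit}\leq N\\[2pt]\frac{1}{K}q(\sit,n)&\text{if }\abs{\sit}>N\end{cases}\qquad\text{for all }\sit\in\sits\text{ and }n\in\naturalswithzero,
\]
claiming that~\(\tilde{q}\) witnesses the {\lscompy} of~\(\tilde{\process}\).

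The verification then splits into three routine checks. First, \(\tilde{q}\) is a recursive rational map: the length~\(\abs{\sit}\) can be read off from any reasonable encoding of~\(\sit\), so the test~\(\abs{\sit}\leq N\) is decidable; in the first branch the output is the fixed rational~\(1\), and in the second branch it is~\(\frac{1}{K}q(\sit,n)\), which is rational because~\(K\in\naturals\) (so~\(K\neq0\)) and recursive because~\(q\) is recursive and~\(K\) is a fixed natural number. Second, \(\tilde{q}(\sit,\cdot)\) is non-decreasing: it is constantly~\(1\) in the first branch, and in the second branch \(q(\sit,n)\leq q(\sit,n+1)\) together with~\(\frac{1}{K}>0\) gives~\(\tilde{q}(\sit,n)\leq\tilde{q}(\sit,n+1)\). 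Third, \(\lim_{m\to\infty}\tilde{q}(\sit,m)=\tilde{\process}(\sit)\): both sides equal~\(1\) in the first branch, and in the second branch~\(\lim_{m\to\infty}\frac{1}{K}q(\sit,m)=\frac{1}{K}\process(\sit)=\tilde{\process}(\sit)\).

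I do not expect a genuine obstacle here; the only point that needs a moment's care is that the piecewise definition stays recursive, i.e., that~\(\abs{\sit}\) is computable from~\(\sit\) so that the case distinction is effective—but this is immediate from the encoding conventions fixed in Section~\ref{sec:comp}. Everything else is bookkeeping, and the same argument template (extract a witness, transform it branchwise, re-check the three defining clauses) will presumably be reused for the companion implementability lemma that follows.
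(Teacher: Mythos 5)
Your proposal is correct and coincides with the paper's own proof: the same witness \(\tilde{q}\) is defined by the same case split, and the same three checks (recursiveness, monotonicity in \(n\), and pointwise convergence to \(\tilde{\process}\)) are carried out. No issues.
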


\begin{proof}
% Checked by Gert
Since the real process~\(\process\) is {\lscomp}, there is a recursive rational map~\(q\colon\sits\times\naturalswithzero\to\rationals\) such that \(q(\sit,n+1)\geq q(\sit,n)\) and~\(\process(\sit)=\lim_{m\to\infty}q(\sit,m)\) for all~\(\sit\in\sits\) and~\(n\in\naturalswithzero\).
Consider now the recursive rational map~\(\tilde{q}\colon\sits\times\naturalswithzero\to\rationals\) defined by
\begin{equation*}
\tilde{q}(\sit,n)\coloneqq\begin{cases}
1&\text{if }\abs{\sit}\leq N\\
\frac{1}{K}q(\sit,n)&\text{if }\abs{\sit}>N
\end{cases}
\text{ for all }\sit\in\sits\text{ and }n\in\naturalswithzero.
\end{equation*}
Then for all~\(\sit\in\sits\) and~\(n\in\naturalswithzero\),
\begin{equation*}
\tilde{q}(\sit,n+1)
=\begin{cases}
1&\text{if }\abs{\sit}\leq N\\
\frac{1}{K}q(\sit,n+1)&\text{if }\abs{\sit}>N
\end{cases}
\geq
\begin{cases}
1&\text{if }\abs{\sit}\leq N\\
\frac{1}{K}q(\sit,n)&\text{if }\abs{\sit}>N
\end{cases}
=\tilde{q}(\sit,n)
\end{equation*}
and
\begin{equation*}
\lim_{m\to\infty}\tilde{q}(\sit,m)
=
\begin{cases}
1&\text{if }\abs{\sit}\leq N\\
\lim_{m\to\infty}\frac{1}{K}q(\sit,m)&\text{if }\abs{\sit}>N
\end{cases}
=
\begin{cases}
1&\text{if }\abs{\sit}\leq N\\
\frac{1}{K}\process(\sit)&\text{if }\abs{\sit}>N
\end{cases}
=\tilde{\process}(\sit),
\end{equation*}
and therefore, \(\tilde{\process}\) is {\lscomp} as well.
\end{proof}

\begin{lemma}\label{lem:lowersemiprod}
Consider any test process~\(\process\) that is generated by a {\lscomp} multiplier process, and any two natural numbers~\(N,K\in\naturals\).
Then the test process~\(\tilde{\process}\), defined by
\begin{equation*}
\tilde{\process}(\sit)\coloneqq\begin{cases}
1&\text{if }\abs{\sit}\leq N\\
\frac{1}{K}\process(\sit)&\text{if }\abs{\sit}>N
\end{cases}\text{ for all }\sit\in\sits,
\end{equation*}
is generated by a {\lscomp} multiplier process as well.
\end{lemma}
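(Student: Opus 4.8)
The plan is to construct explicitly a {\lscomp} multiplier process that generates $\tilde{\process}$, rather than to try to extract one from $\tilde{\process}$ itself; the latter would fail in general, because the ``local'' multiplier $\sit\mapsto\tilde{\process}(\sit x)/\tilde{\process}(\sit)$ involves a division, and quotients of {\lscomp} processes need not be {\lscomp}. Since $\process$ is generated by some {\lscomp} multiplier process $\multprocess$, we have $\process(\init)=1$ and $\process(\sit x)=\process(\sit)\multprocess(\sit)(x)$, hence $\process(\sit)=\prod_{k=0}^{\abs{\sit}-1}\multprocess(\sitto{k})(\sitat{k+1})$, for all $\sit\in\sits$ and $x\in\set{0,1}$. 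I would then define the gamble process $\tilde{\multprocess}$ by
\begin{equation*}
\tilde{\multprocess}(\sit)(x)\coloneqq\begin{cases}
1&\text{if }\abs{\sit}<N\\
\frac{1}{K}\process(\sit)\multprocess(\sit)(x)&\text{if }\abs{\sit}=N\\
\multprocess(\sit)(x)&\text{if }\abs{\sit}>N
\end{cases}
\qquad\text{for all }\sit\in\sits\text{ and }x\in\set{0,1}.
\end{equation*}
Since $\process,\multprocess\geq0$, this $\tilde{\multprocess}$ is a non-negative gamble process, and therefore a multiplier process.

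The first step is to check that the test process $\mint[\tilde{\multprocess}]$ generated by $\tilde{\multprocess}$ coincides with $\tilde{\process}$, which is a short telescoping computation. For $\abs{\sit}\leq N$, each factor $\tilde{\multprocess}(\sitto{k})(\sitat{k+1})$ with $0\leq k\leq\abs{\sit}-1$ satisfies $\abs{\sitto{k}}=k<N$ and thus equals $1$, so $\mint[\tilde{\multprocess}](\sit)=1=\tilde{\process}(\sit)$. For $\abs{\sit}>N$, the product over $k=0,\dots,\abs{\sit}-1$ splits into the $N$ leading factors equal to $1$, the single factor $\frac{1}{K}\process(\sitto{N})\multprocess(\sitto{N})(\sitat{N+1})$ at $k=N$, and the tail factors $\multprocess(\sitto{k})(\sitat{k+1})$ for $k>N$; substituting $\process(\sitto{N})=\prod_{k=0}^{N-1}\multprocess(\sitto{k})(\sitat{k+1})$ collapses this to $\frac{1}{K}\prod_{k=0}^{\abs{\sit}-1}\multprocess(\sitto{k})(\sitat{k+1})=\frac{1}{K}\process(\sit)=\tilde{\process}(\sit)$.

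The remaining --- and essentially the only nonroutine --- step is to verify that $\tilde{\multprocess}$ is {\lscomp}, that is, to exhibit a recursive rational map $\tilde{q}\colon\sits\times\set{0,1}\times\naturalswithzero\to\rationals$ that is non-decreasing in its last argument and satisfies $\tilde{\multprocess}(\sit)(x)=\lim_m\tilde{q}(\sit,x,m)$. Fix a recursive rational witness $q_{\multprocess}$ for the {\lscompy} of $\multprocess$. The key observation is that a finite product of non-negative {\lscomp} real maps is again {\lscomp}, provided one truncates the rational approximants at zero to preserve monotonicity: for $\abs{\sit}=N$, the map $n\mapsto\prod_{k=0}^{N-1}\max\set{q_{\multprocess}(\sitto{k},\sitat{k+1},n),0}$ is recursive, rational, non-decreasing (each factor being non-negative and non-decreasing), and converges to $\prod_{k=0}^{N-1}\multprocess(\sitto{k})(\sitat{k+1})=\process(\sit)$. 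Multiplying this by $\frac{1}{K}\max\set{q_{\multprocess}(\sit,x,n),0}$ handles the $\abs{\sit}=N$ branch, $q_{\multprocess}(\sit,x,n)$ itself handles the $\abs{\sit}>N$ branch, and the constant $1$ handles the $\abs{\sit}<N$ branch; since the three cases are separated by the algorithmically computable quantity $\abs{\sit}$, their piecewise combination $\tilde{q}$ is again recursive and rational, non-decreasing in $n$, and has the right limit in every case. Hence $\tilde{\process}=\mint[\tilde{\multprocess}]$ is generated by the {\lscomp} multiplier process $\tilde{\multprocess}$, as required. The only point needing a moment's thought is this truncation at zero; everything else is bookkeeping with the definitions of ``generated by a multiplier process'' and of {\lscompy}.
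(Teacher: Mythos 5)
Your proposal is correct and follows essentially the same route as the paper's own proof: the same multiplier process $\tilde{\multprocess}$ (your $\frac{1}{K}\process(\sit)\multprocess(\sit)(x)$ equals the paper's $\frac{1}{K}\process(\sit x)$) and the same piecewise rational witness with the product $\prod_{k=0}^{N-1}q(\sitto{k},\sitat{k+1},n)$ in the $\abs{\sit}=N$ branch; the paper handles your truncation-at-zero point by assuming without loss of generality that the witness $q$ for $\multprocess$ is non-negative, replacing it by $\max\set{q,0}$ up front.
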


\begin{proof}
% Checked by Gert
Let~\(\multprocess\) be the {\lscomp} multiplier process that generates~\(\process\), meaning that~\(\process=\mint\).

Since~\(\multprocess\) is {\lscomp}, there is a recursive rational map~\(q\colon\sits\times\set{0,1}\times\naturalswithzero\to\rationals\) such that \(q(\sit,x,n+1)\geq q(\sit,x,n)\) and~\(\multprocess(\sit)(x)=\lim_{m\to\infty}q(\sit,x,m)\) for all~\(\sit\in\sits\), \(x\in\set{0,1}\) and~\(n\in\naturalswithzero\).
Since~\(\multprocess\) is a multiplier process, it is non-negative, and hence, we can safely assume that the recursive rational map~\(q\) is non-negative as well; otherwise, we just replace it by the recursive rational map~\(\max\set{q,0}\).
Moreover, since \(\process\) is generated by the multiplier process~\(\multprocess\), it readily follows that \(\tilde{\process}\) is generated by the multiplier process~\(\tilde{\multprocess}\) defined by
\begin{equation*}
\tilde{\multprocess}(\sit)(x)
\coloneqq
\begin{cases}
1&\text{if }\abs{\sit}<N\\
\frac{1}{K}\process(\sit x)&\text{if }\abs{\sit}=N\\
\multprocess(\sit)(x)&\text{if }\abs{\sit}>N
\end{cases}
\text{ for all } \sit\in\sits\text{ and } x\in\set{0,1}.
\end{equation*}
So it suffices to prove that~\(\tilde{\multprocess}\) is {\lscomp}.
To that end, consider the recursive rational map~\(\tilde{q}\colon\sits\times\set{0,1}\times\naturalswithzero\to\rationals\) defined by
\begin{equation*}
\tilde{q}(\sit,x,n)\coloneqq\begin{cases}
1&\text{if }\abs{\sit}<N\\
\frac{1}{K}\group[\Big]{\prod_{k=0}^{N-1}q(\xvaltok,\xvalatkplus,n)}q(\sit,x,n)&\text{if }\abs{\sit}=N\\
q(\sit,x,n)&\text{if }\abs{\sit}>N
\end{cases}
\text{ for all \(\sit=\xvaltom\in\sits\), \(x\in\set{0,1}\) and \(n\in\naturalswithzero\)}.
\end{equation*}
Then for all~\(\sit=\xvaltom\in\sits\), \(x\in\set{0,1}\) and~\(n\in\naturalswithzero\),
\begin{align*}
\tilde{q}(\sit,x,n+1)
&=\begin{cases}
1&\text{if }\abs{\sit}<N\\
\frac{1}{K}\group[\Big]{\prod_{k=0}^{N-1}q(\xvaltok,\xvalatkplus,n+1)}q(\sit,x,n+1)&\text{if }\abs{\sit}=N\\
q(\sit,x,n+1)&\text{if }\abs{\sit}>N
\end{cases}\\
&\geq
\begin{cases}
\mathrlap{1}\hphantom{\frac{1}{K}\prod_{k=0}^{N-1}q(\xvaltok,\xvalatkplus,n+1)q(\sit,x,n+1)}&\text{if }\abs{\sit}<N\\
\frac{1}{K}\group[\Big]{\prod_{k=0}^{N-1}q(\xvaltok,\xvalatkplus,n)}q(\sit,x,n)&\text{if }\abs{\sit}=N\\
q(\sit,x,n)&\text{if }\abs{\sit}>N
\end{cases}\\
&=
\tilde{q}(\sit,x,n),
\end{align*}
where the inequality holds because~\(K>0\) and~\(q\geq0\), and
\begin{align*}
\lim_{m\to\infty}\tilde{q}(\sit,x,m)
&=\begin{cases}
\mathrlap{1}\hphantom{\frac{1}{K}\prod_{k=0}^{N-1}q(\xvaltok,\xvalatkplus,n+1)q(\sit,x,n+1)}&\text{if }\abs{\sit}<N\\
\frac{1}{K}\lim_{m\to\infty}\group[\Big]{\prod_{k=0}^{N-1}q(\xvaltok,\xvalatkplus,m)}q(\sit,x,m)&\text{if }\abs{\sit}=N\\
\lim_{m\to\infty}q(\sit,x,m)&\text{if }\abs{\sit}>N
\end{cases}\\
&=\begin{cases}
\mathrlap{1}\hphantom{\frac{1}{K}\prod_{k=0}^{N-1}q(\xvaltok,\xvalatkplus,n+1)q(\sit,x,n+1)}&\text{if }\abs{\sit}<N\\
\frac{1}{K}\group[\Big]{\prod_{k=0}^{N-1}\multprocess(\xvaltok)(\xvalatkplus)}\multprocess(\sit)(x)&\text{if }\abs{\sit}=N\\
\multprocess(\sit)(x)&\text{if }\abs{\sit}>N
\end{cases}\\
&=\begin{cases}
1&\text{if }\abs{\sit}<N\\
\frac{1}{K}\process(\sit x)&\text{if }\abs{\sit}=N\\
\multprocess(\sit)(x)&\text{if }\abs{\sit}>N
\end{cases}\\
&=\tilde{\multprocess}(\sit)(x),
\end{align*}
so we see that~\(\tilde{\multprocess}\) is {\lscomp}, as needed.
\end{proof}

\begin{proof}[Proof of Theorem~\ref{theorem:whatsthedifference}]
% Checked and simplified by Gert
We begin with the direct implication.
Assume that \(\pth\in\pths\) is \random-random for~\(\frcstsystempq\).
Since \(\frcstsystempq(n)\subseteq\intervalpq\) for all~\(n\in\naturalswithzero\), it follows from Proposition~\ref{prop:nestedfrcstsystems} that \(\pth\) is also \random-random for~\(\intervalpq\).

To prove the converse implication, assume that \(\pth\in\pths\) is \random-random for~\(\intervalpq\).
Taking into account Definitions~\ref{def:notionsofrandomness} and~\ref{def:schnorr}, in order to prove that \(\pth\) is \random-random for~\(\frcstsystempq\), we consider any test supermartingale~\(\test\in\smash{\testsupermartinsrandomopen{\frcstsystempq}}\) and prove that it isn't unbounded on~\(\pth\) when \(\random\in\set{\ml,\wml,\co}\), and that it isn't computably unbounded on~\(\pth\) when \(\random=\s\).

To this end, consider the two temporal selection processes~\(\selection_\test^p\) and~\(\selection_\test^q\) as defined by Equation~\eqref{eq:selection:def}.
We will take a closer look at the temporal selection process~\(\selection_\test^p\) and prove that there is only a finite number of non-negative integers~\(n\in\naturalswithzero\) for which \(\selection_\test^p(n)=1\).
To this end, assume \emph{ex absurdo} that there is an infinite number of them, and therefore that \smash{\(\lim_{n\to\infty}\sum_{k=0}^{n-1}\selection_\test^p(k)=\infty\)}.
Consider any~\(k\in\naturalswithzero\) such that \(\selection_\test^p(k)=1\), then it follows from Equation~\eqref{eq:selection:def} that there is some~\(\sit\in\sits\) with~\(\abs{\sit}=k\) such that \(\ex_p(\adddelta\test(\sit))>0\).
Since \(\ex_{\frcstsystempq(\sit)}(\adddelta\test(\sit))\leq0\) (because \(\adddelta\test\) is a supermartingale for \smash{\(\frcstsystempq\)}), this implies that necessarily \(\frcstsystempq(\sit)=q\) and therefore, since \(\abs{\sit}=k\), we infer from Equation~\eqref{eq:special:forecast:system:def} that \smash{\(\varpthatkplus=1\)}.
Since this is true for every~\(k\in\naturalswithzero\) such that~\(\selection_\test^p(k)=1\), it follows that
\begin{equation}\label{eq:helpiewelpie}
\limsup_{n\to\infty} \frac{\sum_{k=0}^{n-1}\selection_\test^p(k)\varpthatkplus}{\sum_{k=0}^{n-1}\selection_\test^p(k)}
=\limsup_{n\to\infty} \frac{\sum_{k=0}^{n-1}\selection_\test^p(k)}{\sum_{k=0}^{n-1}\selection_\test^p(k)}
=1.
\end{equation}
Since \(\test\in\smash{\randomtests{\frcstsystempq}}\), and therefore also \(\test\in\mathscr{F}_\random\), we can infer from Equation~\eqref{eq:selection:sets} that \(\selection_\test^p\in\selections^{p,q}_{\mathscr{F}_\random}\subseteq\selections\).
Consequently, since \(\varpth\) is \(\selections\)-random for~\(I\) by assumption and since \smash{\(\lim_{n\to\infty} \sum_{k=0}^{n-1}\selection_\test^p(k)=\infty\)}, we find [see Definition~\ref{def:Waldrandom}] that
\begin{equation*}
\max I\geq \limsup_{n\to\infty} \frac{\sum_{k=0}^{n-1}\selection_\test^p(k)\varpth_{k+1}}{\sum_{k=0}^{n-1}\selection_\test^p(k)}\overset{\eqref{eq:helpiewelpie}}{=}1,
\end{equation*}
contradicting the assumption that \(I\subseteq\group{0,1}\).
We conclude that, indeed, there is only a finite number of non-negative integers~\(n\in\naturalswithzero\) for which \(\selection_\test^p(n)=1\).

In a completely similar manner, it can be shown that there is only a finite number of non-negative integers~\(n\in\naturalswithzero\) for which \(\selection_\test^q(n)=1\).
Indeed, assume \emph{ex absurdo} that there is an infinite number of them.
Then, by adopting a similar argument, it follows that \smash{\(\lim_{n\to\infty}\sum_{k=0}^{n-1}\selection_\test^q(k)=\infty\)}, that \smash{\(\selection_\test^q\in\selections^{p,q}_{\mathscr{F}_\random}\subseteq\selections\)}, and that for all~\(k\in\naturalswithzero\), if \(\selection_\test^q(k)=1\), then \(\varpthatkplus=0\).
That being so, it follows from Definition~\ref{def:Waldrandom}, since \(\varpth\) is \(\selections\)-random for~\(I\) by assumption, that
\begin{equation*}
\min I\leq\liminf_{n\to\infty} \frac{\sum_{k=0}^{n-1}\selection_\test^q(k)\varpth_{k+1}}{\sum_{k=0}^{n-1}\selection_\test^q(k)}=0,
\end{equation*}
contradicting the assumption that \(I\subseteq\group{0,1}\).

Since there are only a finite number of non-negative integers~\(n\in\naturalswithzero\) for which \(\selection_\test^p(n)=1\) or \(\selection_\test^q(n)=1\), and since for each such~\(n\), there is only a finite number of situations~\(\sit\in\sits\) such that~\(\abs{\sit}=n\), it follows from Equation~\eqref{eq:selection:def} that there are only a finite number of situations~\(\sit\in\sits\) for which \(\ex_p(\adddelta\test(\sit))>0\) or \(\ex_q(\adddelta\test(\sit))>0\).
Hence, there is some~\(N\in\naturals\) such that \(\uex_{\intervalpq}(\adddelta\test(\sit))=\max\set{\ex_p(\adddelta\test(\sit)),\ex_q(\adddelta\test(\sit))}\leq 0\) for all~\(\sit\in\sits\) such that \(\abs{\sit}>N\).

Let \(K\in\naturals\) be any positive natural number such that \(K>\test(\sit)\) for all~\(\sit\in\sits\) with~\(\abs{\sit}=N+1\), and consider the test process~\(\tilde{\test}\colon\sits\to\reals\) defined by
\begin{align*}
\tilde{\test}(\sit)\coloneqq\begin{cases}
1&\text{if }\abs{\sit}\leq N\\
\frac{1}{K}\test(\sit)&\text{if }\abs{\sit}>N
\end{cases}
\text{ for all }\sit\in\sits.
\end{align*}
We intend to prove that~\(\tilde{\test}\in\randomtests{\sqgroup{p,q}}\). We will do so by consecutively showing that it is a supermartingale for~\(\sqgroup{p,q}\), that it is positive if~\(\random\in\set{\co,\s}\), that it is implementable in the same way as~\(\test\) is, and that~\(\tilde{\test}(\init)=1\).

To prove that~\(\tilde{\test}\) is a supermartingale for~\(\sqgroup{p,q}\), we fix some~\(\sit\in\sits\), and consider three mutually exclusive possibilities: \(\abs{\sit}<N\), \(\abs{\sit}=N\) and \(\abs{\sit}>N\).
If \(\abs{\sit}<N\), then
\begin{equation*}
\uex_{\intervalpq}\group[\big]{\adddelta\tilde{\test}(\sit)}
=\uex_{\intervalpq}(0)
\overset{\text{\ref{axiom:coherence:bounds}}}{=}0.
\end{equation*}
If \(\abs{\sit}=N\), then
\begin{equation*}
\uex_{\intervalpq}(\adddelta\tilde{\test}(\sit))
=\uex_{\intervalpq}\group[\big]{\tilde{\test}(\sit\,\bullet)-\tilde{\test}(\sit)}
=\uex_{\intervalpq}\group[\Big]{\frac{1}{K}\test(\sit\,\bullet)-1}
\overset{\text{\ref{axiom:coherence:increasingness}}}{\leq}\uex_{\intervalpq}(1-1)
\overset{\text{\ref{axiom:coherence:bounds}}}{=}0,
\end{equation*}
where the inequality holds because \(K>\test(t)\geq0\) for all~\(t\in\sits\) with~\(\abs{t}=N+1\).
Finally, if \(\abs{\sit}>N\), then
\begin{equation*}
\uex_{\intervalpq}(\adddelta\tilde{\test}(\sit))
=\uex_{\intervalpq}\group[\Big]{\frac1K\adddelta\test(\sit)}
\overset{\text{\ref{axiom:coherence:homogeneity}}}{=}\frac1K\uex_{\intervalpq}(\adddelta\test(\sit))
\leq0,
\end{equation*}
where in the second equality and final inequality, we also used the fact that \(K>0\).

Observe that \(\tilde{\test}\) is positive if~\(\random\in\set{\co,\s}\), because then~\(\test\in\smash{\randomtests{\frcstsystempq}}\subseteq\mathscr{F}_\random\) is positive and because~\(K>0\).

Let us now prove that \(\tilde{\test}\) is implementable in the same way as \(\test\) is.
If~\(\random=\ml\), then \(\test\in\smash{\mltests{\frcstsystempq}}\subseteq\mathscr{F}_\ml\) is {\lscomp}, so it follows from Lemma~\ref{lem:lowersemi} that \(\tilde{\test}\) is {\lscomp} as well.
If~\(\random=\wml\), then \(\test\in\smash{\weakmltests{\frcstsystempq}}\subseteq\mathscr{F}_\wml\) is generated by a {\lscomp} multiplier process, so it follows from Lemma~\ref{lem:lowersemiprod} that \(\tilde{\test}\) is generated by a {\lscomp} multiplier process as well.
And finally, if~\(\random=\co\) or~\(\random=\s\), then \(\test\in\smash{\comptests{\frcstsystempq}}=\smash{\schnorrtests{\frcstsystempq}}\subseteq\mathscr{F}_\co=\mathscr{F}_\s\) is a rational and recursive process, and it is therefore obvious that this is true for~\(\tilde{\test}\) as well.

Since also \(\tilde{\test}(\init)=1\), we conclude that \(\tilde{\test}\in\testsupermartinsrandomopen{\intervalpq}\).
We now consider two possibilities.
If \(\random\in\set{\ml,\wml,\co}\), then since \(\pth\) is \random-random for~\(\intervalpq\) by assumption, \(\tilde{\test}\) can't be unbounded on~\(\pth\) by Definition~\ref{def:notionsofrandomness}.
Since also
\begin{align*}
\limsup_{n\to\infty} \tilde{\test}(\pthton)<\infty
\Rightarrow
\limsup_{n\to\infty} \frac{\test(\pthton)}{K}<\infty
\overset{K>0}{\Rightarrow}
\limsup_{n\to\infty} \test(\pthton)<\infty,
\end{align*}
it then follows that \(\test\) does not become unbounded on~\(\pth\).

If \(\random=\s\), then since \(\pth\) is \random-random for~\(\intervalpq\) by assumption, \(\tilde{\test}\) can't be computably unbounded on~\(\pth\) by Definition~\ref{def:schnorr}.
Consider now any real growth function~\(\realordering\) and an associated real growth function~\(\tilde{\realordering}\) defined by~\(\tilde{\realordering}(n)\coloneqq\nicefrac{\realordering(n)}{K}\) for all~\(n\in\naturalswithzero\).
It then holds that
% since \(\pth\) is \s-random for~\(\intervalpq\) by assumption
\begin{equation*}
\limsup_{n\to\infty} [\tilde{\test}(\pthton)-\tilde{\realordering}(n)]<0
\Rightarrow
\limsup_{n\to\infty} \bigg[\frac{\test(\pthton)}{K}-\frac{\realordering(n)}{K}\bigg]<0
\overset{K>0}{\Rightarrow}
\limsup_{n\to\infty} \bigg[\test(\pthto{n})-\realordering(n) \bigg]<0,
\end{equation*}
and hence, since \(\tilde{\test}\) is not computably unbounded on~\(\pth\) for the real growth function~\(\tilde{\realgrowth}\), \(\test\) does not become computably unbounded on~\(\pth\) for~\(\realgrowth\).
Since this holds for any real growth function~\(\realgrowth\), we conclude that \(\test\) does not become computably unbounded on~\(\pth\).
\end{proof}

According to Theorem~\ref{theorem:whatsthedifference}, for every choice of~\(\random\) in~\(\set{\ml,\wml,\co,\s}\), there is some path~\(\varpth\in\pths\) such that the \random-random paths for the interval forecast~\(\intervalpq\) and for the temporal precise forecasting system~\(\frcstsystempq\) coincide.
Interestingly, there is also a single path~\(\varpth\in\pths\) that does this job for all four notions of randomness that we consider here.
Basically, this is true because the weaker the notion of randomness, the weaker the conditions on~\(\varpth\) that are required in Theorem~\ref{theorem:whatsthedifference}, in the sense that the minimally required countable set of selection processes~\(\selections\) becomes smaller.

\begin{corollary}
Consider any two real numbers~\(p,q\in\sqgroup{0,1}\) such that \(p<q\), any interval forecast~\(I\subseteq\group{0,1}\), any countable set of selection processes~\(\selections\supseteq\selections_{\mathscr{F}_\ml}^{p,q}\), and any path~\(\varpth\in\pths\) that is \(\selections\)-random for~\(I\).
Then, for any~\(\random\in\set{\ml,\wml,\co,\s}\), a path~\(\pth\in\pths\) is \random-random for~\(\frcstsystempq\) if and only if it is \random-random for~\(\intervalpq\).
\end{corollary}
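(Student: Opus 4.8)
The plan is to reduce the corollary directly to Theorem~\ref{theorem:whatsthedifference} by exploiting the monotonicity of the map \(\random\mapsto\selections^{p,q}_{\mathscr{F}_\random}\). First I would recall Equation~\eqref{eq:selections:order}, which states that \(\selections^{p,q}_{\mathscr{F}_\s}=\selections^{p,q}_{\mathscr{F}_\co}\subseteq\selections^{p,q}_{\mathscr{F}_\wml}\subseteq\selections^{p,q}_{\mathscr{F}_\ml}\). Hence, for every \(\random\in\set{\ml,\wml,\co,\s}\) we have \(\selections^{p,q}_{\mathscr{F}_\random}\subseteq\selections^{p,q}_{\mathscr{F}_\ml}\subseteq\selections\), so the hypothesis \(\selections\supseteq\selections^{p,q}_{\mathscr{F}_\ml}\) implies that \(\selections\) is also a countable superset of \(\selections^{p,q}_{\mathscr{F}_\random}\) for each of the four notions of randomness at once.

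Next, fix an arbitrary \(\random\in\set{\ml,\wml,\co,\s}\) together with the given path \(\varpth\in\pths\) that is \(\selections\)-random for \(I\). Since \(p<q\), \(I\subseteq\group{0,1}\), \(\selections\) is countable with \(\selections\supseteq\selections^{p,q}_{\mathscr{F}_\random}\), and \(\varpth\) is \(\selections\)-random for \(I\), all hypotheses of Theorem~\ref{theorem:whatsthedifference} are met for this particular \(\random\). Applying that theorem yields that a path \(\pth\in\pths\) is \random-random for \(\frcstsystempq\) if and only if it is \random-random for \(\intervalpq\). Since \(\random\) was arbitrary, and since the \emph{same} path \(\varpth\) was used throughout, this establishes the equivalence simultaneously for all four notions of randomness. (That such a \(\varpth\) exists at all, making the statement non-vacuous, follows from the countability of \(\selections\) via Corollary~\ref{cor:Wald:at:least:one}.)

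There is essentially no obstacle here: the technical heavy lifting has already been carried out in Theorem~\ref{theorem:whatsthedifference}, and the only new ingredient is the observation that the set of selection processes required by that theorem shrinks as the notion of randomness weakens, so that the strongest requirement---the one coming from \(\ml\)-randomness---dominates and a single assumption on \(\varpth\) suffices for all of \(\ml\)-, \(\wml\)-, \(\co\)- and \(\s\)-randomness. The one point worth stating explicitly in the write-up is the chain of inclusions \(\selections^{p,q}_{\mathscr{F}_\random}\subseteq\selections^{p,q}_{\mathscr{F}_\ml}\subseteq\selections\); everything else is an immediate invocation of the main theorem.
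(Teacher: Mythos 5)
Your proposal is correct and matches the paper's own proof, which likewise invokes the inclusion chain \(\selections^{p,q}_{\mathscr{F}_\s}=\selections^{p,q}_{\mathscr{F}_\co}\subseteq\selections^{p,q}_{\mathscr{F}_\wml}\subseteq\selections^{p,q}_{\mathscr{F}_\ml}\) from Equation~\eqref{eq:selections:order} and then applies Theorem~\ref{theorem:whatsthedifference} for each \(\random\). You merely spell out the intermediate step \(\selections^{p,q}_{\mathscr{F}_\random}\subseteq\selections^{p,q}_{\mathscr{F}_\ml}\subseteq\selections\) more explicitly, which is fine.
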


\begin{proof}
% Checked by Gert
Since \(\selections^{p,q}_{\mathscr{F}_\s}=\selections^{p,q}_{\mathscr{F}_\co}\subseteq\selections^{p,q}_{\mathscr{F}_\wml}\subseteq\selections^{p,q}_{\mathscr{F}_\ml}\) by Equation~\eqref{eq:selections:order}, this follows readily from Theorem~\ref{theorem:whatsthedifference}.
\end{proof}

If we restrict our attention to rational numbers~\(p,q\in\rationals\) and to~\(\random\in\set{\co,\s}\), then, as proved in Proposition~\ref{prop:wald:church}, the set~\(\selections_{\mathscr{F}_\random}^{p,q}\) consists of the recursive temporal selection processes, and hence, by comparing Definitions~\ref{def:Waldrandom} and~\ref{def:church:randomness}, the conditions on~\(\varpth\) that are required in Theorem~\ref{theorem:whatsthedifference} translate into~\(\varpth\) being \wch-random for an interval forecast~\(I\subseteq\group{0,1}\).

\begin{corollary}
Consider any~\(\random\in\set{\co,\s}\), any two rational numbers~\(p,q\in\sqgroup{0,1}\) such that \(p<q\), any interval forecast~\(I\subseteq\group{0,1}\), and any path~\(\varpth\in\pths\) that is \wch-random for~\(I\).
Then a path~\(\pth\in\pths\) is \random-random for~\(\frcstsystempq\) if and only if it is \random-random for~\(\intervalpq\).
\end{corollary}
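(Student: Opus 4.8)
The plan is to deduce the corollary from Theorem~\ref{theorem:whatsthedifference} by picking the right countable set of selection processes. First I would invoke Proposition~\ref{prop:wald:church}: since \(p\) and \(q\) are rational and \(\random\in\set{\co,\s}\), the set \(\selections^{p,q}_{\mathscr{F}_\random}\) is exactly the set of all recursive temporal selection processes. Next I would compare Definition~\ref{def:Waldrandom}, specialised to \(\selections=\selections^{p,q}_{\mathscr{F}_\random}\), with Definition~\ref{def:church:randomness}: once one knows that these two collections of selection processes coincide, the two sets of defining conditions are literally identical, so for any forecasting system a path is \(\selections^{p,q}_{\mathscr{F}_\random}\)-random for it if and only if it is \(\wch\)-random for it. In particular, the hypothesis that \(\varpth\) is \(\wch\)-random for \(I\) yields that \(\varpth\) is \(\selections^{p,q}_{\mathscr{F}_\random}\)-random for \(I\).

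With this translation in hand, I would apply Theorem~\ref{theorem:whatsthedifference} with the choice \(\selections\coloneqq\selections^{p,q}_{\mathscr{F}_\random}\), which trivially meets the inclusion requirement \(\selections\supseteq\selections^{p,q}_{\mathscr{F}_\random}\), and with this path \(\varpth\); the theorem's remaining hypotheses---\(p<q\) and \(I\subseteq\group{0,1}\)---are precisely the ones assumed here. The conclusion of Theorem~\ref{theorem:whatsthedifference} is then exactly the claimed equivalence: a path \(\pth\in\pths\) is \random-random for \(\frcstsystempq\) if and only if it is \random-random for \(\intervalpq\).

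There is no genuine obstacle in this argument; all the mathematical content already sits in Theorem~\ref{theorem:whatsthedifference} and Proposition~\ref{prop:wald:church}. The only step that warrants a moment's care is the passage between \(\selections\)-randomness and \(\wch\)-randomness, i.e.\ verifying that the quantification ``over every recursive temporal selection process'' in Definition~\ref{def:church:randomness} matches the quantification ``over every \(\selection\in\selections\)'' in Definition~\ref{def:Waldrandom} when \(\selections\) is taken to be the set of all recursive temporal selection processes---which is immediate from Proposition~\ref{prop:wald:church}---and in confirming that the two limit conditions appearing in the two definitions are written in the same form, so that no additional equivalence argument is needed.
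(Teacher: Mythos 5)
Your argument is correct and matches the paper's intended derivation exactly: the paper states this corollary without a separate proof precisely because, as you note, Proposition~\ref{prop:wald:church} identifies \(\selections^{p,q}_{\mathscr{F}_\random}\) with the set of all recursive temporal selection processes, so that \(\selections^{p,q}_{\mathscr{F}_\random}\)-randomness coincides with \wch-randomness and Theorem~\ref{theorem:whatsthedifference} applies directly. Nothing is missing.
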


\section{Theoretical and practical necessity of interval forecasts in statistics}\label{sec:stat}
Let's now zoom out and move away from the technicalities in the previous sections, in order to better understand the implications of Theorem~\ref{theorem:whatsthedifference} and its Corollary~\ref{cor:textie}.
In trying to come to a better understanding, we have found it useful to look at these results from the point of view of statistics, whose aim it is to learn an uncertainty model from data.
Regarding the data, we will consider a finite sequence~\(\pthton\) and assume that it is an initial segment of an idealised (and unobserved) path~\(\pth\) that is (\ml-, \wml-, \co- or \s-)random; there are clearly a multitude of forecasting systems for which this is the case.
Under this assumption, we will examine what forecasting systems---that make the path~\(\pth\) random---can be learned from the finite initial segment~\(\pthton\).
Notice that, whilst doing so, we have changed our point of view: instead of focusing on the paths that are random for a forecasting system~\(\frcstsystem\in\frcstsystems\), as we have done before, we have a look at the forecasting systems that make a path~\(\pth\in\pths\) random.
Even though it is commonly assumed that the uncertainty model~\(\frcstsystem\) to be estimated or identified from the data~\(\pthton\) is precise, we have put forward elsewhere \cite{CoomanBock2021} a number of arguments that question the assumption that a path's randomness should always be described by a precise forecasting system~\(\frcstsystem\).
So, in the discussion below, we want to remain open about that possibility, and see what can be said if we don't assume {\itshape a priori} that the sequence~\(\pth\) is necessarily random for a precise forecasting system.

\revised{From Proposition~\ref{prop:vacuous},} we know that there is at least one candidate (stationary) interval forecast that makes \(\pth\) random: all paths are random for the unit interval~\(\sqgroup{0,1}\).
In fact, interestingly, there is (almost always) a smallest (stationary) interval forecast~\(\intervalpq\) that makes \(\pth\) random \cite{floris2021ecsqaru}.
Meanwhile, it is not guaranteed that there is a stationary precise forecast~\(p\) that makes \(\pth\) random; the smallest (stationary) interval forecast~\(\intervalpq\) that makes \(\pth\) random needn't be a singleton \cite[Section 9.1]{CoomanBock2021}.
Hence, generally speaking, imprecision is needed if we insist on a stationary uncertainty model to describe a path's randomness.
If we also allow for non-stationary uncertainty models however, then Theorem~\ref{theorem:whatsthedifference} shows that we could replace \(\intervalpq\) by a non-stationary precise forecasting system~\(\frcstsystempq\), with~\(\varpth\) chosen as in Theorem~\ref{theorem:whatsthedifference}.
In fact, there is an even more (theoretically) straightforward way to associate a non-stationary precise forecasting system with a path~\(\pth\): the temporal forecasting system~\(\frcstsystem_{0,1}^\pth\) that assigns probability~\(1\) to the actual next value, and hence, makes a perfect prediction.

\begin{proposition}\label{prop:all}
Consider any~\(\random\in\set{\ml,\wml,\co,\s}\), then any path~\(\pth\in\pths\) is \random-random for the precise forecasting system~\(\frcstsystem_{0,1}^\pth\).
\end{proposition}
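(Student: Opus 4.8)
The plan is to prove the stronger fact that \emph{every} supermartingale for $\frcstsystem_{0,1}^\pth$ is non-increasing along $\pth$, so that in particular every test supermartingale for $\frcstsystem_{0,1}^\pth$ stays bounded above by its initial value $1$ on $\pth$. Since $\frcstsystem_{0,1}^\pth$ is temporal, the forecast it assigns in any situation $\sit\in\sits$ with $\abs{\sit}=k$ is the precise (degenerate) forecast $\pth_{k+1}\in\set{0,1}\subseteq\reals$; in particular $\frcstsystem_{0,1}^\pth(\pth_{1:k})=\pth_{k+1}$. First I would record the elementary observation that, by Equation~\eqref{eq:ex}, a degenerate forecast $p\in\set{0,1}$ satisfies $\ex_p(\gamble)=\gamble(p)$ for every gamble $\gamble\in\gambles$ (indeed $\ex_1(\gamble)=\gamble(1)$ and $\ex_0(\gamble)=\gamble(0)$). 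Applying this to the gamble $\adddelta\supermartin(\pth_{1:k})$, the supermartingale condition at the situation $\pth_{1:k}$ becomes $\adddelta\supermartin(\pth_{1:k})(\pth_{k+1})=\ex_{\pth_{k+1}}(\adddelta\supermartin(\pth_{1:k}))\leq0$, and since $\pth_{1:k}\pth_{k+1}=\pth_{1:k+1}$ this reads $\supermartin(\pth_{1:k+1})-\supermartin(\pth_{1:k})\leq0$.

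Iterating this inequality from $k=0$ yields, for any test supermartingale $\test$ for $\frcstsystem_{0,1}^\pth$, that $\test(\pth_{1:n})\leq\test(\init)=1$ for all $n\in\naturalswithzero$, hence $\limsup_{n\to\infty}\test(\pth_{1:n})\leq1<\infty$. I would then finish by reading off the four randomness notions. For $\random\in\set{\ml,\wml,\co}$: applying the above to every test supermartingale in $\mltests{\frcstsystem_{0,1}^\pth}$, $\weakmltests{\frcstsystem_{0,1}^\pth}$, respectively $\comptests{\frcstsystem_{0,1}^\pth}$, shows that no such test supermartingale is unbounded on $\pth$, so $\pth$ is $\random$-random for $\frcstsystem_{0,1}^\pth$ by Definition~\ref{def:notionsofrandomness}. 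For $\random=\s$: the same bound $\test(\pth_{1:n})\leq1$ also prevents computable unboundedness, since for any real growth function $\realgrowth$ one has $\limsup_{n\to\infty}[\test(\pth_{1:n})-\realgrowth(n)]\leq 1-\lim_{n\to\infty}\realgrowth(n)=-\infty<0$; so no $\test\in\schnorrtests{\frcstsystem_{0,1}^\pth}$ is computably unbounded on $\pth$, and Definition~\ref{def:schnorr} gives $\s$-randomness.

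I do not expect a genuine obstacle here. The only point needing a little care is the bookkeeping forced by $\frcstsystem_{0,1}^\pth$ being a \emph{temporal} forecasting system: one must check that its forecast in the situation $\pth_{1:k}$ is $\pth_{k+1}$, and use the identity $\pth_{1:k}\pth_{k+1}=\pth_{1:k+1}$ so that the degenerate expectation evaluates the process difference precisely at the outcome that actually occurs on $\pth$ — which is exactly what makes the supermartingale non-increasing along $\pth$.
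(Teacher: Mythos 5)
Your proposal is correct and follows essentially the same route as the paper's own proof: the degenerate forecast $\frcstsystem_{0,1}^\pth(\pthton)=\pthatnplus$ turns the supermartingale condition into $\adddelta\test(\pthton)(\pthatnplus)\leq0$, so $\test(\pthton)\leq\test(\init)=1$ along $\pth$, ruling out both unboundedness and computable unboundedness. The only cosmetic difference is that you state the identity $\ex_p(\gamble)=\gamble(p)$ for $p\in\set{0,1}$ once, where the paper spells out the two cases $\pthatnplus=0$ and $\pthatnplus=1$ explicitly.
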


\begin{proof}
Consider any test supermartingale~\(\test\in\testsupermartinsrandomopen{\frcstsystem_{0,1}^\pth}\).
Since \(\test\) is a supermartingale for~\(\frcstsystem_{0,1}^\pth\), it holds for any~\(n\in\naturalswithzero\) that
\begin{align*}
0\geq\ex_{\frcstsystem_{0,1}^\pth(\pthton)}(\adddelta\test(\pthton))
&=\begin{cases}
\ex_0(\adddelta\test(\pthton))&\text{if }\pthatnplus=0\\
\ex_1(\adddelta\test(\pthton))&\text{if }\pthatnplus=1
\end{cases}\\
&=\begin{cases}
\mathrlap{\adddelta\test(\pthton)(0)}\hphantom{\ex_0(\adddelta\test(\pthton))}&\text{if }\pthatnplus=0\\
\adddelta\test(\pthton)(1)&\text{if }\pthatnplus=1
\end{cases}
=\adddelta\test(\pthton)(\pthatnplus),
\end{align*}
and therefore,
\begin{align*}
\test(\pthton)
=\test(\init)+\sum_{k=0}^{n-1}\adddelta\test(\pthtok)(\pthatkplus)
\leq\test(\init)=1.
\end{align*}
Consequently, all test supermartingales~\(\test\in\testsupermartinsrandomopen{\frcstsystem_{0,1}^\pth}\) are bounded above by~\(1\) along~\(\pth\).
It therefore holds [see Definitions~\ref{def:notionsofrandomness} and \ref{def:schnorr}] that \(\pth\) is \random-random for~\(\frcstsystem_{0,1}^\pth\).
\end{proof}
Hence, if \(\pth\) is random for~\(\intervalpq\), then it is also random for at least two non-stationary precise models.
We won't risk getting bogged down into a discussion on what uncertainty models are best associated with a path~\(\pth\); that would require a paper on its own.
But we do want to point out that the uncertainty models that correspond with~\(\pth\) typically do not contain the same information; that is, they do not share the same set of random paths.
Interestingly, however, as we know from Theorem~\ref{theorem:whatsthedifference}, \(\intervalpq\) and~\(\frcstsystempq\) do have the same set of random paths and are, in that sense, equally expressive.
On that ground, theoretically, one might argue that the imprecision in~\(\intervalpq\) is not needed.

We believe that this story changes when moving to more practical grounds.
If we are given an initial finite segment \(\pthton\) of a path~\(\pth\in\pths\) and want to learn a forecasting system~\(\frcstsystem\) for which \(\pth\) is random, we will have to do so by adopting a finite algorithm that, given the data \(\pthton\), outputs a forecasting system~\(\frcstsystem'\) whose set of random paths is then believed to contain~\(\pth\).
A candidate for~\(\frcstsystem'\) could be the forecasting system~\(\frcstsystem_{0,1}^\pth\) that is generated by~\(\pth\) itself.
However, it is unfeasible to learn this forecasting system, or to even approximate it, as it basically requires us to know the entire path~\(\pth\) itself.

Another candidate for~\(\frcstsystem'\) could be the non-{\comp} forecasting system~\(\frcstsystempq\).
Here too, however, it seems impossible to learn or even approximate this model because it requires us to learn the path~\(\varpi\), which is non-recursive by Proposition~\ref{prop:nonrecursive}.
At the same time, learning a stationary interval forecast~\(\intervalpq\)---which is as expressive as~\(\frcstsystempq\)---seems a much less daunting, and practically more feasible, task, especially if \(\sqgroup{p,q}\) is {\comp}.

In summary, it is one thing to associate precise uncertainty models with a path~\(\pth\) that has no precise stationary forecast, but it is another thing to actually learn them.
When it comes to the latter, {\comp} stationary interval forecasts seem more promising than non-{\comp} non-stationary precise ones.

\section{Conclusions and future work}
We conclude that precision and computability are not always compatible when describing a path's randomness.
Indeed, if you require computability, then Theorem~\ref{the:R:inherently:imprecise} shows that you should allow for imprecision as there is at least one path~\(\pth\in\pths\) whose randomness can be described by a {\comp} interval forecast~\(\sqgroup{p,q}\), but not by any {\comp} precise forecasting system~\(\frcstsystem\in\frcstsystems\).
On the other hand, if you require precision, then Theorem~\ref{the:R:inherently:imprecise} and~\ref{theorem:whatsthedifference} show that you should allow for non-computability since the path~\(\pth\) is random for the non-{\comp} precise forecasting system~\(\frcstsystempq\), but not for any {\comp} precise one.
We repeat that the above holds because interval forecasts~\(\intervalpq\) have the same sets of martingale-theoretically random paths as the related non-{\comp} non-stationary precise forecasting systems~\(\frcstsystempq\), while being simpler and stationary.
Moreover, our preliminary analysis suggests that the stationary character of interval forecasts will be of the utmost importance when moving to the field of statistics.
In particular, it seems neither possible nor opportune to try and learn---or even approximate---the non-{\comp} non-stationary precise forecasting systems~\(\frcstsystempq\), which---by definition---cannot be described by a finite algorithm, from a finite initial path segment \(\pthton\).

In our future work, we plan to further explore these preliminary ideas about a randomness-based approach to statistics, and try to develop new statistical methods based on them.

Moreover, we want to explore whether the theorems and ideas for martingale-theoretic randomness in this paper apply equally well to frequentist notions of randomness, like the ones in Section~\ref{sec:freq}.
Our preliminary investigation seems to indicate that, similarly to what Theorem~\ref{the:R:inherently:imprecise} states, there are paths that are Church random for an interval forecast~\(I\in\intervals\), but not for any {\comp} (more) precise forecasting system; it is an open question whether a similar property holds for weak Church randomness.
Meanwhile, and in contrast with Theorem~\ref{theorem:whatsthedifference}, we suspect that if there is a precise forecasting system that has the exact same set of (weak) Church random paths as a non-vanishing interval forecast, then it must be non-temporal, and therefore can't be of the form~\(\frcstsystempq\).

Lastly, we intend to explore whether the proof of Theorem~\ref{theorem:whatsthedifference} can be modified to allow for arbitrary forecasting systems \(\frcstsystem\) rather than only stationary interval forecasts.

\section{Acknowledgements}
\revised{Work on this paper was supported by the Research Foundation – Flanders (FWO), project numbers 11H5521N (for Floris Persiau) and 3G028919 (for Jasper De Bock and Gert de Cooman).}

\bibliographystyle{unsrturl} 
\bibliography{biblio}

\end{document}